\theoremstyle{change}
\newtheorem{Theorem}[subsection]{Theorem}
\newtheorem{Proposition}[subsection]{Proposition}
\newtheorem{Lemma}[subsection]{Lemma}
\newtheorem{Remark}[subsection]{Remark}
\numberwithin{equation}{subsection}
\newenvironment{prf}[1]{\trivlist
\item[\hskip \labelsep{\it
#1\hspace*{.3em}}]}{~\hspace{\fill}~$\square$\endtrivlist}
\newenvironment{proof}{\begin{prf}{\bf Proof}}{\end{prf}}
\newcommand{\ol}{\overline}
\newcommand{\pr}{\mathrm{pr}}
\newcommand{\add}{\mathrm{add}}
\newcommand{\C}{\mathbb C}
\newcommand{\R}{\mathbb R}
\newcommand{\Q}{\mathbb Q}
\newcommand{\Z}{\mathbb Z}
\newcommand{\F}{\mathbb F}
\newcommand{\G}{\mathbb G}
\newcommand{\Gm}{\G_{\mathrm{m}}}
\newcommand{\Ga}{\G_{\mathrm{a}}}
\renewcommand{\H}{\mathbb H}
\newcommand{\Pic}{\mathrm{Pic}}
\newcommand{\calO}{\mathcal{O}}
\newcommand{\calL}{\mathcal{L}}
\newcommand{\calM}{\mathcal{M}}
\newcommand{\calN}{\mathcal{N}}
\newcommand{\calP}{\mathcal{P}}
\newcommand{\divisor}{\mathrm{div}}
\newcommand{\Div}{\mathrm{Div}}
\newcommand{\quot}{\mathrm{quot}}
\newcommand{\End}{\mathrm{End}}
\newcommand{\Isom}{\mathrm{Isom}}
\newcommand{\Norm}{\mathrm{Norm}}
\newcommand{\Ext}{\mathrm{Ext}}
\newcommand{\Hom}{\mathrm{Hom}}
\newcommand{\GL}{\mathrm{GL}}
\newcommand{\GSp}{\mathrm{GSp}}
\newcommand{\Sp}{\mathrm{Sp}}
\newcommand{\id}{\mathrm{id}}
\newcommand{\lto}{\longrightarrow}
\newcommand{\diag}{\mathrm{diag}}
\newcommand{\Gr}{\mathrm{Gr}}
\renewcommand{\Im}{\mathrm{Im}}
\renewcommand{\Re}{\mathrm{Re}}
\newcommand{\rmH}{\mathrm{H}}
\newcommand{\rmM}{\mathrm{M}}
\newcommand{\tors}{\mathrm{tors}}
\newcommand{\transl}{\mathrm{tr}}
\renewcommand{\phi}{\varphi}
\newcommand{\Sh}{\mathrm{Sh}}
\begin{document}

\title{Pink's conjecture on unlikely intersections and families of
  semi-abelian varieties \footnote{AMS Classification: 14K05, 14G35,
    11G15, 14K30, 11G15. Key words: semi-abelian varieties; Poincar\'e
    biextensions; mixed Shimura varieties; Manin-Mumford, Andr\'e-Oort
    and Zilber-Pink conjectures.}}
\author{ Daniel Bertrand  \&  Bas  Edixhoven \\
\small{
  \href{mailto:daniel.bertrand@imj-prg.fr}{daniel.bertrand@imj-prg.fr} \quad
  \href{mailto:edix@math.leidenuniv.nl}{edix@math.leidenuniv.nl}}
}

\maketitle

\begin{abstract}

  The Poincar\'e torsor of a Shimura family of abelian varieties can
  be viewed both as a family of semi-abelian varieties and as a mixed
  Shimura variety. We show that the special subvarieties of the latter
  cannot all be described in terms of the subgroup schemes of the
  former. This provides a counter-example to the relative
  Manin-Mumford conjecture, but also some evidence in favour of Pink's
  conjecture on unlikely intersections in mixed Shimura varieties. The
  main part of the article concerns mixed Hodge structures and the
  uniformisation of the Poincar\'e torsor, but other, more geometric,
  approaches are also discussed.

\medskip
  
  \centerline{\textbf{R\'esum\'e}}
  
  \medskip

  (Sur la conjecture de Pink sur les intersections exceptionnelles et
  les familles de vari\'et\'es semi-ab\'eliennes) Le torseur de
  Poincar\'e d'une famille de Shimura de vari\'et\'es ab\'eliennes
  s'interpr\`ete \`a la fois comme une famille de vari\'et\'es
  semi-ab\'eliennes et comme une vari\'et\'e de Shimura mixte. Nous
  montrons que ses sous-vari\'et\'es sp\'eciales en ce deuxi\`eme sens
  ne peuvent pas toutes se d\'ecrire en termes de sous-sch\'emas en
  groupes. Cela donne un contre-exemple \`a la conjecture de
  Manin-Mumford relative, mais t\'emoigne aussi de la pertinence de la
  conjecture de Pink sur les intersections exceptionnelles dans les
  vari\'et\'es de Shimura mixtes. L'essentiel de l'article porte sur
  les structures de Hodge mixtes, mais d'autres approches, de nature
  plus g\'eom\'etrique, sont aussi abord\'ees.

\end{abstract}

\setcounter{tocdepth}{1}
\tableofcontents

\section{Introduction}
In the unpublished preprint~\cite{Pink2} Pink formulated a very
influential conjecture (the equivalent Conjectures~1.1--1.3) on
so-called ``unlikely intersections'' in mixed Shimura varieties. 
Here we merely recall the statement of his Conjecture~1.3: 
\begin{quote}
  if $Y$ is a Hodge generic irreducible closed subvariety of a mixed
  Shimura variety~$S$, then the union of the intersections of $Y$ with
  the special subvarieties of $S$ of codimension at least $\dim(Y)+1$
  is not Zariski dense in~$Y$.
\end{quote}
We refer to~\cite{Zannier} for more details on such intersections, and
for their relations to the conjectures by Manin--Mumford,
Mordell--Lang (which are now theorems), and Andr\'e--Oort.  See also
\cite{Pink2}, \cite{Pink1}, and~\cite{KUY}.  The Andr\'e--Oort
conjecture was recently proved for all $\mathcal{A}_g$
in~\cite{Tsimerman}.

In the last section of~\cite{Pink2}, Pink states a relative version of
the Manin-Mumford conjecture for families of semi-abelian varieties,
Conjecture~6.1:
\begin{quote}
  if $B\to X$ is a family of semi-abelian varieties over~$\C$ and $Y$
  is an irreducible closed subvariety in $B$ that is not contained in
  any proper closed subgroup scheme of $B\to X$, then the union of the
  intersections of $Y$ with algebraic subgroups of
  codimension at least $\dim(Y)+1$ of the fibres of $B\to X$ is not
  Zariski dense in~$Y$.
\end{quote}
Furthermore, Theorem~6.3 of~\cite{Pink2} claims that Conjecture~1.3
implies Conjecture~6.1. However, a counter-example to Conjecture~6.1
was given in the unpublished preprint~\cite{Bertrand11}, based on a
relative version of a construction of Ribet
(\cite{Jacquinot-Ribet},~\cite{Ribet}), leading to the notion of Ribet
sections on certain semi-abelian schemes. But it was also shown
in~\cite{Bertrand11} that this counter-example was not in
contradiction with Conjecture~1.3, and so, the error was in the proof
of Theorem~6.3 (see
Remark~\ref{rem_r_f_msv}.\ref{rem_r_f_msv_item_4} at the end of
Section~\ref{sec5} below). The conclusion is
that the context of mixed Hodge structures \emph{is} the right one for
a relative Manin-Mumford conjecture for families of
\emph{semi}-abelian varieties: indeed, the image of a Ribet section is
a special subvariety that can in general \emph{not} be interpreted as
a subgroup scheme (see
Remark~\ref{rem_r_f_msv}.\ref{rem_r_f_msv_item_2} below). However, for
families of \emph{abelian} varieties (that is, mixed Shimura varieties
of Kuga type), Theorem~6.3 is correct, see~\cite{Pink1},
Proposition~4.6, \cite{Gao-Pisa}, Proposition~3.4, and again
Remark~\ref{rem_r_f_msv}.\ref{rem_r_f_msv_item_4}.

The aim of this article is to provide not only a published account of
this story, sharpening the results of~\cite{Bertrand11}, but also a
self-contained description of the involved mixed Hodge structures and
the corresponding mixed Shimura varieties, made as accessible as
possible.

The article is structured as follows. In Section~\ref{sec2} we present
the (counter)example, in the case of complex elliptic curves with
complex multiplications, and in Section~\ref{sec3} (which introduces a
different viewpoint) for abelian schemes. In Sections~\ref{sec4} and
\ref{sec5} we give the description of the example in the context of
mixed Shimura varieties whose pure part parametrises principally
polarised abelian varieties.  We show that it gives evidence in favour of
Pink's Conjecture~1.3. Finally, in Section~\ref{sec6} we give a description
of the example, in the case of elliptic curves, in terms of
generalised jacobians.

\begin{Remark}
  In each section, we construct Ribet sections under various
  denominations, namely $t_\phi$ in~\eqref{eq:def_t_phi}, $r_f$ in
  Proposition~\ref{prop:ribsec}, $r^\Sh_f$ in Thm~\ref{thm:RibetsectionMSV},
  and $t^J_\phi$ in~\eqref{eq:def_sJ}.  At each step, we prove their
  compatibility, as well as some of their properties. The main
  property, leading to the searched-for counterexample to
  Conjecture~6.1 of~\cite{Pink2}, is stated in
  Theorem~\ref{thm:ttorsionproperty} and asserts that the Ribet section
  $t_\phi$ maps torsion points of the base to torsion points of their
  fibres.  The proof (with sharper additional properties) is given in
  terms of $r_f$ in Proposition~\ref{prop:ribsectors}, of $r^\Sh_f$ in
  Proposition~\ref{prop_tors_prop_r_f_msv} and of
  $t^J_\phi$ in Theorem~\ref{thm:tJ}.  So, these proofs have logically
  unnecessary overlaps, but their settings are sufficiently distinct
  to justify this presentation. We should mention that yet another
  construction of the Ribet sections was proposed
  in~\cite{Bertrand11}, based as in~\cite{Jacquinot-Ribet} on the
  theory of 1-motives. But as shown in~\cite{Chambert-Loir99}, the latter
  is equivalent to the construction of $t_\phi$ in Section~\ref{sec2}.
\end{Remark}
\begin{Remark}
  We will sometimes abbreviate ``the image of a given section'' by
  ``the section''. On the other hand, the image of a Ribet section
  will be called a Ribet variety.
\end{Remark}
\begin{Remark}
  One may wonder if, in spite of the above mentioned error in
  Theorem~6.3 of~\cite{Pink2}, Pink's general Conjecture 1.3 can still
  be applied to the study of unlikely intersections in semi-abelian
  varieties. Bertrand, who could see this only under strong
  assumptions of simplicity (and only for Manin-Mumford), suggested
  that Edixhoven study the problem in full generality. And indeed,
  after this article was finished, Edixhoven found that everything in
  Sections~4 and~5 of~\cite{Pink2} is correct, except the proof of the
  last statement, Theorem~5.7. That theorem states that Conjecture~1.3
  implies Conjecture~5.1, the unlikely intersection variant of the
  Manin-Mumford conjecture for semi-abelian varieties. Moreover, he
  also showed that, with a small change, and a more detailed
  description of the special subvarieties of the mixed Shimura
  varieties involved, Pink's argument gives that Conjecture~1.3
  implies Conjecture~5.2 (unlikely intersection generalisation of
  Mordell-Lang), and therefore, by Theorem~5.5 of~\cite{Pink2},
  implies Conjecture~5.1. The details of this will appear in an
  article in preparation by Edixhoven.
\end{Remark}

\section{The example with elliptic curves}\label{sec2}
The key player in the example in~\cite{Bertrand11} is the Poincar\'e
torsor $\calP$ on a product $E\times E^\vee$, where $E$ is a complex
elliptic curve and where $E^\vee$ is its dual. 

To make $\calP$ and $E^\vee$ more explicit, we use the isomorphism
$\lambda\colon E \to E^\vee$ that sends a point $P$ to the class of
the invertible $\calO$-module $\calO((-P)-0)$, isomorphic to
$\calO(0-P)$ (this is the unique principal polarisation of~$E$). In
the notation of~\cite[Section~6]{MumfordAV}, $\lambda=\phi_\calM$,
where $\calM$ is the invertible $\calO$-module $\calO(0)$ on~$E$, and
where $\phi_\calM$ sends $P$ to the class of
$(\transl_P^*\calM)\otimes_\calO\calM^{-1}$, with $\transl_P$ the
translation by $P$ map on~$E$.

The \emph{Poincar\'e bundle} $\calL$ on $E\times E$ is then
\begin{equation}\label{eqn:defPbundle}
\calL = 
\add^*\calM\otimes_\calO\pr_1^*\calM^{-1}\otimes_\calO\pr_2^*\calM^{-1}\otimes_\calO0^*\calM,  
\end{equation}
where $\add$, $\pr_1$, $\pr_2$, and~$0$ are the addition map, the
projections, and the constant map~$0$ from $E\times E$ to~$E$. It is
isomorphic (with the isomorphism given by the choice of a non-zero
element of the fibre $\calM(0)$ of~$\calM$ at~$0$, i.e., of a non-zero
tangent vector of $E$ at~$0$) to $\calO(D)$, with
\begin{equation}\label{eqn:defD}
D=\add^{-1}0-\pr_1^{-1}0-\pr_2^{-1}0
\end{equation}
The fibre $\calL(x,y)$ at a point $(x,y)$ is given by:
\begin{equation}\label{eqn:fibresPbundle}
\calL(x,y) =
\calM(x+y)\otimes\calM(x)^{-1}\otimes\calM(y)^{-1}\otimes\calM(0).   
\end{equation}
In particular:
$\calL(x,0)=\calM(x)\otimes\calM(x)^{-1}\otimes\calM(0)^{-1}\otimes\calM(0)=\C$, 
and similarly for $\calL(0,y)$. Hence $\calL$ is canonically trivial on the union of
$E\times\{0\}$ and $\{0\}\times E$. But let us remark that the
pullback of $\calL$ via $\diag\colon E\to E\times E$ has fibre at $x$
equal to $\calM(2x)\otimes\calM(x)^{-2}\otimes\calM(0)$, hence is
given by the divisor $\sum_{P\in E[2]}P - 2{\cdot}0$ which is of degree
2 and linearly equivalent to~$2{\cdot}0$.

The \emph{Poincar\'e torsor} $\calP$ is then the $\Gm$-torsor on
$E\times E$ (trivial locally for the Zariski topology) of isomorphisms
from $\calO$ to~$\calL$:
\begin{equation}\label{eqn:defPtorsor}
\calP = \Isom(\calO,\calL).  
\end{equation}
It is represented by a complex algebraic variety over $E\times E$,
also denoted~$\calP$. Its fibre $\calP(x,y)$ over $(x,y)$ is
the $\C^\times$-torsor $\Isom(\C,\calL(x,y))$. 

The theorem of the cube (\cite[Section~6]{MumfordAV}) says that any
invertible $\calO$-module $\calN$ on $E^n$ with $n\geq3$, whose
restrictions to $\ker(\pr_i)$ are trivial for all $i$ in
$\{1,\ldots,n\}$, is trivial. For every such~$\calN$, for any non-zero
element $s_0$ of $\calN(0,\ldots,0)$ there is a unique $s$ in
$\calN(E^n)$ such that $s(0)=s_0$ (the reason is that
$\calO(E^n)=\C$).

For example, the invertible $\calO$-module
\[
\bigotimes_{I\subset\{1,2,3\}}\add_I^*\calM^{(-1)^{\# I}} \quad \text{on $E\times E\times E$},
\]
where $\add_I\colon E^3\to E$, $(x_1,x_2,x_3)\mapsto\sum_{i\in I}x_i$, is canonically
trivial (canonically because its fibre at $(0,0,0)$ is
$\calM(0)^{\otimes 4}\otimes\calM(0)^{\otimes-4}=\C$). 
Explicitly: for all points $(x,y,z)$ of $E^3$ we have 
\[
\begin{aligned}
\calM(x+y+z)& \otimes\calM(x+y)^{-1}\otimes\calM(x+z)^{-1}\otimes\calM(y+z)^{-1}\\
& \otimes\calM(x)\otimes\calM(y)\otimes\calM(z) \otimes\calM(0)^{-1}=\C.    
\end{aligned}
\]
Similarly, the invertible $\calO$-modules on $E^3$ with fibres 
\[
\calL(x,y+z)\otimes\calL(x,y)^{-1}\otimes\calL(x,z)^{-1}
\quad\text{and}\quad 
\calL(x+y,z)\otimes\calL(x,z)^{-1}\otimes\calL(y,z)^{-1}
\]
are canonically trivial. Therefore, for all points $x$, $y$ and $z$ of
$E$ we have:
\begin{equation}\label{eqn:L=biadditive}
\calL(x,y+z) = \calL(x,y)\otimes\calL(x,z), 
\quad \calL(x+y,z)=\calL(x,z)\otimes\calL(y,z).
\end{equation}
This gives two composition laws on $\calP$: for
$\alpha\colon\C\to\calL(x,y)$ in $\calP(x,y)$ and
$\beta\colon\C\to\calL(x,z)$ in $\calP(x,z)$ we get
$\alpha\otimes\beta\colon\C\to\calL(x,y+z)$ in $\calP(x,y+z)$, and
similarly with the 2nd variable fixed. With the first variable fixed,
$\calP$ is a commutative group-variety over $E$, via~$\pr_1$, whose
fibres are extensions of $E$ by~$\Gm$, and similarly for~$\pr_2$; for
details, see Chapter~I, Section~2.5 of~\cite{Moret-Bailly} and the Proposition
of Section~2.6 there. In particular, $\calP$ is a \emph{bi-extension} of $E$
and $E$ by~$\Gm$: the two partial group laws commute with each other
in the following sense. For $x_1$, $x_2$, $y_1$ and $y_2$ in $E$, and
$p_{i,j}$ in $\calP(x_i,y_j)$, the various ways of summing the
$p_{i,j}$ leads to the same result in $\calP(x_1+x_2,y_1+y_2)$. This
is proved by considering the universal case $T:=E^4$, $x_1=\pr_1$,
$x_2=\pr_2$, $y_1=\pr_3$ ad $y_2=\pr_4$, and concluding that the
trivialisations of
\[
\calL(x_1+x_2,y_1+y_2)\otimes\calL(x_1,y_1)^{-1}\otimes\calL(x_1,y_2)^{-1}
\otimes\calL(x_2,y_1)^{-1}\otimes\calL(x_2,y_2)^{-1}
\]
corresponding to the various ways of summing are equal because they
are so at $(0,0,0,0)$: writing it out in terms of $\calM$ leads to the
tensor product of as many $\calM(0)$'s as $\calM(0)^{-1}$'s.

With these preliminaries behind us, we can finally proceed to the
construction of Ribet sections. Let $\phi$ be an endomorphism of~$E$ 
and let $\ol{\phi}:=\lambda^{-1}\circ\phi^\vee\circ\lambda$ be the
conjugate of~$\phi$. Let
\[
\gamma=
(\id,\phi-\ol{\phi})\colon 
E\to E\times E, \quad P\mapsto (P,(\phi-\ol{\phi})(P))
\]
be the graph map attached to $\phi-\ol{\phi}$. The following fact was
observed in~\cite{Breen}; see also~\cite{Jacquinot-Ribet} for
a description in terms of 1-motives.
\begin{Proposition}\label{prop:t_phi}
The invertible $\calO$-module $\gamma^*\calL$ on $E$ is canonically trivial.
\end{Proposition}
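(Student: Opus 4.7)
My plan is to decompose $\gamma^*\calL$ via the biadditivity of $\calL$ and to identify the two resulting factors using the symmetry of $\calL$ together with the defining property of the conjugate $\ol\phi$.

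First, since $\phi-\ol\phi = \phi + (-\ol\phi)$, applying the second identity in \eqref{eqn:L=biadditive} pointwise in $P\in E$, together with the canonical triviality of $\calL$ along $E\times\{0\}$ (which gives the canonical isomorphism $\calL(P,Q)\otimes\calL(P,-Q)\cong\calL(P,0)=\C$ and hence $\calL(P,-Q)\cong\calL(P,Q)^{-1}$), yields a canonical isomorphism
\[
\gamma^*\calL \;\cong\; (\id,\phi)^*\calL \;\otimes\; \bigl((\id,\ol\phi)^*\calL\bigr)^{-1}.
\]
It therefore suffices to produce a canonical isomorphism $(\id,\phi)^*\calL\cong(\id,\ol\phi)^*\calL$ of invertible $\calO_E$-modules.

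For this I would use two canonical isomorphisms of line bundles on $E\times E$. The symmetry $\sigma^*\calL\cong\calL$, where $\sigma$ swaps the factors, is visible either from \eqref{eqn:defPbundle} (the terms $\add^*\calM$ and $0^*\calM$ are $\sigma$-invariant, while $\pr_1^*\calM^{-1}$ and $\pr_2^*\calM^{-1}$ are exchanged) or from the manifestly symmetric formula \eqref{eqn:fibresPbundle}; pulling back along $\diag$ gives $(\id,\phi)^*\calL\cong(\phi,\id)^*\calL$. Separately, the identity
\[
(\phi\times\id)^*\calL \;\cong\; (\id\times\ol\phi)^*\calL
\]
on $E\times E$ is exactly the defining property of $\phi^\vee$ applied to the Poincar\'e bundle $\calP_{E\times E^\vee}$ on $E\times E^\vee$, transported through the isomorphism $\id\times\lambda\colon E\times E\to E\times E^\vee$ together with $\ol\phi=\lambda^{-1}\circ\phi^\vee\circ\lambda$. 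Pulling back along $\diag$ yields $(\phi,\id)^*\calL\cong(\id,\ol\phi)^*\calL$. Composing these two isomorphisms gives the required $(\id,\phi)^*\calL\cong(\id,\ol\phi)^*\calL$, and hence, via the reduction above, a canonical trivialization of $\gamma^*\calL$.

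The main obstacle is the second isomorphism: one must ensure that the relation $(\phi\times\id_{E^\vee})^*\calP_{E\times E^\vee}\cong(\id_E\times\phi^\vee)^*\calP_{E\times E^\vee}$ is canonical rather than defined only up to a scalar, so that the resulting trivialization of $\gamma^*\calL$ glues cleanly with the canonical trivializations of $\calL$ along the axes recalled after \eqref{eqn:fibresPbundle}. This is a matter of tracing through the standard rigidification of $\calP_{E\times E^\vee}$ along $\{0\}\times E^\vee$. In the self-contained spirit of this section, one can alternatively bypass the appeal to $E^\vee$ and verify the isomorphism via the theorem of the cube: both sides have canonically trivial restrictions to $\{0\}\times E$ and $E\times\{0\}$, so their ratio is a line bundle on $E\times E$ trivial along both axes, and can then be rigidified by its canonical value at $(0,0)$.
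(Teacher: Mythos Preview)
Your argument is correct and is essentially the paper's second proof (``a proof by universal properties''): biadditivity reduces the question to $(\id,\phi)^*\calL \cong (\id,\ol\phi)^*\calL$, which is then obtained from the duality relation $(\phi\times\id)^*\calL \cong (\id\times\ol\phi)^*\calL$ combined with the symmetry $\sigma^*\calL\cong\calL$, exactly as you do. The paper also supplies an independent first proof via divisors, computing $\gamma^*D$ explicitly and checking that it is linearly equivalent to zero.
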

\begin{proof}
  As this is the crucial ingredient of the example that we present in
  this article, we give two proofs: one for readers who prefer a
  computation using divisors, and one for those who prefer universal
  properties. But first we note that if $\phi=\ol{\phi}$, then
  $\gamma=(\id,0)$ and $\gamma^*\calL$ is canonically trivial because,
  as mentioned above, $\calL$ is canonically trivial on
  $E\times\{0\}$. So in the first proof below we may and do assume that
  $\phi\neq\ol{\phi}$. 

\subsubsection*{A proof by divisors.} The fibre of
$\gamma^*\calL$ at~$0$ is $\calL(0,0)=\C$, and $\calL$ is
isomorphic to $\calO(D)$ with
\[
D=\add^{-1}0-\pr_1^{-1}0-\pr_2^{-1}0
\]
as in~(\ref{eqn:defD}). So it suffices to show that $\gamma^*D$ is
linearly equivalent to the zero divisor on~$E$. Let
$\alpha:=\phi-\ol{\phi}$. We note that
\[
\add\circ\gamma = \add\circ(\id,\alpha) = \id+\alpha,\quad
\pr_1\circ\gamma = \id,\quad\text{and}\quad
\pr_2\circ\gamma = \alpha.
\]
Hence we have the following equalities of divisors on~$E$:
\[
(\id,\alpha)^*D  =  (\id+\alpha)^*0 - \id^*0 - \alpha^*0 
= \sum_{P\in\ker(\id+\alpha)}P - 0 - \sum_{Q\in\ker(\alpha)}Q.
\]
The degree of this divisor is zero because, in $\End(E)$, $\alpha$ is
imaginary, so we have 
\[
\deg(\id+\alpha) =
(\id+\alpha)(\id+\ol{\alpha})=\id+\alpha\ol{\alpha}
=1+\deg(\alpha).
\]
Any degree zero divisor on $E$ is linearly equivalent to $R-0$, with
$R$ the image of the divisor under the group morphism $\Div^0(E) \to
E$ that sends each point to itself. So in our case $R$ is the sum of
the points in $\ker(\id+\alpha)$, minus the sum of the points
in~$\ker(\alpha)$. These two kernels are finite commutative
groups. For such a group, the sum of the elements is $0$, except when
its 2-primary part is cyclic and non-trivial, in which case it is the
element of order~$2$. Let $a:=\phi+\ol{\phi}$ be the trace
of~$\phi$; it is in the subring $\Z$ of~$\End(E)$. Then
$\alpha=-a+2\phi$, and $\id+\alpha=(1-a)+2\phi$. So one of these has
odd degree, and the other is divisible by $2$ in $\End(E)$, and so
for none of them the $2$-primary part of the kernel is cyclic and
non-trivial.

\subsubsection*{A proof by universal properties.} We view 
$E\times E$ as an $E$-scheme via~$\pr_2$. Then $\calL$ is the
universal invertible $\calO$-module of degree~$0$ on~$E$ with given
trivialisation at~$0$: for every complex algebraic variety~$S$ and
every invertible $\calO$-module $\calN$ on $E_S$, fibrewise of
degree~$0$, and with a given trivialisation $\calO_S\to 0^*\calN$,
there is a unique $f\colon S\to E$ such that the pullback of $\calL$
via $\id\times f\colon E_S\to E_E$ is isomorphic to~$\calN$. Moreover,
in this case there is a unique isomorphism $g\colon \calN\to
(\id\times f)^*\calL$ that is compatible with the given
trivialisations at~$0$. Of course, the analogous statements are true
with $\pr_2$ replaced by~$\pr_1$.

Let us turn to~$\ol{\phi}$. It is defined as
$\lambda^{-1}\circ\phi^\vee\circ\lambda$. Hence, for $y$ in $E$,
$\ol{\phi}(y)$ is obtained as follows: $\lambda(y)$ is the isomorphism
class of the invertible $\calO$-module $\calL|_{E\times\{y\}}$ on~$E$,
and then
$\lambda(\ol{\phi}(y))=
(\lambda\circ\lambda^{-1}\circ\phi^\vee\circ\lambda)y
=\phi^\vee(\lambda(y))$ corresponds (by the definition of~$\phi^\vee$)
to~$\phi^*(\calL|_{E\times\{y\}})$.  By definition of~$\lambda$
and~$\calL$, $\lambda(\ol{\phi}(y))$ corresponds
to~$\calL|_{E\times\{\ol{\phi}(y)\}}$. Hence the invertible
$\calO$-modules $(\phi\times\id)^*\calL$ and
$(\id\times\ol{\phi})^*\calL$ on $E\times E$, both trivialised on
$\{0\}\times E$, are uniquely isomorphic on the fibres of the
second projection. Hence we have a canonical isomorphism
between $(\id\times\ol{\phi})^*\calL$ and $(\phi\times\id)^*\calL$.

As $\calL$ together with its trivialisations on $E\times\{0\}$ and
$\{0\}\times E$ is symmetric (that is, invariant under the
automorphism of $E\times E$ that sends $(x,y)$ to $(y,x)$), we get a
canonical isomorphism between $(\id\times\ol{\phi})^*\calL$ and
$(\id\times \phi)^*\calL$.

From (\ref{eqn:L=biadditive}), applied with $x=\id_E$, $y=\phi$ and
$z=-\ol{\phi}$ we get a canonical isomorphism, on~$E$, from
$\gamma^*\calL$ to
$(\id,\phi)^*\calL\otimes(\id,-\ol{\phi})^*\calL$. Applying it again,
but now with $x=\id_E$, $y=\ol{\phi}$ and $z=-\ol{\phi}$, we get a
canonical isomorphism from $\calO$ to
$(\id,-\ol{\phi})^*\calL\otimes(\id,\ol{\phi})^*\calL$, giving us a
canonical isomorphism from $(\id,-\ol{\phi})^*\calL$ to
$(\id,\ol{\phi})^*\calL^{-1}$. Combining, we see that
\[
\begin{aligned}
\gamma^*\calL & = (\id,\phi)^*\calL\otimes(\id,-\ol{\phi})^*\calL
= (\id,\phi)^*\calL\otimes(\id,\ol{\phi})^*\calL^{-1} \\
& = (\id,\phi)^*\calL\otimes(\id,\phi)^*\calL^{-1} = \calO.    
\end{aligned}
\]
\end{proof}
Now we view $\calP$ as a group variety over $E$ via $\pr_1\colon
E\times E\to E$.
The canonical trivialisation
\begin{equation}\label{eq:def_t_phi}
t_\phi\colon\calO\to\gamma^*\calL = (\id,\alpha)^*\calL  
\end{equation}
on $E$ gives, for every $x$ in $E$, an element $t_\phi(x)$ in
$\Isom(\C,\calL(x,\alpha(x)))$, hence an element in
$\calP(x,\alpha(x))$. As such, $t_\phi$ is a section of the group variety
$\calP$ over~$E$, which we call the \emph{Ribet section attached
to~$\phi$}.  

Following~\cite{Bertrand11}, we will now show that if
$\ol{\phi}\neq\phi$, then $t_\phi$ gives a
counterexample to Conjecture~6.1 of~\cite{Pink2}.

\begin{Lemma}\label{lemma:fewsubgroups}
Let $\Gm\rightarrowtail G \twoheadrightarrow E$ be an extension whose
class in the group $\Ext(E,\Gm)$ is not torsion. Then the only connected
algebraic subgroups of $G$ are $\{0\}$, $\Gm$ and~$G$.
\end{Lemma}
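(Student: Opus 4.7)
The plan is to classify the connected algebraic subgroups $H \subset G$ by dimension. Since $\dim G = 2$ and $G$ is connected, the cases $\dim H = 0$ and $\dim H = 2$ immediately give $H = \{0\}$ and $H = G$. So the real work is with $1$-dimensional connected subgroups.

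For such an $H$, I would first look at the intersection $H \cap \Gm$ inside $G$. Either $H \subset \Gm$, which forces $H = \Gm$ by connectedness and dimension, or $H \cap \Gm$ is finite. In the latter case the composition $f \colon H \hookrightarrow G \twoheadrightarrow E$ has finite kernel; its image is a connected $1$-dimensional algebraic subgroup of $E$, hence equal to~$E$, so $f$ is an isogeny of elliptic curves. The goal is then to derive a contradiction from the non-torsion hypothesis on $[G]$.

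The key observation is that the inclusion $H \hookrightarrow G$ together with $f \colon H \to E$ yields, by the universal property of the fibre product, a morphism $H \to f^*G = G \times_E H$ which is a section of the pulled-back extension $0 \to \Gm \to f^*G \to H \to 0$. Hence $f^*[G] = 0$ in $\Ext(H,\Gm)$. Under the canonical identifications $\Ext(E,\Gm) = \Pic^0(E) = E^\vee$ and $\Ext(H,\Gm) = H^\vee$, the pullback $f^*$ coincides with the dual isogeny $f^\vee \colon E^\vee \to H^\vee$, which has finite kernel. Therefore $[G]$ lies in a finite subgroup of $\Ext(E,\Gm)$ and is torsion, contradicting the hypothesis.

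The only step that needs a bit of care is the last one: checking that the pullback map on $\Ext^1(-,\Gm)$ for an isogeny of elliptic curves is indeed the dual isogeny and hence has finite kernel, and confirming that the section of $f^*G \to H$ really exists (which boils down to the fact that $H$, sitting inside $G$, maps compatibly to both $G$ and $E$ via $\pr_2 = f$). Everything else is formal.
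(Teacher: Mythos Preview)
Your proof is correct and follows essentially the same line as the paper's: classify by dimension, and in the $1$-dimensional case with $H\neq\Gm$ show that $H\to E$ is an isogeny and that the inclusion $H\hookrightarrow G$ splits the pulled-back extension, forcing $[G]$ to be torsion. The only cosmetic difference is that the paper concludes by factoring $n{\cdot}\colon E\to H\to E$ through the dual isogeny (so $(n{\cdot})^*[G]=0$ directly), whereas you phrase it as $f^\vee\colon E^\vee\to H^\vee$ having finite kernel; these are the same observation.
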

\begin{proof}
  Let $H$ be a connected algebraic subgroup of~$G$. Then $\dim(H)$ is
  $0$, $1$ or~$2$. If it is $0$ then $H=\{0\}$, and if it is $2$ then
  $H=G$, so we assume it is~$1$, and that $H$ is not equal
  to~$\Gm$. Then $H\to E$ is surjective, and since $\G_m \cap H$ is a
  finite group, $H\to E$ is an unramified cover. As $H$ is connected,
  it is itself an elliptic curve, and there is an $n\in\Z_{>0}$ and a
  factorisation $n{\cdot}\colon E\to H\to E$. This means that the
  extension $\Gm\rightarrowtail G\twoheadrightarrow E$ is split after
  pullback via $n{\cdot}\colon E\to E$, hence its class is torsion.
\end{proof}

\begin{Lemma}\label{lemma:tgeneric}
  If $\phi\neq\ol{\phi}$, then the union over all $n\in\Z$ of the
  images $(n{\cdot}t_\phi)(E)$ of the sections $n{\cdot}t_\phi$ is
  Zariski dense in~$\calP$.
\end{Lemma}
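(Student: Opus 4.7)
The plan is to let $Z\subseteq\calP$ denote the Zariski closure of $\bigcup_{n\in\Z}(n{\cdot}t_\phi)(E)$ and to show $Z=\calP$ by a fibrewise analysis along the projection $\pr_1\colon\calP\to E$. For each $x\in E$, the fibre $\calP_x:=\pr_1^{-1}(x)$ is a commutative group variety fitting into an extension $\Gm\rightarrowtail\calP_x\twoheadrightarrow E$, whose class in $\Ext(E,\Gm)\cong E^\vee(\C)$ is $\lambda(x)$. In particular, when $x$ is non-torsion this class is non-torsion, so Lemma~\ref{lemma:fewsubgroups} applies: the only connected algebraic subgroups of $\calP_x$ are $\{0\}$, $\Gm$ and $\calP_x$ itself.

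The next step is to show that for such $x$ the cyclic subgroup $\Z{\cdot}t_\phi(x)\subseteq\calP_x$ is Zariski dense. Let $H_x\subseteq\calP_x$ be its Zariski closure, which is a closed algebraic subgroup; its identity component $H_x^0$ must be one of $\{0\}$, $\Gm$, $\calP_x$. Setting $\alpha:=\phi-\ol{\phi}$, the image of $t_\phi(x)$ under the quotient $\calP_x\twoheadrightarrow E$ is $\alpha(x)$, and since $\phi\neq\ol{\phi}$ the endomorphism $\alpha$ is a nonzero isogeny, so $\alpha(x)$ is non-torsion whenever $x$ is. The image of $H_x$ in $E$ therefore contains the infinite set $\Z{\cdot}\alpha(x)$, which excludes $H_x^0=\{0\}$ (which would force $H_x$ finite) as well as $H_x^0=\Gm$ (which would force the image in $E$ finite). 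Hence $H_x=\calP_x$.

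It remains to globalise. For every non-torsion $x\in E$ one has $Z\cap\calP_x\supseteq H_x=\calP_x$, so the fibre of the restriction $Z\to E$ of $\pr_1$ over $x$ has dimension $2$. Since non-torsion points are Zariski dense in $E$ and the fibre dimension of $Z\to E$ is upper semicontinuous, every fibre has dimension at least $2$; as $\calP_x$ is irreducible of dimension $2$, this forces $Z\cap\calP_x=\calP_x$ for every $x\in E$, whence $Z=\calP$.

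The main point that needs careful verification is the identification of the extension class of $\calP_x$ in $\Ext(E,\Gm)$ with $\lambda(x)$, since the whole argument relies on invoking Lemma~\ref{lemma:fewsubgroups} on a Zariski dense set of fibres; once this identification is in hand, the rest is a routine combination of the lemma with the upper semicontinuity of fibre dimension.
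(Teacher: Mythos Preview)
Your argument follows the same strategy as the paper's: for each non-torsion $x\in E$ you show that the Zariski closure of $\Z{\cdot}t_\phi(x)$ inside $\calP_x$ is all of~$\calP_x$ via Lemma~\ref{lemma:fewsubgroups}, and then globalise. One small technical point in your final step: upper semicontinuity of $x\mapsto\dim(Z\cap\calP_x)$ as a function on the \emph{base}~$E$ is not automatic, since $\calP\to E$ (hence $Z\to E$) is not proper; the paper instead concludes directly from the fact that $\bigcup_{x\notin E_\tors}\calP_x$ is Zariski dense in~$\calP$ (equivalently, $\pr_1$ is open, so if $\calP\setminus Z$ were nonempty its image would be a nonempty open subset of~$E$ contained in~$E_\tors$, which is impossible).
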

\begin{proof}
  Let $Z$ be the Zariski closure of the union of the
  $(n{\cdot}t_\phi)(E)$.  Let $x$ in $E$ be of infinite order. Then
  $y:=\alpha(x)$ is of infinite order as well. The point $t_\phi(x)$
  of the extension $\calP_x$ of $E$ by $\Gm$ has image $y$ in~$E$. The
  Zariski closure in $\calP_x$ of $\{n{\cdot}t_\phi(x) : n\in\Z\}$ is
  a closed subgroup $H$ of~$\calP_x$. The image of $H$ in $E$ is
  closed ($H\to E$ is a morphism of algebraic groups), and
  contains~$y$, hence is equal to~$E$. Hence $\dim(H)$ is $1$
  or~$2$. Assume that $\dim(H)=1$. By Lemma~\ref{lemma:fewsubgroups}
  the extension class of $\calP_x$ is torsion, but that contradicts
  that this class, being~$\lambda(x)$, is not torsion. We conclude
  that $\dim(H)=2$, and $H=\calP_x$. Hence $Z$ contains all $\calP_x$
  with $x$ not torsion. Then $Z=\calP$.
\end{proof}

\begin{Theorem}\label{thm:ttorsionproperty}
For every torsion point $x$ in $E$, $t_\phi(x)$ is torsion in~$\calP_x$.
\end{Theorem}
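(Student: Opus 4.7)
The plan is to exploit the two commuting partial laws of the biextension $\calP$, together with a homomorphism property of $t_\phi$ extracted from Proposition~\ref{prop:t_phi}, to reduce the assertion to the observation that characters of finite groups take values in roots of unity.

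First, since $(\id, n\alpha) \colon E \to E \times E$ is a group homomorphism for every integer $n$, the pullback of $\calP$ along it is a commutative algebraic group extension of $E$ by $\Gm$; its class in $\Pic^0(E) = \Ext(E, \Gm)$ is the class of the line bundle $(\id, n\alpha)^*\calL \cong (\gamma^*\calL)^{\otimes n}$ (using biadditivity of $\calL$ in the second variable). By Proposition~\ref{prop:t_phi} the bundle $\gamma^*\calL$ is trivial, hence so are all its powers, and the extension $(\id, n\alpha)^*\calP$ is split for every~$n$. Since $\Hom(E, \Gm) = 0$, the splitting is unique; the normalisation at the origin then identifies the canonical line-bundle trivialisation with the group-theoretic splitting. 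Consequently the map $y \mapsto n \cdot t_\phi(y)$---where multiplication by~$n$ is taken in the fibre $\calP_y$ of $\pr_1$, using biadditivity of $\calL$ in the second variable---is a group homomorphism from $E$ to the semi-abelian variety $(\id, n\alpha)^*\calP$.

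Now fix $x \in E[n]$, so that $n\alpha(x) = 0$. Define $\sigma_n(y) := n \cdot t_\phi(y) \in \calP_{(y, 0)}$, viewed in $\C^\times$ via the canonical trivialisation of $\calL$ on $E \times \{0\}$. Over the locus $E \times \{0\}$ the biextension structure reduces to the partial law that fixes the second variable at~$0$, and under the canonical trivialisation that law is ordinary multiplication in $\C^\times$. Thus the restriction of the homomorphism above to $E[n]$ is a genuine group homomorphism $\sigma_n \colon E[n] \to \C^\times$; since $E[n]$ is finite, $\sigma_n$ takes values in the group of roots of unity. Hence $n \cdot t_\phi(x)$ is a root of unity in the toric subgroup $\Gm \subset \calP_x$, so $t_\phi(x)$ is torsion in~$\calP_x$.

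The step expected to be the main obstacle is the clean identification of the canonical line-bundle trivialisation $n \cdot t_\phi$ with the unique group-theoretic splitting of the extension $(\id, n\alpha)^*\calP$; bridging the line-bundle picture and the extension picture rests on the standard comparison between $\Pic^0(E)$ and $\Ext(E, \Gm)$, together with the uniqueness of splittings arising from $\Hom(E, \Gm) = 0$.
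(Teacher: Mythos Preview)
Your argument contains a genuine error, and it is not at the step you flagged as the main obstacle. Your identification of $n\cdot t_\phi$ with the unique group-theoretic splitting of $(\id,n\alpha)^*\calP$ is in fact correct: both are sections of the same $\Gm$-torsor over the projective variety~$E$, both take value~$1$ at~$0$, and such sections differ only by a global unit, hence a constant. The problem is the next step, where you conclude that $\sigma_n\colon E[n]\to\C^\times$, $y\mapsto n\cdot t_\phi(y)$ read through the canonical trivialisation of $\calL$ on $E\times\{0\}$, is a group homomorphism.

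This claim is false. What you actually have is a homomorphism $n\cdot t_\phi\colon E\to G$ for the group law on $G=(\id,n\alpha)^*\calP$ coming from the theorem of the square (equivalently, from the Weil--Barsotti identification $\Pic^0(E)=\Ext(E,\Gm)$). To pass to a homomorphism $E[n]\to\C^\times$ via the trivialisation of $\calP|_{E\times\{0\}}$, you would need that group law to coincide, over $E[n]$, with the \emph{first partial law}~$+_1$ of the biextension. These are two different group structures on the same $\Gm$-torsor over $E[n]$, and they do not agree. Concretely, the paper shows (Proposition~\ref{prop:ribsectors} and Theorem~\ref{thm:tJ}(2)) that $n\cdot t_\phi(x)=e_n(\phi x,x)$. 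Expanding this for $x=x_1+x_2$ gives
\[
\sigma_n(x_1+x_2)=\sigma_n(x_1)\,\sigma_n(x_2)\,e_n(\alpha x_1,x_2),
\]
so $\sigma_n$ is a genuinely quadratic function on~$E[n]$, with associated bilinear form $e_n(\alpha\,\cdot\,,\,\cdot\,)$, and is \emph{not} a homomorphism when $\alpha\ne 0$. The discrepancy between the two group laws is precisely this $2$-cocycle.

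The paper's proofs proceed differently: they compute $n\cdot t_\phi(x)$ directly as the Weil pairing value $e_n(\phi x,x)\in\mu_n$, either via a biextension diagram chase (Proposition~\ref{prop:ribsectors}), via Weil reciprocity in the generalised jacobian picture (Theorem~\ref{thm:tJ}), or via explicit real coordinates in the mixed Shimura setting (Proposition~\ref{prop_tors_prop_r_f_msv}). Each of these yields the sharper statement $n^2\cdot t_\phi(x)=0$ without passing through any homomorphism claim for~$\sigma_n$.
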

\begin{proof}
We will give three proofs: one in the context of abelian schemes and
biextensions (Proposition~\ref{prop:ribsectors}), one, more
elementary, using generalised jacobians of elliptic curves with a
double point in Section~\ref{sec6}, and a third proof, using the
description of $t_\phi(E)$ as a special subvariety of a mixed
Shimura variety (Proposition~\ref{prop_tors_prop_r_f_msv}).
We refer to~\cite[Section~1]{Bertrand11}, for the initial proof of
Theorem~\ref{thm:ttorsionproperty}, based on the theory of 1-motives. 
\end{proof}
We now explain why the closed subvariety
$Y:=t_\phi(E)$ in the family of semi-abelian varieties $B:=\calP$
over~$X:=E$ is a counter-example to~\cite[Conjecture~6.1]{Pink2} when
$\phi-\ol{\phi}\neq 0$. 
First of all, $Y$ is not contained in a proper subvariety of $B$ that
is a subgroup scheme of $B$ over $X$ because of
Lemma~\ref{lemma:tgeneric}. 

Secondly, $d:=\dim(Y)=1$, hence according to the conjecture, the
intersection of $Y$ with the set $B^{[>1]}$ that is the union, over
all $x$ in $X$, of all subgroups of $B_x$ of codimension $>1$, should
not be Zariski dense in~$Y$. However, $B^{[>1]}$ is the set of points
that are torsion in their fibre, and
Theorem~\ref{thm:ttorsionproperty} says that the intersection is
infinite. 

\section{The example with abelian schemes}\label{sec3}

In this section we consider abelian schemes, but even in the case of
elliptic curves, this section provides a new point of view on 
Ribet sections and their properties. We recommend Chapter~I of
\cite{Moret-Bailly} and references therein for further details about
biextensions, duality and pairings.

Let $S$ be a scheme, $A$ an abelian scheme over~$S$, and $A^\vee$ its
dual (Section~I.1 in~\cite{Faltings-Chai}). Let $\calL$ be the
universal line bundle on $A\times_S A^\vee$, rigidified, compatibly,
at $\{0\}\times A^\vee$ and~$A\times\{0\}$; it identifies $A$ with the
dual of~$A^\vee$. Then $\calP=\Isom_{A\times_S A^\vee}(\calO,\calL)$
is the Poincar\'e $\Gm$-torsor on $A\times_S A^\vee$, and as described
in the previous section in the case of elliptic curves, it is a
biextension of $A$ and $A^\vee$ by~$\Gm$.  In particular,
over~$A^\vee$, $\calP$ is the universal extension of $A$ by~$\Gm$, and
over~$A$, $\calP$ is the universal extension of $A^\vee$
by~$\Gm$. Proposition~\ref{prop:t_phi} extends to the present
situation as follows (see~\cite{Breen}, \cite{Chambert-Loir99},
\cite[Section~8.3]{Lopuhaa}). 

\begin{Proposition}\label{prop:ribsec}
  Let $S$ be a scheme, $A$ an abelian scheme over~$S$, $\calP$ the
  Poincar\'e torsor on $A\times_S A^\vee$, $f\colon A^\vee\to A$ a
  morphism of group schemes, $f^\vee\colon A^\vee\to (A^\vee)^\vee=A$
  its dual, and
\[
\alpha:=f-f^\vee\colon A^\vee\to A\,.
\]
The restriction of $\calP$ to the graph of $\alpha$ has a unique
section~$r_f$
\[
\begin{tikzcd}
{\Gm}_{A^\vee} \arrow[dr] \arrow[r, hook] 
& \calP \arrow[r, two heads] \arrow[d]
& A_{A^\vee} = A\times_SA^\vee \arrow[dl]\\
 & A^\vee \arrow[u, bend left, "r_f"] 
 \arrow[ur, bend right, "{(\alpha,\id)}"'] &
\end{tikzcd}
\]
with value $1$ at the origin.
\end{Proposition}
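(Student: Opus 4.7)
My plan is to adapt the ``universal properties'' proof of Proposition~\ref{prop:t_phi} to the abelian scheme setting, where divisor-theoretic arguments are no longer available but the characterising properties of $\calP$ still apply. The key inputs are the biadditivity of the biextension $\calP$ in its first variable, the defining property of the dual morphism $f^\vee$, and the symmetry of $\calP$ under biduality $A = (A^\vee)^\vee$, all compatible with the rigidifications at the zero sections.

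First I would use biadditivity to rewrite the trivialisation problem: the biextension structure on $\calP$ yields a canonical isomorphism of line bundles on $A^\vee$,
\[
(\alpha,\id)^*\calP = (f-f^\vee,\id)^*\calP \;\cong\; (f,\id)^*\calP \otimes \bigl((f^\vee,\id)^*\calP\bigr)^{-1},
\]
compatible with the rigidifications at the zero section. The task thus reduces to producing a canonical isomorphism between $(f,\id)^*\calP$ and $(f^\vee,\id)^*\calP$ on $A^\vee$.

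For this I would work on $A^\vee\times_S A^\vee$. By the defining property of the dual morphism $f^\vee$, the line bundles $(f\times\id)^*\calP$ and $(\id\times f^\vee)^*\calP'$ are canonically isomorphic, where $\calP'$ denotes the Poincar\'e torsor on $A^\vee\times_S A$. Biduality provides a canonical identification $\calP'\cong\sigma^*\calP$ with $\sigma\colon A^\vee\times_S A\to A\times_S A^\vee$ the swap. Pulling back along the diagonal $\Delta\colon A^\vee\to A^\vee\times_S A^\vee$ then yields the required canonical isomorphism $(f,\id)^*\calP\cong(f^\vee,\id)^*\calP$, and combining with the previous step trivialises $(\alpha,\id)^*\calP$ canonically. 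This canonical trivialisation is the section $r_f$.

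Uniqueness of $r_f$ follows from the fact that for the abelian scheme $\pi\colon A^\vee\to S$ one has $\pi_*\calO_{A^\vee} = \calO_S$, so the global units on the total space $\calP$ restricted to the graph of $\alpha$ pull back from $S$, and the condition of taking value $1$ at the zero section of $A^\vee$ pins this unit down to~$1$. The delicate point, and what I expect to be the main obstacle, is book-keeping the rigidifications carefully through the biduality identification and the various pullbacks, so that each canonical isomorphism genuinely preserves the trivialisations on the zero sections and the resulting $r_f$ is normalised at the origin; this is essentially the content of the computations in~\cite{Breen} and~\cite{Lopuhaa}.
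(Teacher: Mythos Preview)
Your proposal is correct and follows essentially the same approach as the paper's proof: both split $(\alpha,\id)^*\calP$ via biadditivity in the first variable, identify $(f,\id)^*\calP$ with $(f^\vee,\id)^*\calP$ using the defining property of $f^\vee$ together with the symmetry $\calP'\cong\sigma^*\calP$ from biduality, restrict to the diagonal of $A^\vee\times_S A^\vee$, and deduce uniqueness from $\calO(A^\vee)^\times=\calO(S)^\times$. The only cosmetic difference is that the paper phrases everything in terms of $T$-points $\calP_2(fx,y)=\calP_1(x,f^\vee y)=\calP_2(f^\vee y,x)$ (starting with two abelian schemes $A_1,A_2$ and then specialising to $A_1=A_2^\vee$), whereas you work directly with global pullbacks of line bundles; the content is the same.
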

\begin{proof}
We start in a more general situation: let $A_1$ and $A_2$ be abelian
schemes over~$S$, $\calP_1$ and $\calP_2$ their Poincar\'e torsors,
and $f\colon A_1\to A_2$. Then the dual $f^\vee\colon A_2^\vee\to
A_1^\vee$ is defined by the condition that the pullback of the
universal extension
\[
\begin{tikzcd}
{\Gm}_{A_2^\vee} \arrow[r, hook] 
& \calP_2 \arrow[r, two heads] 
& (A_2)_{A_2^\vee} = A_2\times_SA_2^\vee 
\end{tikzcd}
\]
by $f\times \id\colon A_1\times_S A_2^\vee\to A_2\times_S A_2^\vee$ is
isomorphic to the pullback of the universal extension
\[
\begin{tikzcd}
{\Gm}_{A_1^\vee} \arrow[r, hook] 
& \calP_1 \arrow[r, two heads] 
& (A_1)_{A_1^\vee} = A_1\times_SA_1^\vee 
\end{tikzcd}
\]
by $\id\times f^\vee\colon A_1\times A_2^\vee\to A_1\times
A_1^\vee$. Such an isomorphism is unique, hence
\[
\text{for all $T\to S$, $x\in A_1(T)$, $y\in A_2^\vee(T)$:}
\quad \calP_1(x,f^\vee y) = \calP_2(fx,y)\,.
\]
Now we specialise to the case where $A_1=A_2^\vee$. Then
$A_1\times_S A_1^\vee=A_2^\vee\times_S A_2$, with Poincar\'e torsors
$\calP_1$ and $\sigma^*\calP_2$, where $\sigma\colon
A_2^\vee\times_SA_2\to A_2\times_S A_2^\vee$ is the coordinate switch. Then
we have, for $T\to S$, $x\in A_1(T)=A_2^\vee(T)$ and $y\in A_2^\vee(T)$:
\begin{equation}\label{eq:isombiexts}
\calP_2(fx,y) = \calP_1(x,f^\vee y) = \calP_2(f^\vee y,x)\,.
\end{equation}
Now we restrict to the case $y=x$, where we have
$\calP_2(fx,x)=\calP_2(f^\vee x,x)$. Then additivity in the first
factor gives that 
\begin{multline}\label{eq:ribsec}
\calP_2(\alpha x,x) = \calP_2((f-f^\vee)x,x) = \calP_2(fx - f^\vee x,x) \\
= \calP_2(fx,x)\otimes\calP_2(f^\vee x,x)^{-1} 
=\Hom(\calP_2(fx,x),\calP_2(f^\vee x,x)) = {\Gm}_T\, .
\end{multline}
Now we take $A_2=A$, and define $r_f\colon A^\vee \to \calP$ by
letting it send $x$ to the $T$-point of $\calP(\alpha x,x)$
corresponding to the unit section of ${\Gm}_T$ via the isomorphism
in~(\ref{eq:ribsec}). 

By construction, $r_f(0)=1$. This condition makes it unique, as two
such sections differ by a factor in $\calO(A^\vee)^\times=\calO(S)^\times$,
with value $1$ at~$0\in A^\vee(S)$. 
\end{proof}

\begin{Remark}
When $A \rightarrow S$ is a complex elliptic curve~$E$, and
$\lambda\colon E\to E^\vee$ is as in Section~\ref{sec2}, and $\phi$ is
in~$\End(E)$, and $f=\phi\circ\lambda$, then $t_\phi$ as
in~(\ref{eq:def_t_phi}) and $r_f$ as in Proposition~\ref{prop:ribsec} are
equal (well, up to switching the factors of $E\times E$), because they
are sections of the same $\Gm$-torsor over~$E$, with the same value
at~$0$. Therefore, Proposition~\ref{prop:ribsectors} below proves
Theorem~\ref{thm:ttorsionproperty}. 
\end{Remark}

The following Proposition gives the torsion property of $r_f$ at the
torsion points of~$A^\vee$: it implies that for $T\to S$ and $x$ in
$A^\vee[n](T)$ we have $n^2r_f(x)=1$. (See
Proposition~\ref{prop_tors_prop_r_f_msv}  and Theorem~\ref{thm:tJ} for
other proofs of this equality.) 

\begin{Proposition}\label{prop:ribsectors}
  Let $S$, $A$, $\calP$, $f$, $\alpha$ and $r_f$ be as in
  Proposition~\ref{prop:ribsec}. Let $n\geq1$, let $T$ be an
  $S$-scheme, and $x\in A^\vee[n](T)$. Then
\[
nr_f(x)=e_n(fx,x)\quad \text{in}\quad  \calP(n\alpha x,x)=\calP(0,x)=\Gm(T),
\]
with $e_n\colon A[n](T)\times A^\vee[n](T)\to \mu_n(T)$ the Weil
pairing (whose definition is recalled below).
\end{Proposition}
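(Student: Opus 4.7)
The plan is to compute $n\cdot r_f(x)$ by unpacking the construction of $r_f$ in terms of the biextension isomorphism behind~\eqref{eq:isombiexts}, then identifying the result with a value of the Weil pairing via its biextension-theoretic description. Working locally on~$T$, choose a trivialization $P$ of the $\Gm$-torsor $\calP(fx,x)$; the defining isomorphism $(f\times\id)^*\calP \cong (\id\times f^\vee)^*\sigma^*\calP$ of biextensions of $A^\vee\times A^\vee$ by $\Gm$ sends $P$ to a trivialization $P'$ of $\sigma^*\calP(x,f^\vee x)=\calP(f^\vee x,x)$. By the proof of Proposition~\ref{prop:ribsec}, $r_f(x)$ is the unique element of
\[
\calP(\alpha x,x) \;=\; \calP(fx,x)\otimes\calP(f^\vee x,x)^{-1}
\]
corresponding to $1\in\Gm(T)$ under this identification; equivalently, $r_f(x)=P\cdot(P')^{-1}$, the product and inverse being taken in the first partial group law of~$\calP$. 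Using that $x$ is $n$-torsion, iterating this law $n$ times gives
\[
n\cdot r_f(x) \;=\; \frac{n\cdot_1 P}{n\cdot_1 P'} \;\in\; \calP(0,x) \;=\; \Gm(T),
\]
where $n\cdot_i$ denotes the $n$-fold iterate of the $i$-th partial law and the $\Gm$-valued interpretation uses the canonical rigidification on $\{0\}\times A^\vee$.

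The crux is the following law-swap observation: the canonical identification $\sigma^*\calP(u,v)=\calP(v,u)$ exchanges the two partial laws (addition in the $A^\vee$-factor of $\sigma^*\calP$ is addition in the second factor of~$\calP$, and conversely). Consequently, the biextension isomorphism defining $f^\vee$ matches $\calP$'s first partial law applied to~$P$ with $\calP$'s second partial law applied to~$P'$. Applying this to $n$-fold iterates and comparing the rigidifications on $A\times\{0\}$ and $\{0\}\times A^\vee$ yields
\[
n\cdot_1 P' \;=\; n\cdot_2 P \quad\text{in}\quad \Gm(T).
\]
Substituting,
\[
n\cdot r_f(x) \;=\; \frac{n\cdot_1 P}{n\cdot_2 P},
\]
which is precisely the biextension-theoretic description of the Weil pairing $e_n(fx,x)\in\mu_n(T)$ (see Chapter~I of~\cite{Moret-Bailly}). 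Independence of the choice of $P$ and membership in $\mu_n$ are then automatic from $\Gm$-equivariance together with $n\alpha x=0$.

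The main obstacle is this middle step: one must verify with care that the biextension isomorphism characterizing $f^\vee$ really matches first law on one side with second law on the other under $\sigma^*\calP(u,v)=\calP(v,u)$, and then that the compatibility of the isomorphism with the rigidifications at $\{0\}\times A^\vee$ and $A\times\{0\}$ upgrades the matching from one of $\Gm$-torsors to an actual equality in $\Gm(T)$. Once this is pinned down, everything else is bookkeeping about $r_f$ and $e_n$ as operations on the biextension~$\calP$.
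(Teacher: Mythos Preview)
Your approach is correct and is essentially the same as the paper's, just packaged differently. The paper organises the argument as a single commutative diagram with three columns---the leftmost tracking $\calP(fx,x)$, the middle $\sigma^*\calP(x,f^\vee x)$, the rightmost $\calP(f^\vee x,x)$---and three paths from bottom right to upper left computing $e_n(fx,x)$, $nr_f(x)$, and $e_n(x,f^\vee x)$ respectively; commutativity of the labelled faces then yields all three equalities at once. Your version picks a local section $P$ of $\calP(fx,x)$, pushes it through the biextension isomorphism to $P'\in\calP(f^\vee x,x)$, and computes directly. The crucial identity $n\cdot_1 P'=n\cdot_2 P$ in $\Gm(T)$ that you isolate is exactly what the paper's faces (b)--(i) encode: the defining isomorphism $(f\times\id)^*\calP\cong(\id\times f^\vee)^*\sigma^*\calP$ is an isomorphism of biextensions of $A^\vee\times_S A^\vee$, hence intertwines the second partial law on the source with the first partial law of $\calP$ on the target (via the swap $\sigma$), and the compatibility with rigidifications on $A\times\{0\}$ and $\{0\}\times A^\vee$ is automatic because those rigidifications are forced by the biextension structure. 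Your remaining bookkeeping, $nr_f(x)=(n\cdot_1 P)/(n\cdot_1 P')=(n\cdot_1 P)/(n\cdot_2 P)=e_n(fx,x)$, is then the same computation the diagram performs. The only thing the paper's presentation gives beyond yours is the by-product $e_n(fx,x)=e_n(x,f^\vee x)$ (adjointness of $f$ and $f^\vee$ for~$e_n$), coming from the third path; this is not needed for the proposition.
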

\begin{proof}
The base change $T\to S$ reduces to the case where $T=S$. First we
describe the Weil pairing in terms of~$\calP$. Let $z\in A[n](S)$ and
$y\in A^\vee[n](S)$. We have the following
canonical isomorphisms between $\Gm$-torsors on~$S$,
\[
\begin{tikzcd}
{\Gm}_S \arrow[r, equal] \arrow[d,"{e_n(z,y)}"] & \calP(z,0) \arrow[r, equal] 
& \calP(z,ny) \arrow[r,equal,"+_2"] 
& \calP(z,y) ^{\otimes n} \arrow[d,equal,"\id"] \\
{\Gm}_S \arrow[r, equal]& \calP(0,y) \arrow[r, equal] 
& \calP(nz,y) \arrow[r,equal,"+_1"] 
& \calP(z,y) ^{\otimes n}
\end{tikzcd}
\]
where the superscript $+_1$ means ``induced by additivity in the first
coordinate'', etc., and where $\calP(z,y)^{\otimes n}$ is the
contracted product of $n$ copies of~$\calP(z,y)$.  As the diagram
shows, we define $e_n(z,y)$ to be the image of the section $1$ of the
top ${\Gm}_S$ in the bottom~${\Gm}_S$. We claim that this is the usual
Weil pairing: let $\calP_y$ be the extension of $A$ by ${\Gm}_S$
at~$y$, then, as $n{\cdot}y=0$ in~$A^\vee(S)$, the pullback of the
extension
\[
\begin{tikzcd}
{\Gm}_S \arrow[r, hook] 
& \calP_y \arrow[r, two heads] 
& A
\end{tikzcd}
\]
by $n{\cdot}\colon A\to A$ splits (uniquely as for all extensions of
abelian schemes by affine group schemes), and so
there is a unique $\tilde{n}\colon A\to \calP_y$ that
lifts $n{\cdot}\colon A\to A$, and the restriction $\tilde{n}\colon
A[n]\to\mu_n$ sends $z$ to $e_n(z,y)$.

The following commutative diagram relates $nr_f(x)$ to $e_n(fx,x)$ and
$e_n(x,f^\vee x)$: going from bottom right to upper right and then
upper left is multiplication by $e_n(x,f^\vee x)$, going from bottom
right to middle right and then middle left and then upper left
is~$nr_f(x)$ by~(\ref{eq:ribsec}), and from bottom right to upper left
via bottom left is~$e_n(fx,x)$.
\begin{equation}\label{eq:ribsecweil}
\begin{tikzcd}
{\Gm}_S \arrow[r,equal, "\id"] \arrow[d,equal] \arrow[dr, phantom,
"\text{\scriptsize b}"]
& {\Gm}_S \arrow[r,equal,"\id"] \arrow[d,equal] \arrow[dr, phantom, "\text{\scriptsize  f}"]
& {\Gm}_S \arrow[d,equal] \\ 
\calP(0,x) \arrow[d,equal] \arrow[r,equal] \arrow[dr, phantom, "\text{\scriptsize  c}"
near start]
& (\sigma^*\calP)(0,f^\vee x) \arrow[d,equal] \arrow[r,equal] 
\arrow[dr, phantom, "\text{\scriptsize  g}"]
& \calP(f^\vee x,0) \arrow[d,equal] \\
\calP(fx,x)^{\otimes n} \arrow[d,equal] \arrow[r,equal] 
\arrow[dr, phantom, "\text{\scriptsize  d}"]
& (\sigma^*\calP)(x,f^\vee x)^{\otimes n} \arrow[d,equal]
\arrow[r,equal]  \arrow[dr, phantom, "\text{\scriptsize  h}"]
& \calP(f^\vee x,x)^{\otimes n} \arrow[d,equal]
\arrow[ll, bend right=13, crossing over, "{nr_f(x)}"' pos=0.65] \\
\calP(fx,0) \arrow[d,equal] \arrow[r,equal] 
\arrow[dr, phantom, "\text{\scriptsize  e}"]
& (\sigma^*\calP)(x,0) \arrow[d,equal] \arrow[r,equal]
\arrow[dr, phantom, "\text{\scriptsize  i}"]
& \calP(0,x) \arrow[d,equal] \\
{\Gm}_S \arrow[r,equal,"\id"'] \arrow[uuuu,bend left=60,
"{e_n(fx,x)}", very near end]
\arrow[uuuu, phantom, bend left=45, "\text{\scriptsize  a}"]
& {\Gm}_S \arrow[r,equal,"\id"'] 
& {\Gm}_S \arrow[uuuu,bend right=60,"{e_n(x,f^\vee x)}"', very near start]
\arrow[uuuu, phantom, bend right=45, "\text{\scriptsize  j}"]
\end{tikzcd}
\end{equation}
Here are arguments for the commutativity of all faces (a--j) in the diagram. 
\begin{description}
\item[a] This is the definition of $e_n(fx,x)$.
\item[b--e] This is because the equality signs in
  (\ref{eq:isombiexts}) are isomorphisms of biextensions on
  $A_2^\vee\times_S A_2^\vee$.
\item[f--i] These follow directly from the definition
  of~$\sigma^*\calP$.
\item[j] This is the definition of $e_n(x,f^\vee x)$.
\end{description}
Let us remark that the commutativity of this diagram shows that
$f^\vee$ and $f$ are adjoints for the $e_n$-pairing, and that when
$f^\vee=f$, $e_n(fx,x)=1$ for all $x$ in $A^\vee[n](S)$, in
particular, that the pairings attached to a polarisation are alternating.
\end{proof}

\section{The Poincar\'e torsor as mixed Shimura variety}\label{sec4}
In this section we describe the Poincar\'e torsor of the universal
family of principally polarised complex abelian varieties of
dimension~$d$ as a mixed Shimura variety, that is, as a moduli space
for mixed Hodge structures. We recommend \cite[Section~2]{Pink1} (and
also~\cite{Klingler} and~\cite{HodgeII}) as an introduction to mixed
Hodge structures and (connected) mixed Shimura varieties, but we do
not assume the reader to be familiar with these notions. In fact, we
hope that the example treated here also provides a good introduction,
and perhaps a motivation to read more. We find that the point of view
of mixed Shimura varieties gives a simple and beautiful perspective on
the uniformisation of the universal Poincar\'e torsor. The notion of
1-motives from~\cite{HodgeIII} provides an algebraic description of
the mixed Hodge structures that we encounter, but we will not use this.

\subsection{Pure Hodge structures}
\label{sec:phs}
For $n$ in~$\Z$, a \emph{$\Z$-Hodge structure of weight~$n$} is a
finitely generated $\Z$-module $M$ together with a decomposition
(called Hodge decomposition) of the complex vector
space~$M_\C:=\C\otimes M$:
\[
  M_\C = \bigoplus_{\substack{p,q\in\Z\\p+q=n}} M^{p,q},
\]
such that for all $p,q$ in $\Z$ with $p+q=n$:
\[
  \quad M^{q,p}=\ol{M^{p,q}}, 
\]
where $\ol{M^{p,q}}$ is the image of $M^{p,q}$ under the map
$M_\C\to M_\C$ that sends $z\otimes m$ to $\ol{z}\otimes m$.  A
\emph{pure $\Z$-Hodge structure} (also called \emph{split mixed
  $\Z$-Hodge structure}) is a finitely generated $\Z$-module~$M$,
together with a direct sum decomposition
\[
  M/M_\tors=\bigoplus_{n\in\Z}M_n,
\]
and for each $n$ a Hodge structure of weight~$n$,
\[
  M_{n,\C}=\bigoplus_{p+q=n}M^{p,q}.
\]
For $T\subset\Z^2$, $M$ is said to be of type~$T$, if, for all $(p,q)$
not in~$T$, $M^{p,q}$ is zero. 

A morphism of pure $\Z$-Hodge structures
\[
\begin{tikzcd}
  (M,(M^{p,q})_{p,q}) \ar[r] & (N,(N^{p,q})_{p,q})    
\end{tikzcd}
\]
is a morphism $f\colon M\to N$ of $\Z$-modules
such that for all $(p,q)$ one has $f_\C(M^{p,q})\subset N^{p,q}$.

For $M$ and $N$ pure $\Z$-Hodge structures, $M^\vee$, $M\otimes N$ are
given pure $\Z$-Hodge structures as follows:
\[
(M^\vee)^{p,q} = (M^{-p,-q})^\vee\,,\quad 
(M\otimes N)^{p,q} = \bigoplus_{\substack{a+c=p\\ b+d=q}}
(M^{a,b}\otimes N^{c,d})\,,
\]
and this dictates the rule for~$\Hom(M,N)$:
\[
\Hom(M,N)^{p,q} = (M^\vee\otimes N)^{p,q}
=\bigoplus_{\substack{-a+c=p\\ -b+d=q}}
\Hom(M^{a,b},N^{c,d})\,.
\]

It is convenient to define, for $m$ in~$\Z$, the $\Z$-Hodge structure
$\Z(m)$ of weight $-2m$ as the sub-$\Z$-module $(2\pi i)^m\Z$ of~$\C$,
with $\Z(m)_\C=\Z(m)^{-m,-m}$. For $M$ a pure $\Z$-Hodge structure,
and $m$ in $\Z$, $M(m)$ denotes $M\otimes\Z(m)$. The embedding $(2\pi
i)^m\Z\subset\C$ gives the isomorphisms $\Z(m)_\C=\C$ and
$M(m)_\C=M_\C$.

A \emph{polarisation} on a pure $\Z$-Hodge structure $M$ of weight $n$
is a morphism of pure $\Z$-Hodge structures $\Psi\colon M\otimes
M\to\Z(-n)$ such that for every $(p,q)$ with $p+q=n$ the map
\[
M^{p,q}\times M^{p,q}\to \C,\quad (v,w)\mapsto (-1)^p\Psi(v,\ol{w})
\]
is a complex inner product (that is, for all $(v,w)$,
$\Psi(w,\ol{v})=\ol{\Psi(v,\ol{w})}$, and, for all $v\neq0$,
$(-1)^p\Psi(v,\ol{v})>0$).  The symmetry condition is equivalent to 
$\Psi$ being symmetric if $n$ is even and
antisymmetric if $n$ is odd. The symmetry and positivity conditions are
equivalent to the restriction to $M_\R\times M_\R$ of the
$\C$-bilinear map  
\[
M_\C\times M_\C\to\C\,,\quad (x,y)\mapsto (2\pi i)^n\Psi(x\otimes i{\cdot}y)
\]
with $i$ acting on $M^{p,q}$ as multiplication by $i^{-p}\ol{i}^{-q}$
being  $\R$-valued, symmetric and positive definite.

\subsection{Principally polarised abelian varieties}
\label{sec:ppavs}

Let $d$ be in $\Z_{\geq1}$. Principally polarised complex abelian
varieties of dimension~$d$ are conveniently described as
follows. Their lattice is a free $\Z$-module $M$ of rank $2d$ with a
Hodge structure $M_\C=M^{-1,0}\oplus M^{0,-1}$, and the polarisation
$\Psi\colon M\otimes M\to \Z(1)=2\pi i\Z$ is antisymmetric and induces
an isomorphism $M\to M^\vee(1)$. The abelian variety is then
$M_\C/(M^{0,-1}+M)$. Then $M$ together with $\Psi$ is isomorphic to
$\Z^{2d}$ with $\Psi\colon \Z^{2d}\otimes\Z^{2d}\to\Z(1)$, $x\otimes
y\mapsto 2\pi ix^t(\begin{smallmatrix} 0 & -1\\ 1 &
  0\end{smallmatrix})y$, and such an isomorphism is unique up to
composition with an element of~$\Sp(\Psi)(\Z)$ (the stabiliser of
$\Psi$ in~$\GL_{2d}(\Z)$). Let $(e_1,\ldots,e_{2d})$ be the standard
basis of~$\Z^{2d}$. The subspace $M^{0,-1}$ of~$\C^{2d}$, on which
$(v,w)\mapsto \Psi(v,\ol{w})$ is an inner product, has trivial
intersection with the isotropic subspaces generated by
$e_1,\ldots,e_d$ and $e_{d+1},\ldots,e_{2d}$, hence there is a unique
$\tau$ in $\GL_d(\C)$ such that $M^{0,-1}=\{(\substack{\tau v\\ v}) :
v\in\C^d\}$. As $\Psi$ is a morphism of Hodge structures, $M^{0,-1}$
is isotropic for $\Psi$, giving $\tau^t=\tau$. The positivity of the
complex inner product on $M^{0,-1}$ gives that
$\Im(\tau)=(\tau-\ol{\tau})/2i$ is positive definite. Conversely, for
every $\tau\in\rmM_d(\C)$ with $\tau^t=\tau$ and $\Im(\tau)$ positive
definite, $\tau$ is in $\GL_d(\C)$ and $M^{0,-1}:=\{(\substack{\tau v\\
  v}) : v\in\C^d\}$ gives a Hodge structure on $\Z^{2d}$ such that
$\Psi$ is a principal polarisation.

We conclude: the set $D_\Psi$ of Hodge structures of type
$\{(-1,0),(0,-1)\}$ on $\Z^{2d}$ for which $\Psi$ is a polarisation is
in bijection with the Siegel half space $\H_d$ of symmetric
$\tau\in\rmM_d(\C)$ with $\Im(\tau)$ positive definite, via
$\tau\mapsto M^{0,-1}_\tau:=\{(\substack{\tau v\\ v}) :
v\in\C^d\}$. Note that $\H_d$ is a convex open subset of the set of
symmetric $d$ by $d$ complex matrices. The action of $\Sp(\Psi)(\Z)$
describes the moduli of complex principally polarised abelian
varieties of dimension~$d$: the quotients by suitable congruence
subgroups give fine moduli spaces, and the stacky quotient by
$\Sp(\Psi)(\Z)$ gives the stack of complex principally polarised
abelian varieties of dimension~$d$. Let us write more explicitly the
abelian variety $A_\tau:=\C^{2d}/(M^{0,-1}_\tau+\Z^{2d})$ at $\tau$
in~$\H_d$. The $\C$-linear map $\C^{2d}\to\C^d$, $(\substack{w\\
  v})\mapsto w-\tau v$ is surjective and has kernel~$M^{0,-1}$. So
$A_\tau$ is the cokernel of $(\begin{matrix} 1_d &
  -\tau\end{matrix}){\cdot} \colon\Z^{2d}\to \C^d$, $(\substack{x\\
  y})\mapsto x-\tau y$, that is, $A_\tau$ is the quotient of $\C^d$ by
the lattice generated by $\Z^d$ and the columns of~$\tau$.

For all $M^{0,-1}$ in $D_\Psi$ and all $g$ in $\GL_{2d}(\R)$,
$gM^{0,-1}$ is a Hodge structure of type $\{(-1,0),(0,-1)\}$ for which
$g\Psi$ is a polarisation, where, for all $x,y$ in $\R^{2d}$,
$(g\Psi)(x\otimes y) = \Psi((g^{-1}x)\otimes(g^{-1}y))$.  Hence
$\Sp(\Psi)(\R)$, the subgroup of $\GL_{2d}(\R)$ that preserves~$\Psi$,
acts on~$D_\Psi$.

The following argument shows that this action is transitive. Let
$M^{0,-1}$ be in~$D_\Psi$, and let $v_1,\ldots,v_d$ be an orthonormal
basis for~$M^{0,-1}$. Then the $2d$ elements of~$\R^{2d}$,
$\Re(v_1),\ldots,\Re(v_d),\Im(v_1),\ldots,\Im(v_d)$, form an $\R$-basis
of $\R^{2d}$ with respect to which $M^{0,-1}$ and $\Psi$ do not depend
on~$M^{0,-1}$: indeed, $M^{0,-1}\subset\C^{2d}$ is the $\C$-subspace
generated by
$\Re(v_1)+i\Im(v_1),\ldots, \Re(v_d)+i\Im(v_d)$, and, for every~$j$, we have
that $\Psi(\Re(v_j),\Im(v_j))=i/2$ and all other $\Psi(\Re(v_j),\Im(v_k))$
and $\Psi(\Re(v_j),\Re(v_k))$ and $\Psi(\Im(v_j),\Im(v_k))$ are zero.

In fact a slightly bigger group acts on~$D_\Psi$.  We view $\Psi$ as
an element of the $\R$-vector space
$(\R^{2d}\otimes_\R\R^{2d})^\vee\otimes_\R\R(1)$, on which the group
$\GL_{2d}(\R)\times\R^\times$ acts. An element $(g,\lambda)$ acts as
$(g^{-1}\otimes g^{-1})^\vee\otimes \lambda$. Then $(g,\lambda)$ fixes
$\Psi$ if and only if for all $x,y\in\R^{2d}$,
$\Psi(gx,gy)=\lambda\Psi(x,y)$. We let $\GSp_\Psi(\R)$ be the group of
such $(g,\lambda)$, and $\GSp_\Psi(\R)^+$ the subgroup of the
$(g,\lambda)$ with $\lambda>0$. Then $\GSp_\Psi(\R)^+$ acts on $D_\Psi$
via $M^{0,-1}\mapsto g{\cdot}M^{0,-1}$.

\subsection{Mixed Hodge structures}
\label{sec:mhs}
A \emph{mixed Hodge structure} on a finitely generated $\Z$-module $M$
is the data of an increasing filtration $(W_nM)_{n\in\Z}$ (called the
weight filtration) with $W_nM=M_\tors$ for $n$ small enough and
$W_nM=M$ for $n$ large enough, with all $M/W_nM$ torsion free, and a
decreasing filtration $(F^pM_\C)_{p\in\Z}$ of the $\C$-vector space
$M_\C$, with $F^pM_\C=M_\C$ for small enough $p$ and $F^pM_\C=0$ for
large enough~$p$, such that for each $n$ in $\Z$ the filtration
induced by $F$ on $(\Gr^W_nM)_\C:=((W_nM)/(W_{n-1}M))_\C$ is a Hodge
structure of weight~$n$:
\[
  (\Gr^W_nM)_\C = \bigoplus_{p+q=n} (\Gr^W_nM)^{p,q}_\C,
\]
with
\[
  (\Gr^W_nM)^{p,q}_\C = F^p(\Gr^W_nM)_\C\cap \ol{F^q (\Gr^W_nM)_\C}.
\]
Let us determine all mixed Hodge structures on
$M:=\Z{\cdot}e_1\oplus\Z{\cdot}e_2$, with $W_{-3}(M)=0$,
$W_{-2}(M)=W_{-1}(M)=\Z{\cdot}e_1$ and $W_0(M)=M$, of type
$\{(-1,-1),(0,0)\}$, that is, extensions of $\Z(0)$ by~$\Z(1)$. Then
$F^{-1}M_\C=M_\C$, $F^1M_\C=0$, and $F^0M_\C\cap\C{\cdot}e_1=0$ and
under the quotient map $q\colon M_\C\to M_\C/W_{-1}M_\C=\C{\cdot}e_2$,
$F^0M_\C$ is mapped surjectively. So $F^0M_\C$ is a line, of the form
$L_a:=\C{\cdot}(e_2+ae_1)$ for a unique $a$ in~$\C$, giving a
bijection from $\C$ to the set $D_W$ of mixed Hodge structures of the
type we consider.

Let $P_W(\R)$ be the subgroup of
$\GL_2(\R)\times\GL(\R(1))\times\GL(\R(0))$ that fixes $\R(1)\to\R^2$,
$2\pi i\mapsto e_1$, that fixes $\R^2\to\R(0)$, $(x,y)\mapsto y$, and
that fixes $\R(0)\otimes\R(0)\to\R(0)$, $x\otimes y\mapsto xy$. Then
\[
P_W(\R) = 
\left\{\left(\begin{pmatrix}\lambda & x\\0&1\end{pmatrix}, \lambda, 1\right)
: \text{$\lambda\in\R^\times$, $x\in\R$}\right\}.
\]
By definition $P_W(\R)$ acts on $D_W$, and transported to $\C$ this
action is given by $a\mapsto \lambda a+x$. This action has two orbits:
$\R$ and~$\C-\R$. We would like to have a transitive action (in order
to get a ``connected mixed Shimura datum'' as in~\cite[Def.~2.1]{Pink1}). To get
that, we allow $x$ to be complex, that is, we let $U_W(\C)$ be the subgroup
of $\GL_2(\C)$ of unipotent matrices 
$(\begin{smallmatrix}1 & x\\0&1\end{smallmatrix})$ with $x\in\C$, and
let 
\[
P_W(\R)U_W(\C) = 
\left\{\left(\begin{pmatrix}\lambda & x\\0&1\end{pmatrix}, \lambda, 1\right)
: \text{$\lambda\in\R^\times$, $x\in\C$}\right\}
\]
act on~$D_W$. The action of $P_W(\Z)$ on $\C$ describes the moduli of
mixed $\Z$-Hodge structures that are extension of $\Z(0)$
by~$\Z(1)$. The coarse moduli space is the quotient
\[
  \C\to\C^\times\to \C\,, \quad
  a\mapsto \exp(2\pi ia)\mapsto \exp(2\pi ia)+\exp(-2\pi ia)\,.
\]

\subsection{The universal Poincar\'e torsor as moduli space of mixed
  Hodge structures}
\label{sec:uptams}
Let $d$ be in $\Z_{\geq1}$ and
\[
  M:=\Z(1)\oplus\Z^{2d}\oplus\Z\,,
\]
with
standard basis $2\pi i e_0,e_1,\ldots,e_{2d+1}$, and with the
following filtration: 
\[
\begin{aligned}
W_{-3}M & =\{0\},\quad W_{-2}M=\Z{\cdot}2\pi ie_0, \\
W_{-1}M & =\Z{\cdot}2\pi ie_0\oplus\cdots\oplus\Z{\cdot}e_{2d}, 
\quad W_{0}M=M.    
\end{aligned}
\]
Let $D$ be the set of filtrations $F$ on $M_\C$ such that $(M,W,F)$ is
a mixed $\Z$-Hodge structure of type $\{(-1,-1), (-1,0),(0,-1),
(0,0)\}$, and such that $\Psi\colon (x,y)\mapsto 2\pi i
x^t(\begin{smallmatrix}0 & -1\\1&0\end{smallmatrix})y$ is, via the
given bases, a polarisation on~$\Gr^W_{-1}M$. For $F$ in $D$ we have
$F^{-1}M_\C=M_\C$, and $F^1M_\C=\{0\}$, so $F$ is given
by~$F^0M_\C$. We get a map from $D$ to the set $D_\Psi$
(see Section~\ref{sec:ppavs}) by sending $F^0$ to
$F^0(\Gr^W_{-1}M_\C)$. Recall that we have a bijection $\H_d\to
D_\Psi$ that sends $\tau$ to $M^{0,-1}_\tau=(\substack{\tau \\
  1_d})\C^d\subset\C^{2d}$.

For $m$ and $n$ in $\Z_{\geq0}$ we denote by $\rmM_{m,n}(\C)$ the set
of complex $m$ by $n$ matrices.

\begin{Proposition}\label{prop:paramD}
There is a bijection
$\H_d\times \rmM_{1,d}(\C) \times \rmM_{d,1}(\C)\times \C \lto D$,  
\[
(\tau,u,v,w)\mapsto 
\begin{pmatrix}
u & w\\ \tau & v \\ 1_d & 0\\ 0 & 1
\end{pmatrix}
\C^{d+1} \subset M_\C=\bigoplus_{j=0}^{2d+1}\C e_j\,.
\]
\end{Proposition}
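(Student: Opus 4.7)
The plan is to verify both directions of the claimed bijection by linear algebra, using the previously established bijection $\H_d \to D_\Psi$, $\tau \mapsto M^{0,-1}_\tau$, of Section~\ref{sec:ppavs} to handle the pure weight~$-1$ part. First I would unpack what an element $F \in D$ amounts to. The prescribed type $\{(-1,-1),(-1,0),(0,-1),(0,0)\}$ forces $F^{-1}M_\C = M_\C$ and $F^1 M_\C = 0$, so $F$ is determined by the single subspace $F^0 M_\C$. The induced filtrations on the graded pieces must satisfy $F^0 \Gr^W_{-2} M_\C = 0$, $F^0 \Gr^W_{-1} M_\C = M^{0,-1}_\tau$ for a unique $\tau \in \H_d$ (this is where the polarisation condition enters), and $F^0 \Gr^W_0 M_\C = \Gr^W_0 M_\C$. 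Adding dimensions gives $\dim F^0 M_\C = d+1$, and these conditions translate into: (i) $F^0 \cap W_{-2,\C} = 0$; (ii) $F^0 \cap W_{-1,\C}$ projects isomorphically onto $M^{0,-1}_\tau \subset \Gr^W_{-1}M_\C$; (iii) $F^0$ surjects onto $\Gr^W_0 M_\C$.

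For the map in the statement, I would check that the column span $F^0$ of the displayed matrix satisfies (i)--(iii). The bottom $(d+1) \times (d+1)$ block of the matrix is $\bigl(\begin{smallmatrix}1_d & 0\\ 0 & 1\end{smallmatrix}\bigr)$, so the $d+1$ columns are linearly independent and $\dim F^0 = d+1$. A vector in $F^0$ lies in $W_{-1,\C}$ exactly when the coefficient of the last column in its expansion vanishes; this isolates the first $d$ columns, whose projection to $\C^{2d}$ is the column span of $\bigl(\begin{smallmatrix}\tau\\ 1_d\end{smallmatrix}\bigr)$, namely $M^{0,-1}_\tau$, proving (ii). Requiring the vector to lie in $W_{-2,\C}$ further forces all remaining column coefficients to vanish, proving (i); and the last column has $e_{2d+1}$-coefficient equal to~$1$, proving (iii).

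For the inverse direction, given $F^0 \in D$, I would recover $\tau$ from $F^0 \Gr^W_{-1}M_\C$ by invoking Section~\ref{sec:ppavs}. By (i) and (ii), the projection $F^0 \cap W_{-1,\C} \to M^{0,-1}_\tau$ is an isomorphism, whose inverse sends the standard basis columns of $\bigl(\begin{smallmatrix}\tau\\ 1_d\end{smallmatrix}\bigr)$ to a canonical basis of $F^0 \cap W_{-1,\C}$; their $2\pi i e_0$-coefficients define $u \in \rmM_{1,d}(\C)$. For the extra generator, I would pick any lift of $e_{2d+1}\in\Gr^W_0 M_\C$ to $F^0$ (possible by (iii)) and normalize it uniquely by killing its coefficients along $e_{d+1},\ldots,e_{2d}$; this is achievable and unique because the $1_d$ block in the chosen basis of $F^0 \cap W_{-1,\C}$ identifies these coefficients with the $F^0 \cap W_{-1,\C}$-ambiguity. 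The surviving coefficients along $e_1,\ldots,e_d$ and $2\pi i e_0$ yield $v \in \rmM_{d,1}(\C)$ and $w \in \C$. The two constructions are mutually inverse by design; the only step requiring attention, and the main (mild) obstacle, is careful bookkeeping of the $1,d,d,1$ block structure, since no further verification of the Hodge-conjugation or polarisation conditions is needed, these being entirely captured by $\tau \in \H_d$.
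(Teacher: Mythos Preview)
Your proof is correct and follows essentially the same approach as the paper: both arguments first recover $\tau$ from the induced filtration on $\Gr^W_{-1}M_\C$, then parametrise the lifts of $M^{0,-1}_\tau$ into $W_{-1}(M)_\C$ by $u$ (the first $d$ columns), and finally normalise a lift of $e_{2d+1}$ by killing its $e_{d+1},\ldots,e_{2d}$-coefficients to extract $v$ and~$w$. Your write-up is somewhat more explicit in separating the forward and inverse constructions and in spelling out conditions (i)--(iii), but the underlying argument is the same.
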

\begin{proof}
Let $\tau$ be in~$\H_d$. The $F^0(W_{-1}(M)_\C)$ in the fibre over
$\tau$ are the subspaces of $W_{-1}(M)_\C$
that are mapped isomorphically to the subspace $M^{0,-1}_\tau$
of~$\Gr^W_{-1}(M)_\C$ in the short exact sequence
\[
0 \to  W_{-2}(M)_\C \to W_{-1}(M)_\C \to \Gr^W_{-1}(M)_\C \to 0.
\]
This accounts for the first $d$ columns in the matrix above. We take these
columns as the first $d$ elements of our basis of~$F^0M_\C$.

Each $F^0(M_\C)$ in $D$ that restricts to $F^0(W_{-1}M_\C)$ given by
a $(\tau,u)$ has a unique $d{+}1$th basis vector
$\sum a_i e_i$ ending with $d$ zeros and then a~$1$. This accounts for
the last column.
\end{proof}

Let $P$ be the subgroupscheme of $\GL(M)\times\GL(\Z(1))$ that fixes
$W$, $\Z(1)\to W_{-2}(M)$, $2\pi i a\mapsto 2\pi i ae_0$,
$\Z(0)\to\Gr^W_0(M)$, $a\mapsto ae_{2d+1}$, and $\Psi\colon
\Gr^W_{-1}(M)\otimes \Gr^W_{-1}(M)\to\Z(1)$. Then, for any
$\Z$-algebra $R$ (we will only use $\Z$, $\R$ and~$\C$), we have
\begin{equation}
\label{eq:def_P}
P(R) = 
\left\{
\begin{pmatrix}
\mu(g) & x & z\\
0 & g & y\\
0 & 0 & 1    
\end{pmatrix} :
\begin{array}{c}
\text{$(g,\mu(g))\in\GSp(\Psi)(R)$,}\\ \\
\text{$x\in\rmM_{1,2d}(R)$, $y\in\rmM_{2d,1}(R)$, $z\in R$}
\end{array}
\right\},
\end{equation}
where the matrices are with respect to the $\Z$-basis $2\pi ie_0,
e_1,\ldots,e_{2d+1}$ of~$M$. We let $U$ be the subgroupscheme of $P$
given by 
\[
U(R) = 
\left\{
\begin{pmatrix}
1 & 0 & z\\
0 & 1 & 0\\
0 & 0 & 1    
\end{pmatrix} :
\text{$z\in R$}
\right\}.
\]
We also let $P^u$ be the
unipotent radical of~$P$, that is, 
\[
P^u(R) = 
\left\{
\begin{pmatrix}
1 & x & z\\
0 & 1 & y\\
0 & 0 & 1    
\end{pmatrix} :
\text{$x\in\rmM_{1,2d}(R)$, $y\in\rmM_{2d,1}(R)$, $z\in R$}
\right\},
\]
also known as the Heisenberg group.
Then $P^u$ is a central extension of the vector group $P^u/U$
by~$\Ga$. The commutator pairing on $P^u/U$ sends
$((x,y),(x',y'))$ to $xy'-x'y$.

For $R$ a subring of~$\C$, the matrix with respect to the $\C$-basis
$e_0,\ldots,e_{2d+1}$ of $M_\C$ of the element of $P(R)$ above is
\begin{equation}
\label{eq:P_element}
\begin{pmatrix}
\mu(g) & 2\pi i x & 2\pi iz\\
0 & g & y\\
0 & 0 & 1    
\end{pmatrix}\,.  
\end{equation}
By definition, $P(\R)^+U(\C)$ acts on~$D$. We make this explicit for
elements of $P^u(\R)U(\C)$, with respect to the $\C$-basis
$e_0,\ldots,e_{2d+1}$, writing $2\pi ix= (2\pi ix_1\,\, 2\pi ix_2)$ and
$y=(\substack{y_1\\ y_2})$: 
\begin{multline}\label{eq:P-action}
\begin{pmatrix}
1 & 2\pi ix_1 & 2\pi ix_2 & 2\pi iz\\
0 & 1_d & 0 & y_1\\
0 & 0 & 1_d & y_2\\
0 & 0 & 0 & 1    
\end{pmatrix}
\begin{pmatrix}
u & w\\ \tau & v \\ 1_d & 0\\ 0 & 1
\end{pmatrix}
\C^{d+1} \\ =
\begin{pmatrix}
u+2\pi ix_1\tau + 2\pi ix_2& w+2\pi ix_1v+2\pi iz\\ 
\tau & v + y_1\\ 1_d & y_2\\ 0 & 1
\end{pmatrix}
\C^{d+1} \\ =
\begin{pmatrix}
u+2\pi ix_1\tau + 2\pi ix_2& w+2\pi ix_1v+2\pi iz\\ 
\tau & v + y_1\\ 1_d & y_2\\ 0 & 1
\end{pmatrix}
\begin{pmatrix}
1_d & -y_2\\ 0 & 1
\end{pmatrix}
\C^{d+1} \\ =  
\begin{pmatrix}
u+2\pi ix_1\tau + 2\pi ix_2& w+2\pi ix_1v+2\pi iz-(u+2\pi ix_1\tau + 2\pi ix_2)y_2\\ 
\tau & v + y_1 -\tau y_2\\ 
1_d & 0
\\ 0 & 1
\end{pmatrix}
\C^{d+1}\,.
\end{multline}

As the action of $\Sp_\Psi(\R)$ on $D_\Psi$ is transitive, we conclude
that the action of $P(\R)^+U(\C)$ on $D$ is transitive. We also write
out the action of $\GSp_\Psi(\R)^+$ on~$D$:
\begin{multline}\label{eq:GSp-action}
\begin{pmatrix}
\mu & 0 & 0 & 0\\
0 & a & b & 0\\
0 & c & d & 0\\
0 & 0 & 0 & 1    
\end{pmatrix}
\begin{pmatrix}
u & w\\ \tau & v \\ 1_d & 0\\ 0 & 1
\end{pmatrix}
\C^{d+1} =
\begin{pmatrix}
\mu u & \mu w\\ a\tau +b & av \\ c\tau+ d & cv\\ 0 & 1
\end{pmatrix}
\begin{pmatrix}
(c\tau+d)^{-1} & 0 \\ 0 & 1
\end{pmatrix}
\C^{d+1}\\
 = 
\begin{pmatrix}
\mu u (c\tau+d)^{-1} & \mu w\\ (a\tau +b) (c\tau+d)^{-1} & av \\ 1_d & cv\\ 0 & 1
\end{pmatrix}
\begin{pmatrix}
1_d & -cv \\0 & 1
\end{pmatrix}
\C^{d+1}\\ = 
\begin{pmatrix}
\mu u (c\tau+d)^{-1} & \mu w-\mu u (c\tau+d)^{-1}cv\\ 
(a\tau +b) (c\tau+d)^{-1} & av -(a\tau +b) (c\tau+d)^{-1} cv\\ 1_d & 0\\ 0 & 1
\end{pmatrix} \C^{d+1}\,.
\end{multline}

\begin{Proposition}\label{prop:MSV_and_PT}
The quotient $P^u(\Z)\backslash D$ is the universal Poincar\'e torsor over~$\H_d$.
\end{Proposition}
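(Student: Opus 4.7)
The approach is to directly analyse the quotient using the explicit parametrisation of $D$ from Proposition~\ref{prop:paramD} and then interpret the result through mixed Hodge structures. By that parametrisation, $D$ is identified with $\H_d \times \rmM_{1,d}(\C) \times \rmM_{d,1}(\C) \times \C$, and from~\eqref{eq:P-action} the action of $P^u(\R)U(\C)$ fixes $\tau$. Hence $P^u(\Z)$ acts fibrewise over $\H_d$, and it suffices to identify, for each fixed $\tau \in \H_d$, the quotient of the fibre $\rmM_{1,d}(\C)_u \times \rmM_{d,1}(\C)_v \times \C_w$ by $P^u(\Z)$ with the fibre over $\tau$ of the universal Poincar\'e torsor.

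Specialising~\eqref{eq:P-action} to integer parameters, the pure $(y_1,y_2)$-action $v \mapsto v + y_1 - \tau y_2$ quotients the $v$-coordinate to the abelian variety $A_\tau = \C^d / (\Z^d + \tau \Z^d)$, the fibre at $\tau$ of the universal abelian scheme over $\H_d$. The pure $(x_1,x_2)$-action $u \mapsto u + 2\pi i(x_1 \tau + x_2)$ quotients the $u$-coordinate, after dividing by $2\pi i$, to the dual abelian variety $A_\tau^\vee$ viewed through row vectors, which $\Psi$ identifies with $A_\tau$. The pure $z$-action $w \mapsto w + 2\pi iz$ quotients $\C_w$ to $\C^\times$ via exponentiation. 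Set-theoretically the quotient is therefore a $\C^\times$-bundle over $A_\tau \times_\C A_\tau^\vee$.

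The cross-terms $2\pi i x_1 v$ and $-(u + 2\pi i(x_1 \tau + x_2)) y_2$ in~\eqref{eq:P-action} show that this $\C^\times$-bundle is non-trivial: it carries a bi-extension structure inherited from the Heisenberg group $P^u$. To identify it with the Poincar\'e torsor, the cleanest route is Hodge-theoretic. A point of $P^u(\Z) \backslash D$ over $\tau$ is the isomorphism class of a mixed Hodge structure $M$ with graded pieces $(\Z(1), H_\tau, \Z(0))$, where $H_\tau$ is the weight $-1$ Hodge structure on $\Z^{2d}$ determined by $\tau$. Such an $M$ carries a sub-extension class $[W_{-1}M] \in \Ext^1_{\mathrm{MHS}}(H_\tau, \Z(1)) \simeq A_\tau^\vee$ and a quotient-extension class $[M/\Z(1)] \in \Ext^1_{\mathrm{MHS}}(\Z(0), H_\tau) \simeq A_\tau$; once these are fixed, $M$ itself lies in $\Ext^1_{\mathrm{MHS}}(\Z(0), W_{-1}M)$. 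Using $\Ext^1_{\mathrm{MHS}}(\Z(0),\Z(1)) \simeq \C^\times$ together with the vanishing of the relevant $\Hom$ and $\Ext^2$, the long exact $\Ext$-sequence yields
\[
0 \to \C^\times \to \Ext^1_{\mathrm{MHS}}(\Z(0), W_{-1}M) \to A_\tau \to 0,
\]
and the middle term, by its universal property, is the extension of $A_\tau$ by $\Gm$ classified by $[W_{-1}M] \in A_\tau^\vee$: the fibre of the Poincar\'e torsor.

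The main obstacle is verifying that the $\Gm$-torsor produced by the Heisenberg quotient, with its bi-extension structure read off from the commutator on $P^u$, genuinely coincides with the Poincar\'e torsor equipped with its standard bi-extension structure. One may either compute the commutator on $P^u/U$ directly and match it with the Weil pairing defining the Poincar\'e bi-extension, or invoke the universal property of mixed Hodge structures as above, in which case the task reduces to checking that the $u$- and $v$-columns of the matrix in Proposition~\ref{prop:paramD} truly represent $[W_{-1}M]$ and $[M/\Z(1)]$ under the identifications $\Ext^1 \simeq A^\vee$ and $\Ext^1 \simeq A$. The remaining globalisation from fibres to a morphism over $\H_d$ is essentially formal, since the parametrisation in Proposition~\ref{prop:paramD} varies holomorphically in $\tau$.
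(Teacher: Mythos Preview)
Your approach is genuinely different from the paper's and is sound in outline, but it stops short at the crucial step. The paper does \emph{not} argue via $\Ext^1_{\mathrm{MHS}}$ at all: it constructs the universal Poincar\'e torsor over~$\H_d$ directly, as the universal extension of $A_\tau$ by $\C^\times$, by passing to universal covers of complex Lie groups. Concretely, extensions $0\to\C^\times\to E\to V/L\to0$ are written as cokernels~(\ref{eq:cokernels}) parametrised by $\alpha\in L_\C^\vee$, and the universal family is the quotient of $L_\C^\vee\times V\times\C$ by explicit joint actions of $V^\vee$, $L^\vee(1)$, $L$, $\Z(1)$. Specialising to the family over~$\H_d$ and taking the quotient by $V^\vee=\rmM_{1,d}(\C)$ first gives coordinates $(u,v,w)$, and the remaining three actions are computed in closed form as~(\ref{eq:action_m}) and~(\ref{eq:action_n}). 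These are then matched, element by element via~(\ref{eq:equiv-actions}), with the $P^u(\Z)$-action on~$D$ from~(\ref{eq:P-action}). No appeal to the category of mixed Hodge structures or to bi-extension universal properties is made.

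Your Ext-sequence argument is conceptually pleasant, but you yourself flag the gap: you never establish that the extension $0\to\C^\times\to\Ext^1_{\mathrm{MHS}}(\Z(0),W_{-1}M)\to A_\tau\to 0$ is the one classified by $[W_{-1}M]\in A_\tau^\vee$ under the \emph{standard} identification $\Ext^1(A_\tau,\Gm)\cong A_\tau^\vee$, nor that the coordinates $u$, $v$ from Proposition~\ref{prop:paramD} realise the extension classes under Carlson's formula. Saying one ``may either compute the commutator \dots\ or invoke the universal property'' is not a proof; either route requires exactly the kind of coordinate bookkeeping the paper carries out. A second cost of your approach is that the paper's explicit formulas~(\ref{eq:action_m}),~(\ref{eq:action_n}),~(\ref{eq:equiv-actions}) are used repeatedly downstream (Section~\ref{sec4.7}, Proposition~\ref{prop_tors_prop_r_f_msv}, the description of~$r^\Sh_f$), so even if you completed the abstract argument you would still owe these computations.
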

\begin{proof}
  We prove this by showing that the universal extension of the
  universal abelian variety over $\H_d$ by $\C^\times$ is uniformised
  in exactly the same way when we express everything in terms of
  matrices. We view $\rmM_{1,d}(\C)$ and $\rmM_{d,1}(\C)$ as duals via
  the matrix multiplication (row times column).

Let us first consider a complex torus $A=V/L$, and an extension of
complex Lie groups
\[
0 \to \C^\times \to E \to A \to 0\,.
\]
Passing to universal covers gives us an extension of $\C$-vector
spaces
\[
0 \to \C \to \tilde{E} \to V \to 0\,,
\]
mapping to the previous sequence by exponential maps. The kernels of
these maps form an extension
\[
0 \to \Z(1) \to M \to L \to 0\,.
\]
The extensions of $V$ by $\C$ and of $L$ by $\Z(1)$ admit splittings,
and these are unique up to $V^\vee:=\Hom_\C(V,\C)$ and
$\Hom_\Z(L,\Z(1))=L^\vee(1)$. It follows that all extensions of $A$ by
$\C^\times$ are obtained as cokernels of maps
\begin{equation}\label{eq:cokernels}
\begin{aligned}
  \Z(1)\oplus L & \to \C\oplus V\,,\\
(2\pi in,m)& \mapsto (2\pi i n - \alpha(m),m),\quad \text{with
  $\alpha\in\Hom_\Z(L,\C)=L_\C^\vee$.}     
\end{aligned}
\end{equation}
Our reason for choosing $2\pi i n-\alpha(m)$ in the line above, and
not $2\pi i n+\alpha(m)$, is to avoid a sign in the isomorphism under
construction between our universal extension here and that given by
$P^u(\Z)\backslash D$; see the term $-uy_2$ in the upper right
coefficient in the last matrix in~(\ref{eq:P-action}).

More explicitly, over $L_\C^\vee$ we have a family of extensions, with
fibre at $\alpha$ the cokernel above. This family is universal for
extensions with given splitting of their tangent spaces at~$0$ and
given splitting of the kernel of the exponential map. On it, we have
actions of $V^\vee$ and $L^\vee(1)$, the quotient by which gives us
the universal extension of $A$ by $\C^\times$, with base
$L_\C^\vee/(V^\vee+L^\vee(1))$, which is therefore the dual complex
torus. The family itself is the quotient of $L_\C^\vee\times
V\times\C$ by a joint action of $V^\vee$, $L^\vee(1)$, $L$
and~$\Z(1)$.  By ``joint action'' we mean that the actions of the
individual elements of these four groups taken in this order induce a
group structure on $V^\vee\times L^\vee(1)\times L\times\Z(1)$ and an
action by that group on $L_\C^\vee\times V\times\C$. We make this more
explicit for the family over~$\H_d$.

Let $\tau$ be in~$\H_d$. As in Section~\ref{sec:ppavs} we have 
\[
\begin{aligned}
  A_\tau& =\C^{2d}/((\substack{\tau\\ 1_d})\C^d+\Z^{2d}) =
  \C^d/((1_d\,\,-\tau)\Z^{2d}) \\
  & = \rmM_{d,1}(\C)/((1_d\,\,-\tau)\rmM_{2d,1}(\Z)).    
\end{aligned}
\]
The universal extension of $A_\tau$ by $\C^\times$ is the quotient of
$\rmM_{1,2d}(\C)\times \rmM_{d,1}(\C)\times\C$ by the
joint actions of the groups 
$\rmM_{1,d}(\C)$, $\rmM_{2d,1}(\Z)$, $\rmM_{1,2d}(\Z(1))$,
and~$\Z(1)$.
We admit that this is not the same order as a few lines above, but the
rest of the proof shows that once the quotient by $\rmM_{1,d}(\C)$ has
been taken, the remaining three groups match the corresponding pieces of
the Heisenberg group, and therefore the order in which we consider their
actions is irrelevant.

An element $l$ in $\rmM_{1,d}(\C)$ acts by postcomposing the embedding
of $\Z(1)\oplus\rmM_{2d,1}(\Z)$ in $\C\oplus \rmM_{d,1}(\C)$ as
in~(\ref{eq:cokernels}) with 
\[
\begin{pmatrix}
w \\v
\end{pmatrix}
\mapsto
\begin{pmatrix}
1 & l\\0 &1_d
\end{pmatrix}
\begin{pmatrix}
w \\v
\end{pmatrix}
=
\begin{pmatrix}
w +lv\\v
\end{pmatrix}
\]
giving the embedding
\[
\begin{aligned}
\begin{pmatrix}
2\pi in\\ m_1\\ m_2
\end{pmatrix}
& \mapsto
\begin{pmatrix}
1 & l\\0 & 1_d
\end{pmatrix}
\begin{pmatrix}
1 & -\alpha_1 & -\alpha_2\\0 & 1_d & -\tau
\end{pmatrix}{\cdot}
\begin{pmatrix}
2\pi in\\ m_1\\ m_2
\end{pmatrix} \\
& =
\begin{pmatrix}
1 & -\alpha_1+l & -\alpha_2-l\tau\\0 & 1_d & -\tau
\end{pmatrix}{\cdot}
\begin{pmatrix}
2\pi in\\ m_1\\ m_2
\end{pmatrix}\,.    
\end{aligned}
\]
The two displayed formulas above give the actions of $l$ on $(v,w)$ in
$\rmM_{d,1}(\C)\times\C$ and on $(\alpha_1,\alpha_2)$ in
$\rmM_{1,2d}(\C)$, and therefore the action on
$\rmM_{1,2d}(\C)\times\rmM_{d,1}(\C)\times\C$
\[
l\colon (\alpha_1,\alpha_2,v,w)\mapsto (\alpha_1-l,\alpha_2+l\tau,v,w+l(v)).
\]
We make a quotient map for this action as follows. For every
$(\alpha_1,\alpha_2,v,w)$ there is a unique $l$, namely, $\alpha_1$,
that brings it to the subset of all $(0,\alpha_2,v,w)$.  This gives us
the quotient map
\[
\begin{aligned}
q\colon & \rmM_{1,2d}(\C)\times\rmM_{d,1}(\C)\times\C \to 
\rmM_{1,d}(\C)\times\rmM_{d,1}(\C)\times\C,\\
& (\alpha_1,\alpha_2,v,w)\mapsto (\alpha_1\tau+\alpha_2,v,w+\alpha_1v)\,,
\end{aligned}
\]
whose target is the source at $\tau$ of the bijection in Proposition~\ref{prop:paramD}.
Now we consider the other actions and push them to this quotient.

At the point $(\alpha_1\,\,\alpha_2)$ in
$\rmM_{1,2d}(\C)$ the embedding of $\Z(1)\oplus\rmM_{2d,1}(\Z)$ in
$\C\oplus \rmM_{d,1}(\C)$ is
\[
\begin{pmatrix}
2\pi in\\ m_1\\ m_2
\end{pmatrix}
\mapsto
\begin{pmatrix}
1 & -\alpha_1 & -\alpha_2\\0 & 1_d & -\tau
\end{pmatrix}{\cdot}
\begin{pmatrix}
2\pi in\\ m_1\\ m_2
\end{pmatrix}\,,
\]
and therefore $(2\pi in, (\substack{m_1\\m_2}))$ in
$\Z(1)\times\rmM_{2d,1}(\Z)$ acts on $\rmM_{1,2d}(\C)\times
\rmM_{d,1}(\C)\times\C$ by the translations
\[
\begin{tikzcd}
(\alpha_1,\alpha_2,v,w) \arrow[rr, mapsto,"{(2\pi in, (\substack{m_1\\m_2}))}"]&&
(\alpha_1,\alpha_2,v+m_1-\tau m_2,w+2\pi in-\alpha_1m_1-\alpha_2m_2)\,.
\end{tikzcd}
\]
It follows that $2\pi i n$ and $(\substack{m_1\\m_2})$ act on
$\rmM_{1,d}(\C)\times\rmM_{d,1}(\C)\times\C$ by
\begin{equation}
\label{eq:action_m}
\begin{aligned}
2\pi i n\colon (u,v,w) & \mapsto (u,v,w+2\pi i n)\,,\\
(\substack{m_1\\m_2})\colon (u,v,w) & \mapsto (u,v+m_1-\tau m_2,w-um_2)\,.  
\end{aligned}
\end{equation}
An element $2\pi i(n_1\,\, n_2)$ in $\rmM_{1,2d}(\Z(1))$ acts by precomposing
the embedding
\[
\begin{tikzcd}
\Z(1)\oplus\rmM_{2d,1}(\Z) \ar[r,hook] & \C\oplus \rmM_{d,1}(\C)
\end{tikzcd}
\]
with
\[
\begin{aligned}
\begin{pmatrix}
2\pi in \\ m_1\\ m_2
\end{pmatrix}
& \mapsto
\begin{pmatrix}
1 & -2\pi in_1 & -2\pi i n_2\\
0 & 1_d & 0 \\
0 & 0 & 1_d
\end{pmatrix} {\cdot}
\begin{pmatrix}
2\pi in \\ m_1\\ m_2
\end{pmatrix} \\
& =
\begin{pmatrix}
2\pi i(n - n_1m_1 - n_2m_2) \\ m_1\\ m_2
\end{pmatrix}\,.
\end{aligned}
\]
where we have introduced a factor $-1$ because we want a left
action. This gives the embedding
\[
\begin{aligned}
\begin{pmatrix}
2\pi in \\ m_1\\ m_2
\end{pmatrix}
& \mapsto
\begin{pmatrix}
1 & -\alpha_1 & -\alpha_2\\0 & 1_d & -\tau
\end{pmatrix}{\cdot}
\begin{pmatrix}
1 & -2\pi in_1 & -2\pi i n_2\\
0 & 1_d & 0 \\
0 & 0 & 1_d
\end{pmatrix} {\cdot}
\begin{pmatrix}
2\pi in \\ m_1\\ m_2
\end{pmatrix} \\
& = 
\begin{pmatrix}
1 & -\alpha_1-2\pi i n_1 & -\alpha_2-2\pi i n_2\\0 & 1_d & -\tau
\end{pmatrix}{\cdot}
\begin{pmatrix}
2\pi in \\ m_1\\ m_2
\end{pmatrix}\,.
\end{aligned}
\]
So the identity on $\C\oplus\rmM_{d,1}(\C)$ and the inverse of the
action of $2\pi i(n_1\,\, n_2)$ on $\Z(1)\oplus\rmM_{2d,1}(\Z)$ induce
an isomorphism from the extension at $(\alpha_1,\alpha_2)$ to the
extension at $(\alpha_1+2\pi i n_1, \alpha_2+2\pi i n_2)$. 
Therefore the action of $2\pi i(n_1\,\, n_2)$ in $\rmM_{1,2d}(\Z(1))$
on $\rmM_{1,2d}(\C)\times \rmM_{d,1}(\C)\times\C$ is by the translations
\[
2\pi i(n_1\,\, n_2)\colon (\alpha_1,\alpha_2,v,w)\mapsto
(\alpha_1+2\pi i n_1, \alpha_2+2\pi i n_2, v, w).
\]
Pushing this to the quotient gives
\begin{equation}
\label{eq:action_n}
2\pi i(n_1\,\, n_2)\colon (u,v,w)\mapsto 
(u+2\pi i n_1\tau+2\pi i n_2, v, w+2\pi i n_1v)\,.
\end{equation}
By inspection, one sees that the bijection in
Proposition~\ref{prop:paramD} is equivariant for the actions on its
source by $\rmM_{2d,1}(\Z)$, $\rmM_{1,2d}(\Z(1))$, and~$\Z(1)$ given
in (\ref{eq:action_m}) and (\ref{eq:action_n}) and the action on its 
target by $P^u(\Z)$ given in (\ref{eq:P-action}), where $2\pi i n$
in~$\Z(1)$, $(\substack{m_1\\m_2})$ in $\rmM_{2d,1}(\Z)$ and $2\pi
i(n_1\,\, n_2)$ in~$\rmM_{1,2d}(\Z(1))$ respectively correspond to
\begin{equation}\label{eq:equiv-actions}
\begin{pmatrix}
1 & 0 & 2\pi in\\
0 & 1_{2d} & 0\\
0 & 0 & 1    
\end{pmatrix}\,,\quad
\begin{pmatrix}
1 & 0 & 0 & 0\\
0 & 1_d & 0 & m_1\\
0 & 0 & 1_d & m_2\\
0 & 0 & 0 & 1    
\end{pmatrix}\,,\quad
\begin{pmatrix}
1 & 2\pi in_1 & 2\pi in_2 & 0\\
0 & 1_d & 0 & 0\\
0 & 0 & 1_d & 0\\
0 & 0 & 0 & 1    
\end{pmatrix}\,.
\end{equation}
This finishes our identification of $P^u(\Z)\backslash D$ with the
universal Poincar\'e torsor over the Siegel space~$\H_d$.
\end{proof}

\subsection{Duality and the Poincar\'e torsor}\label{sec4.7}
Proposition~\ref{prop:MSV_and_PT} together with the
equations~(\ref{eq:P-action}) give us an explicit description of the
Poincar\'e torsor over~$\H_d$.  Let $\tau$ be in~$\H_d$. Then we have
(as in Section~\ref{sec:ppavs})
$A_\tau=\rmM_{d,1}(\C)/ (\begin{matrix} 1_d &
  -\tau\end{matrix}){\cdot}\rmM_{2d,1}(\Z)$ (see the 2nd column of the
last matrix in~(\ref{eq:P-action}), and
$B_\tau=\rmM_{1,d}(\C)/\rmM_{1,2d}(\Z(1)){\cdot}(\substack{\tau\\1_d})$
(consider the first row), and the Poincar\'e torsor $\calP_\tau$ on
$A_\tau\times B_\tau$ that is the universal extension of $A_\tau$ by
$\C^\times$ and of $B_\tau$ by~$\C^\times$, giving isomorphisms
$B_\tau=\Ext^1(A_\tau,\C^\times)=A_\tau^\vee$ and
$A_\tau=\Ext^1(B_\tau,\C^\times)=B_\tau^\vee$.

Let now $f\colon B_\tau\to A_\tau$ be a morphism of abelian
varieties. Then $f$ is given by a complex linear map 
\[
\rmM_{1,d}(\C) \lto \rmM_{d,1}(\C),\quad u\mapsto f_\C{\cdot}u^t,
\quad\text{with $f_\C$ in $\rmM_d(\C)$}\,,
\]
and a $\Z$-linear map
\[
\rmM_{1,2d}(\Z(1)) \lto \rmM_{2d,1}(\Z),\quad 
2\pi i (\begin{matrix} n_1 & n_2\end{matrix})\mapsto 
f_\Z{\cdot}\begin{pmatrix} n_1^t\\ n_2^t\end{pmatrix}\,,
\]
with
\[
  f_\Z = 
\begin{pmatrix} \alpha & \beta \\\gamma & \delta\end{pmatrix} 
\in\rmM_{2d}(\Z)\,.
\]
The fact that these form a commutative diagram
\[
\begin{tikzcd}
\rmM_{1,2d}(\Z(1)) \ar[r,"{\cdot}(\substack{\tau\\ 1_d})"] 
\ar[d,"f_\Z"] & \rmM_{1,d}(\C) \ar[d,"f_\C"]\\
\rmM_{2d,1}(\Z)\ar[r, "(1_d\ -\tau){\cdot}"'] & \rmM_{d,1}(\C)
\end{tikzcd}
\]
gives us 
\begin{equation}\label{eq:fC_andf_Z}
2\pi i f_\C\tau^t = \alpha-\tau\gamma,\quad\text{and}\quad
2\pi i f_\C = \beta-\tau\delta\,.  
\end{equation}
The morphism $f\colon B_\tau\to A_\tau$ gives us the dual
$f^\vee\colon B_\tau\to A_\tau$. We want to know what $(f^\vee)_\C$
and $(f^\vee)_\Z$ are. The following proposition answers this
question.

\begin{Proposition}\label{prop:symm_on_Z}
In the situation above, we have
\[
  (f^\vee)_\Z = -(f_\Z)^t\,, \quad \text{and} \quad
  (f^\vee)_\C = \frac{1}{2\pi i}(-\gamma^t+\tau\delta^t)\,.
\]
\end{Proposition}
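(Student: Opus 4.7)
The plan is to first establish $(f^\vee)_\Z=-(f_\Z)^t$ on first homology, from which the formula for $(f^\vee)_\C$ follows immediately by applying the compatibility~(\ref{eq:fC_andf_Z}) to $f^\vee$ itself. Indeed, $f^\vee$ is a morphism of abelian varieties $B_\tau\to A_\tau$ just like $f$, so writing $(f^\vee)_\Z=\begin{pmatrix}\alpha'&\beta'\\ \gamma'&\delta'\end{pmatrix}$ the same argument gives $2\pi i(f^\vee)_\C=\beta'-\tau\delta'$. Substituting $\beta'=-\gamma^t$ and $\delta'=-\delta^t$ (these being the off-diagonal blocks of $-(f_\Z)^t$) yields the claimed $(f^\vee)_\C=\tfrac{1}{2\pi i}(-\gamma^t+\tau\delta^t)$. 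So the whole content of the proposition is the first identity.

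To prove $(f^\vee)_\Z=-(f_\Z)^t$, the idea is to translate the defining property of the dual morphism, recalled in~(\ref{eq:isombiexts}), into an identity on first homology. Taking $A_1=B_\tau$ and $A_2=A_\tau$, and using the canonical identification $\calP_{B_\tau}=\sigma^*\calP_\tau$ arising from biduality $B_\tau^\vee=A_\tau$, one obtains $\calP_\tau(f^\vee y,x)=\calP_\tau(fx,y)$ for $x,y\in B_\tau$. The biextension $\calP_\tau$ induces on first homology a pairing $L_{A_\tau}\otimes L_{B_\tau}\to\Z(1)$ that can be read off from the uniformisation of Section~\ref{sec:uptams}; concretely it is the commutator pairing of the Heisenberg group $P^u/U$ (whose commutator is $((x,y),(x',y'))\mapsto xy'-x'y$), and amounts to matrix multiplication. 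Plugging the matrix descriptions $f_*(2\pi in)=f_\Z n^t$ and $(f^\vee)_*(2\pi in')=(f^\vee)_\Z(n')^t$ into the biextension identity, and transposing one side (which is trivial on $1\times 1$ matrices), yields the desired relation between $(f^\vee)_\Z$ and $(f_\Z)^t$.

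The main subtle point will be pinning down the sign: the naive evaluation of the pairing tends to yield $+(f_\Z)^t$, whereas the correct answer is $-(f_\Z)^t$. The missing sign comes from the antisymmetry of the Heisenberg commutator pairing together with the swap $\sigma$ entering the identification $\calP_{B_\tau}\cong\sigma^*\calP_\tau$ of Poincar\'e torsors under biduality. A clean sanity check that the sign is indeed minus is the case $f=\lambda^{-1}$ for a principal polarisation: then $\lambda^\vee=\lambda$ forces $f^\vee=f$, so the identity reduces to the antisymmetry $f_\Z=-(f_\Z)^t$. A direct computation from $\Psi$ indeed gives $\lambda^{-1}_\Z=\begin{pmatrix}0&-1_d\\ 1_d&0\end{pmatrix}$, which is antisymmetric, confirming the sign.
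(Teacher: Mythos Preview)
Your approach is valid but inverts the paper's order of argument. The paper computes $(f^\vee)_\C$ first, by explicitly writing down the morphism of extensions
\[
\begin{tikzcd}
\C^\times \ar[r,hook] & \calP_{\tau,a} \ar[r,two heads] \ar[d] & B_\tau \ar[d,"f"]\\
\C^\times \ar[r,hook] & \calP_{\tau,b} \ar[r,two heads] & A_\tau
\end{tikzcd}
\]
at the level of uniformising vector spaces: one sets up a commutative square between the lattice embeddings at $b$ (with lift $u$) and at $a$ (with unknown lift $v$), and solves the resulting linear equations for~$v$. This gives $(f^\vee)_\C=(2\pi i)^{-1}(-\gamma^t+\tau\delta^t)$ directly, and only afterwards does the paper read off $(f^\vee)_\Z=-(f_\Z)^t$ from the compatibility diagram. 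You instead aim to get $(f^\vee)_\Z$ first via the adjointness of $f$ and $f^\vee$ for the homology pairing coming from the biextension, and then recover $(f^\vee)_\C$ from~(\ref{eq:fC_andf_Z}); that reduction is correct.

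The gap in your plan is precisely the one you flag: you never actually derive the sign. Saying that it ``comes from the antisymmetry of the Heisenberg commutator together with the swap $\sigma$'' is the right intuition, but to make it a proof you must pin down, in the paper's explicit coordinates, (i)~the pairing $H_1(A_\tau)\times H_1(B_\tau)\to\Z(1)$ induced by $\calP_\tau$ (it is $(m,2\pi i n)\mapsto 2\pi i\,n m$, read off from~(\ref{eq:action_m})--(\ref{eq:action_n}) or the commutator in $P^u$), (ii)~the pairing $H_1(B_\tau)\times H_1(A_\tau)\to\Z(1)$ coming from $\sigma^*\calP_\tau$ and the biduality identification $B_\tau^\vee=A_\tau$, and (iii)~verify that these differ by a sign, so that the standard transpose relation $(f^\vee)_*=(f_*)^t$ (valid between $H_1(A_\tau^\vee)$ and $H_1(B_\tau^\vee)$) becomes $(f^\vee)_\Z=-(f_\Z)^t$ after the identifications. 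Your sanity check with $\lambda^{-1}$ confirms the answer but does not replace this derivation; incidentally, with the paper's conventions one finds $(\lambda_\tau^{-1})_\Z=\left(\begin{smallmatrix}0&1_d\\-1_d&0\end{smallmatrix}\right)$ rather than its negative, though of course both are antisymmetric so your check survives. The virtue of the paper's direct computation is that it bypasses this sign-tracking entirely: the answer for $(f^\vee)_\C$ drops out of a linear system, and the integral statement is then a verification rather than a derivation.
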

\begin{proof}
Let $b\in B_\tau$. By the rigidity of extensions of abelian varieties
by~$\Gm$,  $f^\vee(b)$ is the unique $a\in A_\tau$ such
that there is a morphism of extensions
\[
\begin{tikzcd}
\C^\times \ar[d, equal] \ar[r, hook] & \calP_{\tau,b} \ar[r, two heads] & A_\tau \\
\C^\times \ar[r, hook] & \calP_{\tau,a} \ar[r, two heads] \ar[u] &
B_\tau \ar[u, "f"']
\end{tikzcd}\,.
\] 
Let $u\in\rmM_{1,d}(\C)$ be an element that maps to~$b$. Then we are
looking for a $v$ in $\rmM_{d,1}(\C)$ (mapping to~$a$), $b_1$ and
$b_2$ in $\rmM_{d,1}(\Z)$, and $y$ in $\rmM_{d,1}(\C)$ such that the
diagram
\[
\begin{tikzcd}
(2\pi i n, 2\pi i(n_1\ n_2)) \ar[r, mapsto]
& \left(2\pi i (n + (n_1\ n_2){\cdot}(\substack{b_1\\ b_2})), 
f_\Z{\cdot}\left(\substack{n_1^t\\ n_2^t}\right)\right) \\
\Z(1) \oplus \rmM_{1,2d}(\Z(1)) \ar[r] 
\ar[dd,"{\cdot}\left(
\substack{1\  0\\ v\ \tau\\ 0\ 1_d}\right)"'] 
& \Z(1) \oplus \rmM_{2d,1}(\Z) 
\ar[dd, "\left(\substack{1\ 0\ -u\\ 0\ 1_d\ -\tau}\right){\cdot}"]\\ \\
\C \oplus \rmM_{1,d}(\C) \ar[r] & \C \oplus \rmM_{d,1}(\C) \\
(z,x) \ar[r,mapsto] & (z+x{\cdot}y, f_\C{\cdot}x^t)
\end{tikzcd}
\]
is commutative. This commutativity is equivalent to: for all
$n_1$ and $n_2$ in $\rmM_{1,d}(\Z)$
\[
2\pi i (n_1{\cdot}(v+\tau{\cdot}y) + n_2{\cdot}y) = 
2\pi i (n_1{\cdot}b_1+n_2{\cdot}b_2) -u{\cdot}(\gamma{\cdot} n_1^t + \delta{\cdot} n_2^t)\,,
\]
which in turn is equivalent to:
\[
2\pi i (v+\tau{\cdot}y) =  2\pi i b_1 -\gamma^t{\cdot}u^t 
\quad\text{and}\quad
2\pi i y = 2\pi i b_2 -\delta^t{\cdot}u^t\,.
\]
We solve this by taking
\[
b_1=0,\quad b_2=0, \quad 
y = -(2\pi i)^{-1}\delta^t{\cdot}u^t,\quad 
v = (2\pi i)^{-1} (-\gamma^t{\cdot}u^t + \tau{\cdot}\delta^t{\cdot}u^t)\,.
\]
We conclude that $f^\vee\colon B_\tau\to A_\tau$ is given by  
\[
\rmM_{1,d}(\C)\to\rmM_{d,1}(\C), \quad u\mapsto
(f^\vee)_\C{\cdot}u^t\,,
\]
with
\[
  (f^\vee)_\C = (2\pi i)^{-1} (-\gamma^t + \tau{\cdot}\delta^t)\,.
\]
The fact that $(f^\vee)_\Z$ is as claimed follows from the
commutativity of the diagram
\[
\begin{tikzcd}
2\pi i(n_1\ n_2) \ar[r, mapsto] \ar[ddd, mapsto, bend right=60] & 
-(\begin{smallmatrix} \alpha^t & \gamma^t\\ \beta^t &
  \delta^t\end{smallmatrix})
{\cdot}(\substack{n_1^t\\ n_2^t}) \ar[ddd, mapsto, bend left=60]\\
\rmM_{1,2d}(\Z(1)) \ar[r] \ar[d,"{\cdot}(\substack{\tau\\ 1_d})"'] & 
\rmM_{2d,1}(\Z) \ar[d,"(1_d\ -\tau){\cdot}"] \\
\rmM_{1,d}(\C) \ar[r] & \rmM_{d,1}(\C)\\
2\pi i (n_1{\cdot}\tau + n_2)\ar[r,mapsto] & 
(-\gamma^t + \tau{\cdot}\delta^t){\cdot}(\tau^t{\cdot}n_1^t+n_2^t)\,.
\end{tikzcd}
\]
To establish this commutativity one uses~(\ref{eq:fC_andf_Z}). 
\end{proof}
To finish this section, we include the polarisation 
\[
\Psi\colon \rmM_{2d,1}(\Z)\otimes\rmM_{2d,1}(\Z)\lto\Z(1), \quad
x\otimes y\mapsto 2\pi i\,x^t{\cdot}(\begin{smallmatrix} 0 & -1_d\\ 1_d &
  0\end{smallmatrix}){\cdot}y
\]
in the present discussion (up to here we have not used it, and the
results above are valid for $\tau$
in $\rmM_d(\C)$ whose imaginary part is invertible). Fixing the
second variable in $\Psi$ gives us the isomorphism
\[
\Psi_1\colon \rmM_{2d,1}(\Z) \lto \rmM_{2d,1}(\Z)^\vee(1),\quad
y\mapsto \left( x\mapsto \Psi(x\otimes y)\right)
\]
of $\Z$-Hodge structures (at~$\tau$ in~$\H_d$), and therefore an
isomorphism of complex tori
\begin{multline*}
 \lambda_\tau\colon 
 A_\tau = \rmM_{2d,1}(\C)/(M^{0,-1}_\tau+\rmM_{2d,1}(\Z)) \lto\\
 \lto
\rmM_{2d,1}(\C)^\vee/((M^\vee)^{0,-1}_\tau+\rmM_{2d,1}(\Z)^\vee(1))=B_\tau\,,
\end{multline*}
where the identification with $B_\tau$ is via universal
extensions as in the proof of Proposition~\ref{prop:MSV_and_PT}.
\begin{Proposition}\label{prop:pol_Z_and_C}
With the notation above, the $\C$-linear and $\Z$-linear maps
corresponding to~$\lambda_\tau$ are
\[
(\lambda_\tau)_\C\colon \rmM_{d,1}(\C)\to\rmM_{1,d}(\C)\,,\quad v\mapsto 2\pi i\, v^t
\]
and 
\[
\begin{aligned}
(\lambda_\tau)_\Z\colon \rmM_{2d,1}(\Z) & \to\rmM_{1,2d}(\Z)(1)\,,\\
y=(\substack{y_1\\ y_2}) & \mapsto 
2\pi i\, y^t{\cdot}(\begin{smallmatrix}0 & 1_d\\-1_d &
  0\end{smallmatrix}) = 2\pi i\, (-y_2^t\ \ y_1^t)\,.
\end{aligned}
\]
\end{Proposition}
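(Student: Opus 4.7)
The plan is to unwind $\lambda_\tau$ through the uniformisation of Section~\ref{sec4.7}: it is induced by the morphism of $\Z$-Hodge structures $\Psi_1\colon M\to M^\vee(1)$, with $M=\rmM_{2d,1}(\Z)$. So both formulas will follow by computing $\Psi_1$ first at the level of lattices, then at the level of the tangent-space quotient $M_\C/M^{0,-1}_\tau$, in each case in the coordinates used in Section~\ref{sec4.7}.

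For the $\Z$-linear part, I would identify $M^\vee=\rmM_{2d,1}(\Z)^\vee$ with $\rmM_{1,2d}(\Z)$ via the pairing ``row times column''. Then $\Psi_1(y)$ is the unique row $r$ such that $r\,x=2\pi i\,x^tJy$ for all columns~$x$, where $J=(\begin{smallmatrix}0 & -1_d\\ 1_d & 0\end{smallmatrix})$. Transposing (the scalar $rx=x^tr^t$) yields $r=2\pi i\,y^tJ^t=2\pi i\,y^t(\begin{smallmatrix}0 & 1_d\\ -1_d & 0\end{smallmatrix})$, which is exactly the asserted expression for $(\lambda_\tau)_\Z$.

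For the $\C$-linear part, the same calculation applied on $M_\C$ gives, for $y=(\substack{w\\ v})$ with $w,v\in\rmM_{d,1}(\C)$, the row $\Psi_{1,\C}(y)=2\pi i(-v^t,\,w^t)$. The source quotient $M_\C/M^{0,-1}_\tau$ is identified with $\rmM_{d,1}(\C)$ via $(\substack{w\\ v})\mapsto w-\tau v$, and the target quotient $M^\vee_\C/(M^{-1,0}_\tau)^\vee$ is the quotient of $M^\vee_\C$ by the annihilator of $M^{0,-1}_\tau=(\substack{\tau\\ 1_d})\C^d$. That annihilator consists of the rows $(r_1\ -r_1\tau)$, so the quotient is realised as $\rmM_{1,d}(\C)$ via $(r_1\ r_2)\mapsto r_2+r_1\tau$; this identification agrees with the description of $B_\tau$ recalled in Section~\ref{sec4.7}. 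Applying it, the image of $\Psi_{1,\C}(y)$ in $\rmM_{1,d}(\C)$ is $2\pi i(w^t-v^t\tau)$.

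The final step invokes the symmetry $\tau=\tau^t$, which holds because $\tau\in\H_d$ (equivalently because $M^{0,-1}_\tau$ is isotropic for $\Psi$). It gives $w^t-v^t\tau=w^t-v^t\tau^t=(w-\tau v)^t$, so the induced map sends the tangent vector $w-\tau v$ to $2\pi i(w-\tau v)^t$, i.e.\ $(\lambda_\tau)_\C(u)=2\pi i\,u^t$. The substantive ingredient is really this symmetry of $\tau$: without it, $v\mapsto 2\pi i\,v^t$ would not even be well-defined on the quotient (and $\Psi_1$ would not respect the Hodge filtrations). Everything else is bookkeeping with the matrix descriptions from the preceding subsection; the equivariance check $(y_1,y_2)^t\mapsto 2\pi i(-y_2^t,\,y_1^t)$ mapping to $2\pi i(y_1^t-y_2^t\tau)=2\pi i(y_1-\tau y_2)^t$ provides a consistency verification between the two claims.
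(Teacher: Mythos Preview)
Your proof is correct and follows essentially the same approach as the paper. The paper computes $(\lambda_\tau)_\Z$ by reference to the identifications in the proof of Proposition~\ref{prop:MSV_and_PT} and then obtains $(\lambda_\tau)_\C$ by checking the commutativity of the square relating the lattice and tangent-space maps (exactly your closing ``consistency verification''), invoking $\tau^t=\tau$ at the same point you do; your version is slightly more explicit in that you compute $(\lambda_\tau)_\C$ directly by pushing $\Psi_{1,\C}$ through the two quotients rather than guessing the answer and verifying.
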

\begin{proof}
For $(\lambda_\tau)_\Z$, this follows directly from the proof of
Proposition~\ref{prop:MSV_and_PT}. For $(\lambda_\tau)_\C$, it follows
from the commutativity of the diagram
\[
\begin{tikzcd}
\rmM_{2d,1}(\Z) \ar[r] \ar[d, "(1_d\ -\tau){\cdot}"'] & 
\rmM_{1,2d}(\Z)(1) \ar[d, "{\cdot}(\substack{\tau\\ 1_d})"] &
(\substack{y_1\\ y_2}) \ar[r, mapsto]& 2\pi i\, (-y_2^t \ \ y_1^t)\\
\rmM_{d,1}(\C) \ar[r] & \rmM_{1,d}(\C) & v\ar[r, mapsto] & 2\pi i\, v^t\,.
\end{tikzcd}
\]
Here one uses that $\tau^t=\tau$.
\end{proof}
It is reassuring to see, using Proposition~\ref{prop:symm_on_Z}, that , as
$(\lambda_\tau)_\Z=(\begin{smallmatrix}0 & 1_d\\-1_d &
  0\end{smallmatrix})$ is antisymmetric
$\lambda_\tau^\vee=\lambda_\tau$.

\section{Ribet varieties are special subvarieties}\label{sec5}
We recall that in Section~\ref{sec3} we had an abelian scheme $A\to S$
and a morphism $f\colon A^\vee\to A$, and $\alpha:=f-f^\vee\colon
A^\vee\to A$, hence $\alpha^\vee=-\alpha$, and a section $r_f$ of the
Poincar\'e torsor over the graph of~$\alpha$. Now we describe this in
the present context, over~$\C$, in the principally polarised case.

Let $M:=\Z(1)\oplus\Z^{2d}\oplus\Z$, $W$, $D$, and $P$ be as in
Section~\ref{sec:uptams}, and recall the notation $B_\tau$ from the
beginning of Section~\ref{sec4.7}. 
Let $\tau_0$ be in~$\H_d$, $f\colon B_{\tau_0}\to
A_{\tau_0}$ a morphism, and $\alpha:=f-f^\vee\colon B_{\tau_0}\to
A_{\tau_0}$. Then $\alpha$ gives (and is given by) the $\Z$-linear map
\begin{equation}\label{eq:alphaZ}
\begin{aligned}
\Gr^W_{-1}(M)^\vee(1) = \rmM_{1,2d}(\Z(1)) & \lto \rmM_{2d,1}(\Z) 
= \Gr^W_{-1} (M)\,,\\
2\pi i (\begin{matrix} n_1 & n_2\end{matrix}) & \mapsto 
\alpha_\Z{\cdot}\begin{pmatrix} n_1^t\\ n_2^t\end{pmatrix},
\end{aligned}
\end{equation}
with $\alpha_\Z \in\rmM_{2d}(\Z)$. 
By Proposition~\ref{prop:symm_on_Z},
\[
\alpha_\Z = f_\Z -(f^\vee)_\Z = f_\Z +(f_\Z)^t\,.
\]
Hence $\alpha_\Z$ is symmetric and the quadratic form
\[
\rmM_{1,2d}(\Z) \lto \Z,\quad 
x\mapsto \frac{1}{2}\,x{\cdot}\alpha_\Z{\cdot}x^t = x{\cdot}f_\Z{\cdot}x^t
\]
is $\Z$-valued. Just for completeness, we include that the
endomorphism $\beta:=\alpha\circ \lambda_{\tau_0}$ of $A_{\tau_0}$ is
anti-symmetric for the Rosati involution:
\[
\lambda_{\tau_0}^{-1}\circ \beta^\vee \circ \lambda_{\tau_0} = 
\lambda_{\tau_0}^{-1}\circ (\alpha\circ \lambda_{\tau_0})^\vee \circ
\lambda_{\tau_0} = 
\lambda_{\tau_0}^{-1}\circ \lambda_{\tau_0}^\vee\circ\alpha^\vee \circ
\lambda_{\tau_0} = -\alpha \circ \lambda_{\tau_0} =-\beta\,.
\]
Now, everything is in place to introduce the connected mixed Shimura
subvariety of the universal Poincar\'e-torsor $P^u(\Z)\backslash D$
over~$\H_d$ (quotiented by a suitable congruence subgroup
of~$\GSp(\Psi)(\Z)$) that is dictated by the map in~(\ref{eq:alphaZ})
being a morphism of Hodge structures. Concretely, we let $P_\alpha$
and $G_\alpha$ be the connected components of identity of the
stabilisers of~(\ref{eq:alphaZ}) in~$P$ and in~$\GSp(\Psi)$. As the
action of $P$ on $\Gr^W_{-1}(M)$ factors through $\GSp(\Psi)$,
$P_\alpha$ is the inverse image in~$P$ of~$G_\alpha$, and the
unipotent radical $P_\alpha^u$ of $P_\alpha$ is equal to~$P^u$, hence
contains~$U$.  In $D$ and $\H_d$, we consider the orbits
\begin{equation}\label{eq:Dalpha}
D_\alpha:=P_\alpha(\R)^+U(\C){\cdot}\widetilde{\tau_0} \subset D
\quad\text{and}\quad
\H_{d,\alpha}:=G_\alpha(\R)^+{\cdot}\tau_0 \subset \H_d \,,
\end{equation}
where $\widetilde{\tau_0}$ is the element of $D$ that corresponds to
$(\tau_0,0,0,0)$ under the bijection of
Proposition~\ref{prop:paramD}. More intrinsically:
$\widetilde{\tau_0}$ is the mixed Hodge structure on $M$ in which the
weight filtration is split over $\Z$ by the given $\Z$-basis, and
which induces that given by $\tau_0$ on~$\Gr^W_{-1}M$. Here, it does
not matter which lift of $\tau_0$ we take, but it will matter further
on when we describe the Ribet section in~$D_\alpha$.

Deligne's group theoretical description of Shimura varieties shows
that $\H_{d,\alpha}$ is the connected component containing $\tau_0$ of
the set of $\tau\in\H_d$ where~(\ref{eq:alphaZ}) is a morphism of
Hodge structures (equivalently: where it induces a morphism
$\alpha\colon B_\tau\to A_\tau$). Let us explain in a few lines how
this works; for details, see~\cite[Section~2.4]{Moonen-L1}
and~\cite[Section~1.1.12]{Deligne-VS}. Pure Hodge structures on an
$\R$-vector space correspond to $\R$-algebraic actions
of~$\C^\times$. For $G$ a connected linear algebraic group over~$\R$,
the set of $\R$-morphisms $\Hom(\C^\times,G(\R))$ is the set of
$\R$-points of a smooth $\R$-scheme, which is the disjoint union of
$G$-orbits (for $G$ acting by composition with inner automorphisms).
The $G(\R)^+$-orbits in $\Hom(\C^\times,G(\R))$ are the connected
components for the Archimedean topology. References in~\cite{SGA3.II}
(SGA~3): Exp.~IX, Cor.~3.3, and Exp.~XI, Cor.~4.2.

The pairs $(P_\Q,D)$, $(G_{\alpha,\Q},\H_{d,\alpha})$ and
$(P_{\alpha,\Q},D_\alpha)$ are connected mixed Shimura data as
in~\cite[Def.~2.1]{Pink1}, and we have the diagram of morphisms of
connected mixed Shimura data 
\begin{equation}
\begin{tikzcd}
(P_{\alpha,\Q},D_\alpha) \ar[r, hook] \ar[d, two heads]& (P_\Q,D) \ar[d, two heads]\\
(G_{\alpha,\Q},\H_{d,\alpha}) \ar[r,hook] & (\GSp(\Psi)_\Q,\H_d)\,.
\end{tikzcd}  
\end{equation}
The careful reader will have noticed that we must show that $D$ is a
$P(\R)^+U(\C)$-orbit in $\Hom(\C^\times\times\C^\times,P(\C))$ and
$D_\alpha$ is a $P_\alpha(\R)^+U(\C)$-orbit in
$\Hom(\C^\times\times\C^\times,P_\alpha(\C))$.  For the fact that the
natural maps from these orbits to $D$ and $D_\alpha$ are isomorphisms
we refer to Propositions~1.18 and~1.16(c) in~\cite{Pink_thesis} (the
surjectivity is clear because source and target are orbits for the
same group, for the injectivity one has to show that the stabilisers
are the same).

\begin{Proposition}\label{prop:Poinc_torsor_over_H_alpha}
The quotient $P_\alpha^u(\Z)\backslash D_\alpha$ is the universal
Poincar\'e torsor over~$\H_{d,\alpha}$. The quotient of $D_\alpha$ by
$P_\alpha^u(\Z)U(\C)$ is the universal family of $A_\tau\times B_\tau$'s
over~$\H_{d,\alpha}$. The quotient of $D_\alpha$ by
$P_\alpha^u(\Z)\rmM_{1,2d}(\R)U(\C)$
is the universal family of
$A_\tau$'s over~$\H_{d,\alpha}$, and the quotient of $D_\alpha$ by
$P_\alpha^u(\Z)\rmM_{2d,1}(\R)U(\C)$ is the universal family of
$B_\tau$'s over~$\H_{d,\alpha}$.
\end{Proposition}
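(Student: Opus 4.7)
The plan is to deduce all four claims from Proposition~\ref{prop:MSV_and_PT} by restricting to the sub-datum $(P_{\alpha,\Q},D_\alpha)$ and then reading off the action of successive subgroups of~$P$ on the coordinates $(\tau,u,v,w)$ supplied by Proposition~\ref{prop:paramD}.

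First I would identify $D_\alpha$ as the preimage of $\H_{d,\alpha}$ in~$D$ under the projection $D\to\H_d$. Since $P_\alpha^u=P^u$ (noted just before~(\ref{eq:Dalpha})), and equation~(\ref{eq:P-action}) shows that $P^u(\R)U(\C)$ acts transitively on each fibre of $D\to\H_d$, the orbit $P_\alpha(\R)^+U(\C)\cdot\widetilde{\tau_0}$ sweeps out the whole fibre of $D\to\H_d$ over every $\tau\in\H_{d,\alpha}$. Hence $D_\alpha = D\times_{\H_d}\H_{d,\alpha}$, and the first assertion follows from Proposition~\ref{prop:MSV_and_PT} by pulling back along $\H_{d,\alpha}\hookrightarrow\H_d$.

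The remaining three statements I would handle by inspecting~(\ref{eq:P-action}) coordinate by coordinate. The subgroup $U(\C)$ acts only on~$w$, by $w\mapsto w+2\pi iz$ for $z\in\C$, so a further quotient by $U(\C)$ collapses the $\Gm$-fibre and leaves the universal $A_\tau\times B_\tau$ over~$\H_{d,\alpha}$. The subgroup $\rmM_{1,2d}(\R)\subset P_\alpha^u(\R)$ acts on~$u$ via $(x_1,x_2)\mapsto 2\pi i(x_1\tau+x_2)$, which is surjective onto $\rmM_{1,d}(\C)$ since $\Im\tau$ is invertible; combined with the $U(\C)$-quotient this collapses $u$, leaving the $v$-coordinate modulo $(1_d\ -\tau){\cdot}\rmM_{2d,1}(\Z)$, which is $A_\tau$ as computed in Section~\ref{sec4.7}. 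Symmetrically, $\rmM_{2d,1}(\R)$ acts on~$v$ by $(y_1,y_2)\mapsto y_1-\tau y_2$, whose image fills $\rmM_{d,1}(\C)$, and the remaining $u$-coordinate modulo $\rmM_{1,2d}(\Z(1)){\cdot}(\substack{\tau\\ 1_d})$ is~$B_\tau$.

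The only real subtlety is the transitivity step: that $D_\alpha$ covers every fibre of $D\to\H_d$ over~$\H_{d,\alpha}$, and not merely the one through $\widetilde{\tau_0}$. This is already visible in~(\ref{eq:P-action}), where the parameters $(x_1,x_2,y_1,y_2,z)$ can be chosen to move $(0,0,0)$ to any prescribed $(u,v,w)$, and no compatibility between these parameters and the base-point $\tau_0$ is imposed. Once this is settled, the three further quotients are just coordinate-wise translations on the three factors of $\rmM_{1,d}(\C)\times\rmM_{d,1}(\C)\times\C$, and matching them against the formulas of Section~\ref{sec4.7} is routine.
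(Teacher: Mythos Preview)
Your proposal is correct and follows exactly the route the paper indicates: the paper's own proof is the single sentence ``One easily deduces this from Proposition~\ref{prop:MSV_and_PT} and parts of its proof,'' and you have simply written out what that deduction amounts to. Your identification of $D_\alpha$ with $D\times_{\H_d}\H_{d,\alpha}$ via $P_\alpha^u=P^u$ and the fibrewise transitivity visible in~(\ref{eq:P-action}), followed by reading off the successive coordinate actions from~(\ref{eq:P-action}) and matching them with the lattice descriptions of $A_\tau$ and $B_\tau$ in Section~\ref{sec4.7}, is precisely the intended argument.
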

\begin{proof}
One easily deduces this from Proposition~\ref{prop:MSV_and_PT} and
parts of its proof. 
\end{proof}
Now we proceed directly to the Ribet section, by revealing the tensor
that defines it, namely, the map (encoded by a
matrix~$\widetilde{\alpha_\Z}$) 
\begin{equation}\label{eq:map_tilde_alphaZ}
\begin{tikzcd}
M^\vee(1) = \Z\oplus\rmM_{1,2d}(\Z(1))\oplus\Z(1) \ar[r] &
\Z(1)\oplus\rmM_{2d,1}(\Z)\oplus\Z = M \\
x =  (\begin{matrix} k_1 & 2\pi i n & 2\pi i k_2\end{matrix}) 
\ar[r,mapsto] & 
\begin{pmatrix} -2\pi i k_2\\ \alpha_\Z{\cdot}n^t \\
 -k_1\end{pmatrix} = \widetilde{\alpha_\Z} {\cdot} x^t \,, 
\end{tikzcd}
\end{equation}
where
\begin{equation}\label{eq:tilde_alphaZ}
\widetilde{\alpha_\Z} = 
\begin{pmatrix}
0 & 0 & -1 \\
0 & \alpha_\Z & 0 \\
-1 & 0 & 0
\end{pmatrix}\quad \text{in $\rmM_{2d+2}(\Z)$}\,.
\end{equation}
This tensor was already described in~\cite{Ribet}, see
also~\cite[Lemme~6]{Bertrand13a}.
We let $P_{\tilde{\alpha}}$ be the stabiliser in $P$ of this
map~(\ref{eq:map_tilde_alphaZ}), as a group scheme over~$\Z$. Then,
for any $\Z$-algebra $R$ and for any $p$ in $P(R)$ we have $p\in
P_{\tilde{\alpha}}(R)$ if and only if $p{\cdot}\widetilde{\alpha_\Z} =
\mu(p)\widetilde{\alpha_\Z}{\cdot}p^{-1,t}$ in $\rmM_{2d+2}(R)$, which
is equivalent to $p{\cdot}\widetilde{\alpha_\Z}{\cdot}p^t
=\mu(p)\widetilde{\alpha_\Z}$. A direct computation then shows, for
any $\Z$-algebra $R$ in which multiplication by $2$ is injective:
\begin{equation}
\label{eq:Ptildealpha}
P_{\tilde{\alpha}}(R) = 
\left\{
\begin{pmatrix}
\mu(g) & x & \mu(g)^{-1}x f_\Z x^t \\
0 & g & \mu(g)^{-1}g\alpha_\Z x^t \\
0 & 0 & 1    
\end{pmatrix} :
\text{$(g,\mu(g))\in G_\alpha(R)$, $x\in\rmM_{1,2d}(R)$}
\right\},
\end{equation}
where the matrices are with respect to the $\Z$-basis $2\pi ie_0,
e_1,\ldots,e_{2d+1}$ of~$M$. We note that for $R$ on which
multiplication by $2$ is injective, $P_{\tilde{\alpha}}(R)$ is the
semi-direct product
\begin{equation}\label{eq:P_alpha_semidir_prod}
P_{\tilde{\alpha}}(R) = \rmM_{1,2d}(R) \rtimes G_\alpha(R) = 
\left\{
\begin{pmatrix}
1 & x & x f_\Z x^t \\
0 & 1_{2d} & \alpha_\Z x^t \\
0 & 0 & 1    
\end{pmatrix}
\right\}\cdot
\left\{
\begin{pmatrix}
\mu(g) & 0 & 0 \\
0 & g & 0 \\
0 & 0 & 1    
\end{pmatrix}
\right\} \,.
\end{equation}
where $x$ ranges over $\rmM_{1,2d}(R)$ and $g$ over~$G_\alpha(R)$. In
particular, the unipotent radical (over $\Z[1/2]$) of
$P_{\tilde{\alpha}}$ is a vector group scheme, and the weight $-2$
part of its Lie algebra is zero. We define
\begin{equation}
D_{\tilde{\alpha}} :=
P_{\tilde{\alpha}}(\R)^+{\cdot}\widetilde{\tau_0} \subset
D_\alpha\subset D \,.  
\end{equation}
Then we have the following diagram of connected mixed Shimura data
\begin{equation}
\begin{tikzcd}
(P_{\widetilde{\alpha},\Q},D_{\widetilde{\alpha}}) \ar[r, hook] \ar[rd,
two heads] & 
(P_{\alpha,\Q},D_\alpha) \ar[d, two heads]\\
& (G_{\alpha,\Q},\H_{d,\alpha}) \,.
\end{tikzcd}
\end{equation}

\begin{Theorem}\label{thm:RibetsectionMSV}
  The quotient $P_{\tilde{\alpha}}^u(\Z)\backslash D_{\tilde{\alpha}}$
  is the image of a section $r^\Sh_f$ in $P_\alpha^u(\Z)\backslash
  D_\alpha$ (the universal Poincar\'e torsor over~$\H_{d,\alpha}$, see
  Proposition~\ref{prop:Poinc_torsor_over_H_alpha}) over the family of
  $B_\tau$ with $\tau$ ranging over~$\H_{d,\alpha}$. In particular,
  the image of $r^\Sh_f$ is a special subvariety. This section $r^\Sh_f$
  is equal, in this setting, to the section $r_f$ of
  Proposition~\ref{prop:ribsec}. 
\end{Theorem}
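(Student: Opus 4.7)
The plan is to uniformise $D_{\tilde\alpha}$ explicitly by means of Proposition~\ref{prop:paramD} and the action formula~\eqref{eq:P-action}, read off directly that the resulting quotient defines a section of the Poincar\'e torsor over $B_\tau$ whose image lies in the graph of $\alpha$ and which takes the value~$1$ at the origin, and then apply the uniqueness part of Proposition~\ref{prop:ribsec} to conclude that this section coincides with~$r_f$.

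First I fix $\tau_0\in\H_{d,\alpha}$ and describe the fibre of $D_{\tilde\alpha}$ over $\tau_0$. By~\eqref{eq:P_alpha_semidir_prod}, a general element of $P_{\tilde\alpha}^u(\R)$ is parametrised by $x=(x_1\ x_2)\in\rmM_{1,2d}(\R)$, with forced values $y=\alpha_\Z x^t$ and $z=xf_\Z x^t$. Applying such an element to $\widetilde{\tau_0}$, which in the coordinates of Proposition~\ref{prop:paramD} is $(\tau_0,0,0,0)$, and reading off the last matrix of~\eqref{eq:P-action}, I obtain
\begin{equation*}
(\tau,u,v,w)=\bigl(\tau_0,\ 2\pi i(x_1\tau_0+x_2),\ y_1-\tau_0 y_2,\ 2\pi i z-2\pi i(x_1\tau_0+x_2)y_2\bigr),
\end{equation*}
where $y=(\substack{y_1\\ y_2})$. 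The map $(x_1,x_2)\mapsto 2\pi i(x_1\tau_0+x_2)$ is a real-linear bijection from $\rmM_{1,2d}(\R)$ onto $\rmM_{1,d}(\C)$ (since $\Im(\tau_0)$ is invertible) that identifies $\rmM_{1,2d}(\Z)$ with the period lattice of $B_{\tau_0}$ recalled at the start of Section~\ref{sec4.7}. Hence, after quotienting by $P_{\tilde\alpha}^u(\Z)$, the coordinate $u$ parametrises $B_{\tau_0}$ bijectively, so the induced map to the Poincar\'e torsor is the image of a section over~$B_{\tau_0}$.

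Next I verify that this section lies over the graph of $\alpha$. Writing $\alpha_\Z=\left(\begin{smallmatrix}a & b\\ c & d\end{smallmatrix}\right)$, I have $y_1=ax_1^t+bx_2^t$ and $y_2=cx_1^t+dx_2^t$, so $v=(a-\tau_0 c)x_1^t+(b-\tau_0 d)x_2^t$. Using the analogue of~\eqref{eq:fC_andf_Z} for $\alpha$, namely $2\pi i\alpha_\C\tau_0^t=a-\tau_0 c$ and $2\pi i\alpha_\C=b-\tau_0 d$, together with $\tau_0^t=\tau_0$, this simplifies to $v=\alpha_\C u^t=\alpha(u)$, as required. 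At $x=0$ we have $w=0$, which under the exponential identification of the $\C^\times$-fibre is the unit element, so the section takes the value $1$ at the origin. Letting $\tau_0$ vary over $\H_{d,\alpha}$ and assembling fibrewise gives the global section $r^\Sh_f$. Since $(P_{\tilde\alpha,\Q},D_{\tilde\alpha})$ is by construction a connected mixed Shimura subdatum of $(P_{\alpha,\Q},D_\alpha)$, the image $P_{\tilde\alpha}^u(\Z)\backslash D_{\tilde\alpha}$ is special by definition, and the uniqueness clause of Proposition~\ref{prop:ribsec} forces $r^\Sh_f=r_f$.

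The only real obstacle is bookkeeping: interpreting the matrix action~\eqref{eq:P-action} in the $(\tau,u,v,w)$-coordinates, tracking the factors of $2\pi i$ between the $\Z$- and $\C$-bases of~$M$, and verifying that the integral action of $P_{\tilde\alpha}^u(\Z)$ matches the period lattices of $B_{\tau_0}$ and~$A_{\tau_0}$. Once this is in place, the entire Hodge-theoretic content of the theorem collapses to the single identity $v=\alpha(u)$, and the torsion property of $r^\Sh_f$ announced in Theorem~\ref{thm:ttorsionproperty} will then follow from its Shimura-variety incarnation as a special subvariety.
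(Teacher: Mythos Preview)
Your argument follows the same route as the paper's own proof: parametrise the fibre of $D_{\tilde\alpha}$ by $x\in\rmM_{1,2d}(\R)$ via the unipotent action~\eqref{eq:P-action}, check that the $u$-coordinate sweeps out $B_\tau$ while $(v,w)$ records a section over the graph of~$\alpha$ with value~$1$ at the origin, and conclude by the uniqueness clause of Proposition~\ref{prop:ribsec}. Your explicit verification that $v=\alpha_\C u^t$ via~\eqref{eq:fC_andf_Z} is in fact more detailed than the paper, which simply appeals to Proposition~\ref{prop:Poinc_torsor_over_H_alpha} together with the shape~\eqref{eq:P_alpha_semidir_prod} of~$P^u_{\tilde\alpha}$.

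There is, however, one genuine slip. You write ``fix $\tau_0\in\H_{d,\alpha}$'' and later ``letting $\tau_0$ vary over $\H_{d,\alpha}$'', but $\tau_0$ is the \emph{fixed} base point in the definition $D_{\tilde\alpha}:=P_{\tilde\alpha}(\R)^+{\cdot}\widetilde{\tau_0}$; it does not vary. Your computation therefore only describes the fibre of $D_{\tilde\alpha}$ over the single point~$\tau_0$. To treat an arbitrary $\tau\in\H_{d,\alpha}$ you must show that $D_{\tilde\alpha,\tau}=P^u_{\tilde\alpha}(\R){\cdot}\tilde\tau$, where $\tilde\tau$ corresponds to $(\tau,0,0,0)$. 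The paper handles this by choosing $g\in G_\alpha(\R)^+$ with $g\tau_0=\tau$, observing that $g\in P_{\tilde\alpha}(\R)^+$ via~\eqref{eq:P_alpha_semidir_prod}, and checking from~\eqref{eq:GSp-action} that $g{\cdot}\widetilde{\tau_0}=\tilde\tau$; then $D_{\tilde\alpha,\tau}=P^u_{\tilde\alpha}(\R)g{\cdot}\widetilde{\tau_0}=P^u_{\tilde\alpha}(\R){\cdot}\tilde\tau$. Once you insert this transitivity step and run your explicit computation at~$\tau$ rather than at~$\tau_0$, the proof is complete.
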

\begin{proof}
  It is sufficient to verify the claim at each
  $\tau\in\H_{d,\alpha}$. So let $\tau$ be such. The description
  in~(\ref{eq:P-action}) of the action of
  $P^u_\alpha(\R)U(\C)=P^u(\R)U(\C)$ on $D$ shows that it is free and
  transitive on the fibre $D_{\alpha,\tau}$ of
  $D_\alpha\to\H_{d,\alpha}$ at~$\tau$. This gives us the bijection
\begin{equation}\label{eq:D_alpha_tau}
\begin{tikzcd}
P^u_\alpha(\R)U(\C) \ar[r,"\simeq"] & D_{\alpha,\tau}
\end{tikzcd}\,\quad
p\mapsto p{\cdot}\widetilde{\tau}\,,  
\end{equation}
where $\widetilde{\tau}$ is the element of $D$ that corresponds to
$(\tau,0,0,0)$ under the bijection of Proposition~\ref{prop:paramD}.
For $g$ in $G_\alpha(\R)^+$ with $g{\cdot}\tau_0=\tau$, we have $g\in
P_{\widetilde{\alpha}}(\R)^+$ via~(\ref{eq:P_alpha_semidir_prod}), and
$\widetilde{\tau}=g{\cdot}\widetilde{\tau_0}$ (use
(\ref{eq:GSp-action})), hence
$D_{\widetilde{\alpha},\tau}=P^u_{\widetilde{\alpha}}(\R){\cdot}\widetilde{\tau}$.
Via the bijection~(\ref{eq:D_alpha_tau}), the inclusion
$D_{\widetilde{\alpha},\tau}\subset D_{\alpha,\tau}$ corresponds to
the inclusion $P^u_{\widetilde{\alpha}}(\R)\subset
P^u_\alpha(\R)U(\C)$, and the $P^u_{\widetilde{\alpha}}(\Z)$-action on
$D_{\tilde{\alpha}}$ corresponds to the action by left-multiplication
on $P^u_{\widetilde{\alpha}}(\R)$. By~(\ref{eq:P_alpha_semidir_prod}),
$P^u_{\widetilde{\alpha}}(\R)=\rmM_{1,2d}(\R)$, and
(\ref{eq:D_alpha_tau}) identifies this with $\rmM_{1,d}(\C)$, sending
$(x_1,x_2)$ to $2\pi i(x_1\tau + x_2)$ by~(\ref{eq:P-action}). Hence
$P_{\widetilde{\alpha}}^u(\Z)\backslash
D_{\widetilde{\alpha},\tau}=B_\tau$.
Proposition~\ref{prop:Poinc_torsor_over_H_alpha} together with the
description~(\ref{eq:P_alpha_semidir_prod}) of
$P_{\widetilde{\alpha}}^u$ show that
$P_{\tilde{\alpha}}^u(\Z)\backslash D_{\tilde{\alpha}}$ is the image
of a section $r^\Sh_f$ of the Poincar\'e torsor over the graph of
$\alpha\colon B_\tau\to A_\tau$ (equivalently, over~$B_\tau$). This
section differs from $r_f$ by multiplication by a global regular
function on~$B_\tau$, hence by a constant factor in~$\C^\times$. As
both sections have value~$1$ at $0\in B_\tau$, they are equal.
\end{proof}

\begin{Proposition}\label{prop_tors_prop_r_f_msv}
  Let $\tau$ be an element of~$\H_{d,\alpha}$. The subset
  $\rmM_{1,2d}(\Z)\backslash\rmM_{1,2d}(\R){\cdot}1$ of
  $P_\alpha^u(\Z)\backslash P_\alpha^u(\R)U(\C)$ corresponds, under
  the bijection in~(\ref{eq:D_alpha_tau}), to the unit section over
  $B_\tau$ of the Poincar\'e torsor $P_\alpha^u(\Z)\backslash
  D_{\alpha,\tau}$ on $A_\tau\times B_\tau$ (see
  Proposition~\ref{prop:Poinc_torsor_over_H_alpha}). 
  For $x$ in $\rmM_{1,2d}(\R)$ and
  $\ol{x}$ its image in~$B_\tau$, the extension $E_{\tau,\ol{x}}$ of
  $A_\tau$ by $\C^\times$ corresponding to~$\ol{x}$ is,
  as real Lie group, $(\C/2\pi i\Z)\times(\rmM_{2d,1}(\R)/\rmM_{2d,1}(\Z))$,
  and $r_f(\ol{x})$ is given by $(2\pi ixf_\Z x^t, \alpha_\Z x^t)$. If $\ol{x}$ is of
  order $n$ in~$B_\tau$, then $r_f(\ol{x})$ in $E_{\tau,\ol{x}}$ is killed by~$n^2$.
\end{Proposition}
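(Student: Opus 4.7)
The plan is to exploit Theorem~\ref{thm:RibetsectionMSV}, which identifies the image of $r_f$ at $\tau$ with $P_{\widetilde{\alpha}}^u(\Z)\backslash D_{\widetilde{\alpha},\tau}$, together with the explicit semi-direct product description~\eqref{eq:P_alpha_semidir_prod} of $P_{\widetilde{\alpha}}^u$ and the action formula~\eqref{eq:P-action}. First I would plug the matrix from~\eqref{eq:P_alpha_semidir_prod} into~\eqref{eq:P-action} applied to $\widetilde{\tau}=(\tau,0,0,0)$, writing $x=(x_1\ x_2)$, $\alpha_\Z x^t=(\substack{y_1\\ y_2})$ and $z=xf_\Z x^t$. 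A direct substitution yields the triple
\[
u = 2\pi i(x_1\tau+x_2),\quad v = (1_d\ -\tau)\,\alpha_\Z x^t,\quad
w = 2\pi i\, x f_\Z x^t - 2\pi i(x_1\tau+x_2)y_2.
\]
Performing the same calculation with $y=0$ and $z=0$, i.e.\ for an element of the form $\rmM_{1,2d}(\R)\cdot 1 \subset P_\alpha^u(\R)U(\C)$, gives $(u,v,w)=(2\pi i(x_1\tau+x_2),0,0)$. Since the central $U(\C)$ factor is, via~\eqref{eq:P-action}, identified with the canonical trivialisation of $\calP$ along $\{0\}\times B_\tau$, this triple represents the unit $1_{(0,\bar x)}$ of the fibre, which as $\bar x$ varies is the unit section over $B_\tau$; this settles the first claim.

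Next I would realise $E_{\tau,\bar x}$ as a real Lie group. By the proof of Proposition~\ref{prop:MSV_and_PT}, once the lift $u=2\pi i(x_1\tau+x_2)$ of $\bar x$ is fixed, the fibre of the Poincar\'e torsor over $A_\tau\times\{\bar x\}$ is the quotient of $\rmM_{d,1}(\C)\times\C$ by $\Z(1)$ acting as $(v,w)\mapsto(v,w+2\pi in)$ and by $\rmM_{2d,1}(\Z)$ acting as in~\eqref{eq:action_m}. The key step is to introduce the real-analytic map
\[
\phi\colon E_{\tau,\bar x}\lto \frac{\C}{2\pi i\Z}\times\frac{\rmM_{2d,1}(\R)}{\rmM_{2d,1}(\Z)},\qquad
(v,w)\longmapsto \bigl(w + u\,y_2,\; y\bigr),
\]
where $y=(\substack{y_1\\ y_2})\in\rmM_{2d,1}(\R)$ is uniquely determined by $v=(1_d\ -\tau)y$ (using that $\Im(\tau)$ is invertible). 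A direct check shows that under $(\substack{m_1\\ m_2})\in\rmM_{2d,1}(\Z)$ the shifts of $v$ and $w$ conspire to cancel, so $\phi$ is well-defined, and it is visibly a homomorphism of commutative real Lie groups of the common real dimension $2d+2$, hence an isomorphism. Applying $\phi$ to the $(v,w)$ found above gives $\phi(r_f(\bar x))=(2\pi i\, x f_\Z x^t,\ \alpha_\Z x^t)$, which is the asserted formula.

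For the torsion statement, suppose $\bar x$ has order $n$ in $B_\tau$, so that $m:=nx\in\rmM_{1,2d}(\Z)$. Since $\phi$ is a group homomorphism,
\[
n^2\, r_f(\bar x)\;=\;\phi^{-1}\bigl(2\pi i\,n^2 x f_\Z x^t,\ n^2\alpha_\Z x^t\bigr)
=\phi^{-1}\bigl(2\pi i\, m f_\Z m^t,\ n\alpha_\Z m^t\bigr).
\]
The first component lies in $2\pi i\Z$ because $f_\Z\in\rmM_{2d}(\Z)$ and $m$ has integral entries, while the second lies in $\rmM_{2d,1}(\Z)$ because $\alpha_\Z\in\rmM_{2d}(\Z)$. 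Both coordinates therefore vanish modulo the respective lattices, so $r_f(\bar x)$ is killed by $n^2$ in $E_{\tau,\bar x}$.

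The step I expect to be the main obstacle is guessing the map $\phi$: the shift $w+uy_2$ has to be chosen exactly so as to trivialise the $\rmM_{2d,1}(\Z)$-cocycle in the $w$-coordinate, and one must keep track of the transition between the $\Z$-basis and the $\C$-basis of $M$, which produces the stray factors of $2\pi i$ visible in~\eqref{eq:P_element} and~\eqref{eq:P-action}. Once $\phi$ is correctly written down, each subsequent verification is pure substitution.
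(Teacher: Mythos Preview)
Your proof is correct and follows essentially the same route as the paper. Both arguments identify the unit section by plugging $y=0$, $z=0$ into~\eqref{eq:P-action} at~$\widetilde{\tau}$, and both obtain the real Lie group description of $E_{\tau,\ol{x}}$ and the formula for $r_f(\ol{x})$ from the explicit action~\eqref{eq:P-action} combined with the description~\eqref{eq:P_alpha_semidir_prod} of~$P^u_{\widetilde{\alpha}}$. The only cosmetic difference is that the paper computes the forward map $(z,y)\mapsto (v,w)$ given by $p_{z,y}{\cdot}p_x{\cdot}\widetilde{\tau}$ and simply observes that it is $\R$-linear (hence a group isomorphism after passing to the lattice quotients), whereas you write down the inverse map $\phi\colon(v,w)\mapsto(w+uy_2,y)$ explicitly; your $\phi$ is precisely the inverse of the paper's map, up to the $2\pi i$ in the first coordinate. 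The torsion argument is identical.
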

\begin{proof}
Consider~(\ref{eq:D_alpha_tau}) and~(\ref{eq:P-action}). 
Let $x=(x_1,x_2)\in\rmM_{1,2d}(\R)$. This gives the elements
\[
p_x := 
\begin{pmatrix}
1 & 2\pi ix_1 & 2\pi ix_2 & 0\\
0 & 1_d & 0 & 0\\
0 & 0 & 1_d & 0\\
0 & 0 & 0 & 1    
\end{pmatrix}
\in P^u_\alpha(\R)\,,
\]
and
\[
p_x{\cdot}\tilde{\tau} = 
\begin{pmatrix}
2\pi i(x_1\tau + x_2)& 0\\ 
\tau & 0\\ 1_d & 0\\ 0 & 1
\end{pmatrix}
\C^{d+1}\in D_{\alpha,\tau}\,.
\]
This proves the first claim of the proposition. To describe
$E_{\tau,\ol{x}}$, let, for $z$ in~$\C$ and
$(\begin{smallmatrix}y_1\\y_2\end{smallmatrix})$ in~$\rmM_{2d,1}(\R)$,
\[
p_{z,y} :=
\begin{pmatrix}
1 & 0 & 0 & 2\pi i z\\
0 & 1_d & 0 & y_1\\
0 & 0 & 1_d & y_2\\
0 & 0 & 0 & 1    
\end{pmatrix}\,,
\]
and then
\[
p_{z,y}{\cdot}p_x{\cdot}\tilde{\tau} = 
\begin{pmatrix}
2\pi i(x_1\tau + x_2)& 2\pi i(z-(x_1\tau+x_2)y_2)\\ 
\tau & y_1-\tau y_2\\ 1_d & 0\\ 0 & 1
\end{pmatrix}
\C^{d+1}\,.
\]
Now observe that $2\pi i(z-(x_1\tau+x_2)y_2)$ and $y_1-\tau y_2$ are
$\R$-linear in~$z$, $y_1$ and~$y_2$, and that $2\pi i(x_1\tau + x_2)$
does not depend on~$z$, $y_1$ and~$y_2$. Hence the $\R$-vector space
structure on $\{2\pi i(x_1\tau + x_2)\}\times\rmM_{d,1}(\C)\times \C$
in~$D_{\alpha,\tau}$ corresponds to the $\R$-vector space structure on
$\rmM_{2d,1}(\R)\times\C$ on the left, and therefore the same holds
for the group structures. The left-action by the $p_{z,y}$ with
$z\in\Z$ and $y\in\rmM_{2d,1}(\Z)$ on these 2 real vector spaces then
gives the description of~$E_{\tau,\ol{x}}$. The description
of~$P^u_{\tilde{\alpha}}$ in~(\ref{eq:P_alpha_semidir_prod}) proves
the last two claims in the proposition.
\end{proof}

\begin{Remark}\label{rem_r_f_msv}
Assume that $\alpha$ is an isogeny. 
\begin{enumerate}
\item The tensor $\widetilde{\alpha}$ in~(\ref{eq:map_tilde_alphaZ})
  that defines the Ribet variety as an irreducible component of its
  Hodge locus is a selfduality of mixed $\Q$-Hodge structures. It is
  interesting to see that on the underlying $\Z$-module $M$ it is a
  symmetric $\Z(1)$-valued pairing. Algebraically this can be
  described as a self-duality of $1$-motives with $\Q$-coefficients,
  see~\cite{Ribet} and also~\cite{Bertrand13a}.
\item\label{rem_r_f_msv_item_2} Let $\Gamma_\alpha(3)$ be the kernel
  of $G_\alpha(\Z)\to G_\alpha(\F_3)$. Then $\Gamma_\alpha(3)$ acts on
  the whole situation of Theorem~\ref{thm:RibetsectionMSV}, and freely
  on~$\H_{d,\alpha}$. The quotient $\Gamma_\alpha(3) \backslash 
  D_\alpha$ is then the Poincar\'e torsor $\calP$ for the abelian
  scheme $A:=\Gamma_\alpha(3)\backslash
  (P_\alpha^u(\Z)\rmM_{1,2d}(\R)U(\C)\backslash D_\alpha)$ over the
  pure Shimura variety $S:=\Gamma_{\alpha}(3)\backslash
  \H_{d,\alpha}$, with the image of the Ribet section~$r_f$ as a
  special subvariety of a family of semi-abelian varieties. As a
  generalisation of Lemma~\ref{lemma:tgeneric}, we will now prove that
  this special subvariety is not a torsion translate of a family of
  algebraic subgroups. Let $\tau$ be in $\H_{d,\alpha}$ and
  $x=(x_1,\ldots,x_{2d})$ be in $\rmM_{1,2d}(\R)$ such that
  $x_1,\ldots,x_{2d}, x\alpha_\Z x^t$ in~$\R$ are $\Q$-linearly
  independent. Then the coordinates of $\alpha_\Z{\cdot}x^t$ and
  $x\alpha_\Z x^t$ are $\Q$-linearly independent. By
  Proposition~\ref{prop_tors_prop_r_f_msv}, the subgroup of
  $E_{\tau,\ol{x}}$ generated by $r_f(\ol{x})$ is dense, for the
  Archimedean topology, in $(i\R/2\pi
  i\Z)\times(\rmM_{2d,1}(\R)/\rmM_{2d,1}(\Z))$. This shows that the
  union of the images of the $nr_f$, with $n\in\Z$, is dense, for the
  Archimedean topology, in a circle bundle of real codimension~1
  in~$\cal{P}$. The fibres of this circle bundle are the maximal
  compact subgroups of the corresponding complex analytic semi-abelian
  varieties.
\item The example just given (the image of~$r_f$) now supports Pink's
  Conjecture~1.3 of ~\cite{Pink2}: indeed, it is a subvariety $Y$ of
  $\calP$ containing a Zariski dense set of special points
  (i.e.~special subvarieties of maximal codimension in~$\calP$), and
  it is itself a special subvariety of~$\calP$. For further
  verifications in this context of~\cite{Pink2}, Conjecture~1.3,
  see~\cite{BMPZ} and~\cite{Bertrand18}. 
\item\label{rem_r_f_msv_item_4} Let us now clarify what is wrong in
  the proof of Theorem~6.3 of~\cite{Pink2}. The error is in the
  statement ``Since the special subvarieties of $A$ that dominate $S$
  are precisely the translates of semiabelian subschemes by torsion
  points,\ldots''; we have just seen that this is not true. Similarly,
  note the sentence ``Conversely, for any special subvariety $T\subset
  A$, every irreducible component of $T\cap A_s$ is a translate of a
  semiabelian subvariety of $A_s$ by a torsion point.'' in the proof
  of Theorem~5.7 of~\cite{Pink2}.

  The essential difference between the case of Kuga varieties (Shimura
  families of abelian varieties over pure Shimura varieties), where
  the statement is correct (\cite{Pink1}, Proposition~4.6), and the
  case of Shimura families of tori over Kuga varieties is as
  follows. In the first case the morphism of mixed Shimura varieties
  $A\to S$ is induced by a morphism of Shimura data $(P,D_P)\to
  (G,D_G)$ with $G$ reductive, and $P\to G$ surjective, split, with
  kernel $V$ a $\Q$-vector space. Then the special subvarieties $Z$ of
  $A$ that surject to~$S$ are given by morphisms of sub-Shimura data
  $(Q,D_Q)$ of $(P,D_P)$, with $Q\to G$ is surjective. Then $Q$ is an
  extension of $G$ by $Q\cap V$, a sub-$\Q$-vector space of~$V$. This
  extension is split because $\rmH^2(G,Q\cap V)=0$, and the splitting
  is unique up to conjugation by $Q\cap V$ because $\rmH^1(G,Q\cap
  V)=0$. So indeed such special subvarieties come from subfamilies
  $B\to S$ of $A\to S$ and Hecke correspondences that account for
  translations by torsion points. In the second case, say $T\to A$,
  these arguments no longer apply because the group $P$ in the Shimura
  datum for $A$ (such as $P_\alpha/U$ as above) is not necessarily
  reductive (and indeed the extension $P_\alpha$ of $P_\alpha/U$ by
  $U$ is not split).
\end{enumerate}
\end{Remark}

\section{The elliptic curve example, via generalised
  jacobians}\label{sec6}  

In this section we give a description of the example in
Section~\ref{sec2} in terms of the generalised jacobian of a family of
singular curves. Our reason to include it is that this description is
more elementary than the one using the Poincar\'e bundle, and that it
is more explicit in terms of divisors, rational functions, Weil
pairing, and is a nice application of Weil reciprocity.

We return to the situation as in Section~\ref{sec2}, except that now
we let $k$ be an arbitrary algebraically closed field. Let $E$ be an
elliptic curve over~$k$. Here we will view $E\times E$ as a family of
elliptic curves over $E$ via the 2nd projection $\pr_2\colon E_E =
E\times E\to E$, $(x,y)\mapsto y$.

In our construction, we will remove a finite number of points of the
base curve~$E$, and denote the complement by~$U$. This $U$ will be
shrunk a few times.

The diagonal morphism $\Delta\colon E\to E_E$, $x\mapsto(x,x)$, is a
section, and the group law of $E_E$ over $E$ gives us a second
section~$2\Delta$, $x\mapsto (2x,x)$.  The sections $\Delta$ and
$2\Delta$ are disjoint over the open subset $U:=E-\{0\}$.

We let $C\to U$ be the singular curve over $U$ obtained by identifying
the disjoint sections $2\Delta$ and $\Delta$. As a set, it is the
quotient of $E_U$ by the equivalence relation generated by
$(2x,x)\sim (x,x)$ with $x$ ranging over~$U$. The topology on $C$
is the finest one for which the quotient map $\quot\colon E_U\to C$
is continuous: a subset $V$ of $C$ is open if and only if
$\quot^{-1}V$ is open in~$E_U$. The regular functions on an open
set $V$ of $C$ are the regular functions $f$ on $\quot^{-1}V$ such
that $f(2x,x)=f(x,x)$ whenever $\quot(x,x)$ is in~$V$. It is
proved in Theorem~5.4 of~\cite{Ferrand} that this topological space with sheaf
of rings is indeed an algebraic variety over~$k$.  In the category of
varieties over~$k$, $\quot\colon E_U\to C$ is the co-equaliser of the pair of
morphisms $(2\Delta,\Delta)$ from $U$ to~$E_U$:
\[
\begin{tikzcd}
U \ar[r,shift left,"2\Delta"] \ar[r,shift right,"\Delta"']& E_U \ar[r,"\quot"]& C
\end{tikzcd}\,.
\]

The curve $C\to U$ is a family of singular curves, each with an
ordinary double point; it is semi-stable of genus~2
(see~\cite[9.2/6, 9.2/8]{BLR}). Its normalisation is
$\quot\colon E_U \to C$. Its generalised jacobian 
\[
G:=\Pic^0_{C/U}
\]
is described in \cite{BLR}, 8.1/4, 8.2/7, 9.2, 9.4/1, and
in more direct terms in this specific situation in~\cite{Howe}. As
$C\to U$ has a section (for example $\ol{\Delta}:=\quot\circ \Delta$),
we have, for every $T\to U$, that $G(T)$ is equal to $\Pic^0(C_T/T)/\Pic(T)$, where
$\Pic^0(C_T/T)$ is the group of isomorphism classes of invertible
$\calO$-modules on $C_T$ that have degree zero on the fibres
of $C_T\to T$. The group $\Pic(T)$ is contained as direct
summand in $\Pic^0(C_T/T)$ via pullback by the projection
$C_T\to T$ and a chosen section. In particular, a divisor $D$
on $C$ that is finite over~$U$, disjoint from $\ol{\Delta}(U)$ and of
degree zero after restriction to the fibres of $C\to U$ gives the
invertible $\calO_C$-module $\calO_C(D)$ that has degree zero on the
fibres and therefore gives an element denoted $[D]$ in~$G(U)$. An
alternative and very useful description, given in detail
in~\cite{Howe}, of $\Pic(C_T)$ is the set of isomorphism
classes of $(\calL,\sigma)$, with $\calL$ an invertible $\calO$-module
on $E_T$ and $\sigma\colon (2\Delta)^*\calL\to \Delta^*\calL$
an isomorphism of $\calO$-modules on~$T$, where an isomorphism from
$(\calL,\sigma)$ to $(\calL',\sigma')$ is an isomorphism
$f\colon\calL\to\calL'$ such that $(\Delta^*f)\circ\sigma =
\sigma'\circ (2\Delta)^*f$.

For $x$ in $U$, the fibre $G_x$ is, as abelian group, the
group~$\Pic^0(C_x)$. In terms of divisors this is the quotient of the
group $\Div^0(C_x)$ of degree zero divisors with support outside
$\{\ol{\Delta}(x)\}$ by the subgroup of principal divisors
$\divisor(f)$ for nonzero rational functions $f$ in $k(C_x)^\times$
that are regular and invertible at~$\ol{\Delta}(x)$. As
$C_x-\{\ol{\Delta}(x)\}$ is the same as $E-\{2x,x\}$, $\Div^0(C_x)$ is
the group of degree zero divisors on $E$ with support outside
$\{2x,x\}$. An element $f$ of $k(C_x)^\times$ that is regular at
$\ol{\Delta}(x)$ is an element of $k(E)^\times$ that is regular at
$2x$ and $x$ and satisfies $f(2x)=f(x)$. This gives us a useful
description of~$G_x$.

The normalisation map $\quot\colon E_U\to C$ induces a morphism of
group schemes over~$U$
\[
\pi\colon  G = \Pic^0_{C/U} \rightarrow \Pic^0_{E_U/U} =  E_U,
\]
and identifies $G$ with the extension of $E$ by $\Gm$ given by the
section $\Delta\in E_U(U)$. For $x$ in $U$ and $D \in
\Div^0(C_x)$, the class $[D]$ in $G_x$ lies in the kernel
$k^\times$ of $\pi_x$ if and only if there exists $f \in
k(E)^\times$ such that $D=\divisor(f)$ on $E$, and it
is then a torsion point in $k^\times$ if and only if the quotient
$f(2x)/f(x) \in k^\times$, which does not depend on the
choice of~$f$, is a root of unity.

We recall that for $u$ in $\End(E)$, the pullback map $u^*$ on
$\Div(E)$ induces $u^\vee$ in $\End(E^\vee)$, the dual of~$u$, and
then $\ol{u}:=\lambda^{-1} u^\vee\lambda$ in~$\End(E)$ is called the
Rosati-dual of~$u$, where $\lambda$ is the standard polarisation as in
Section~\ref{sec2}.  The map $\End(E)\to\End(E)$, $u\mapsto \ol{u}$ is
a anti-morphism of rings, in fact an involution. It is characterised
by the property that in $\End(E)$ we have
$\ol{u}u=\deg(u)=\deg(\ol{u})$ and $u+\ol{u}\in\Z$. Also, the
pushforward map $u_*$ on $\Div(E)$ induces an element still denoted
$u_*$ in $\End(E^\vee)$ such that $\lambda u=u_*\lambda$ in $\Hom(E,
E^\vee)$, and $u_*u^*=\deg(u)$ in~$\End(E^\vee)$. Hence $u_*$ and
$u^*$ are each other's Rosati duals. For $f$ a nonzero rational function
on $E$ and $u\neq 0$ we have $u^*\divisor(f)=\divisor(f\circ u)$, and
$u_*\divisor(f)=\divisor(\Norm_u(f))$, where $\Norm_u\colon
k(E)^\times\to k(E)^\times$ is the norm map along~$u$.

We will use Weil reciprocity: for $f$ and $g$ nonzero rational
functions on~$E$ such that $\divisor(f)$ and $\divisor(g)$ have
disjoint supports, one has $f(\divisor(g))=g(\divisor(f))$, where for
$D=\sum_PD(P){\cdot}P$ a divisor on~$E$ one defines
$f(D)=\prod_Pf(P)^{D(P)}$, cf.~\cite{Serre}, III, Proposition~7.  

We will also use the Weil pairing. For $n$ a positive integer and $P$
and $Q$ in $E[n]$ the element $e_n(P,Q)$ in $\mu_n(k)$ is defined as
follows. Let $D_P$ and $D_Q$ in $\Div^0(E)$ be disjoint divisors
representing $\lambda(P)$ and~$\lambda(Q)$. Let $f$ and $g$ be in
$k(E)^\times$ such that $nD_P=\divisor(f)$ and
$nD_Q=\divisor(g)$. Then $e_n(P,Q)=f(D_Q)/g(D_P)$. For $n$ invertible
in $k$ this pairing $e_n$ is a perfect alternating pairing, see
\cite{Husemoller}, Chapter~12, Remark~3.7.

We assume that $\phi$ is an endomorphism of $E$ such that
$\alpha:=\phi-\ol{\phi}\neq 0$. We set
\begin{equation}
  \label{eq:def_D}
D_\phi := \phi_*\big((\Delta)-(2\Delta)\big) - 
\phi^*\big((\Delta)-(2\Delta)\big) 
\quad\text{in $\Div^0(E_U)$}. 
\end{equation}
Note that $(\Delta)-(2\Delta)$ is linearly equivalent to
$(0)-(\Delta)$, and that, under $\lambda\colon E\to E^\vee$, $\Delta$
in $E(E)$ is mapped to $[(0)-(\Delta)]$.
We want the support of $D_\phi$ to be disjoint from $\Delta$ and
$2\Delta$, and this becomes true by removing from $U$ the kernels of
$2(\phi-1)$, of $2\phi-1$ and of $\phi-2$ (as $\ol{\phi}\neq\phi$,
only a finite set is removed). We can now also view $D_\phi$ as
element of $\Div^0(C)$, and we set:
\begin{equation}
\label{eq:def_sJ}
t^J_\phi := [D_\phi] \quad\text{in $G(U)$.}
\end{equation}
Combining Parts~2 and~4 of the following theorem
provides a new proof in the elliptic case of Proposition~\ref{prop:ribsectors},
while Part~3 sharpens Theorem~\ref{thm:ttorsionproperty}.
\begin{Theorem}\label{thm:tJ}
\begin{enumerate}
\item The image $\pi(t^J_\phi)$ of $t^J_\phi$ equals
\[
  (\alpha,\id_U)\colon U\to E\times U = E_U\,.
\]
\item Let $n$ be a positive integer and $x$ in $E$ with $nx=0$. 
Then $n^2t^J_\phi(x)=0$ in~$G_x$, and $nt^J_\phi(x) = e_n(\phi(x),x)$.
\item Let $n$ be a positive odd integer that is prime to
  $\deg(\alpha)$, invertible in~$k$, and that divides none among
  $\deg(2(\phi-1))$, $\deg(2\phi-1)$ and $\deg(\phi-2)$. Then there is
  an $x\in U$ of order~$n$, such that the order of $t^J_\phi(x)$ is
  equal to~$n^2$.
\item The extension $G$ of $E_U$ by ${\Gm}_U$ is uniquely isomorphic
  to the restriction to $E_U$ of the Poincar\'e torsor $\calP$ as in
  Section~\ref{sec2} (up to a switch of the factors of $E \times E$),
  and under this isomorphism, $t^J_\phi$ equals the Ribet
  section~$t_\phi$.
\end{enumerate}
\end{Theorem}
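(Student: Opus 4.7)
The plan is to prove the four parts in order, using Parts~1--3 to establish intrinsic properties of $t^J_\phi$ and only at the end identifying it with the Ribet section $t_\phi$ of Section~\ref{sec2}. For Part~1, I will compute the class of $D_{\phi,x}=\phi_*((x)-(2x))-\phi^*((x)-(2x))$ in $\Pic^0(E)\cong E$, using that $\phi_*$ and $\phi^*$ on $\Pic^0(E)$ correspond respectively to $\phi$ and to the Rosati-dual $\ol\phi$, combined with $[(x)-(2x)]=[(-x)-(0)]=\lambda(x)$. This yields $\pi(t^J_\phi(x))=\phi(x)-\ol\phi(x)=\alpha(x)$.

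For Part~2, the hypothesis $nx=0$ forces $n\alpha(x)=0$, so $nD_{\phi,x}$ is principal on $E$ and $nt^J_\phi(x)$ lies in $\ker\pi_x=k^\times$. I will choose $f_0\in k(E)^\times$ with $\divisor(f_0)=nD_x$, where $D_x:=(x)-(2x)$ represents $\lambda(x)$, and set $f:=\Norm_\phi(f_0)/(f_0\circ\phi)$, which has divisor $nD_{\phi,x}$. By the paper's description of $\ker\pi_x$, the element $nt^J_\phi(x)=f(2x)/f(x)$ unwinds via the definitions of $\Norm_\phi$ and composition into $f_0(\phi_*D_x)/f_0(\phi^*D_x)$. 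Since $\Norm_\phi(f_0)$ has divisor $n\phi_*D_x=nD_{\phi(x)}$ with $D_{\phi(x)}:=(\phi(x))-(2\phi(x))$ representing $\lambda(\phi(x))$, this matches the Weil-pairing formula $e_n(\phi(x),x)=\Norm_\phi(f_0)(D_x)/f_0(D_{\phi(x)})$, up to a sign governed by the conventions for $\lambda$ and $e_n$. Both assertions of Part~2 follow, the torsion statement $n^2 t^J_\phi(x)=0$ being immediate from $e_n(\phi(x),x)\in\mu_n$.

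For Part~3, I will use that $q(x):=e_n(\phi(x),x)$ is a quadratic form on $E[n]$ with associated bilinear form $b(x,y)=e_n(\alpha(x),y)$, by the Rosati adjointness $e_n(\phi P,Q)=e_n(P,\ol\phi Q)$ and the antisymmetry of $e_n$. The hypothesis $\gcd(n,\deg\alpha)=1$ makes $\alpha|_{E[n]}$ bijective, hence $b$ non-degenerate; the remaining hypotheses on $n$ guarantee that every $x\in E[n]$ of order $n$ lies in $U$. Standard theory of non-degenerate quadratic forms on $(\Z/n\Z)^2$ for $n$ odd (CRT-reducing to the case of $\F_p^2$) then shows that $q$ takes a primitive $n$-th root value at some $x$ of order $n$. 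For such $x$, $nt^J_\phi(x)$ has order $n$ in $k^\times$, and together with $n^2 t^J_\phi(x)=0$ this forces $t^J_\phi(x)$ to have order exactly $n^2$.

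For Part~4, both $G$ and $\calP|_{E_U}$ (after the switch of factors of $E\times E$) are extensions of $E_U$ by $\Gm$ over $U$ whose classifying map $U\to E^\vee$ sends $x$ to $\lambda(x)$, since gluing $2x$ to $x$ realises the extension of class $[(2x)-(x)]=\lambda(x)$; rigidification along the zero section then pins down a unique isomorphism. Under it, $t^J_\phi$ and $t_\phi$ are both sections over $U$ projecting to $\alpha$, and so differ by a morphism $c\colon U\to\Gm$. Part~2 together with Proposition~\ref{prop:ribsectors} gives $c(x)^n=1$ for every $x\in E[n]\cap U$; a counting argument (the zero set of $c^n-1$ on $E$ contains at most $n\deg(c)$ points, while $|E[n]\cap U|\geq n^2-|E\setminus U|$) forces $c$ to be constant for $n$ large, whence $c=1$ by evaluation at any torsion point. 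The main obstacle is the sign/convention bookkeeping in Part~2: Weil reciprocity applied to $f$ and $f_0$ directly yields only $(nt^J_\phi(x))^{2n}=1$, and upgrading to the sharp identity $nt^J_\phi(x)=e_n(\phi(x),x)$ requires tracking signs simultaneously through $\lambda$, $\Norm_\phi$, $\phi^*$, and the choice of divisor representatives in the Weil pairing.
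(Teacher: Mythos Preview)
Parts~1--3 of your argument are essentially the paper's. In Part~2 you go straight to the Weil-pairing identity and deduce $n^2 t^J_\phi(x)=0$ from it, whereas the paper first proves the torsion statement independently by applying Weil reciprocity \emph{twice} (once to $(g_\phi,f)$ and once to $(f,f\circ\phi)$, giving $(nt^J_\phi(x))^n=f(\phi_*\divisor f)/f(\phi_*\divisor f)=1$), and only afterwards derives the identity $nt^J_\phi(x)=e_n(\phi(x),x)$ by the same direct unwinding you sketch. Your closing worry is therefore misplaced: your computation $nt^J_\phi(x)=f_0(\phi_* D_x)/f_0(\phi^* D_x)$ uses only the elementary identities $\Norm_\phi(f_0)(P)=f_0(\phi^*P)$ and $(f_0\circ\phi)(P)=f_0(\phi_*P)$, not Weil reciprocity, and matches $e_n(\phi(x),x)^{\pm1}$ on the nose once the convention for reading off the class in $k^\times$ (namely $g(x)/g(2x)$ versus $g(2x)/g(x)$) is fixed. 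Part~3 is the same idea as the paper's, which also reduces via CRT to primes~$p$, uses non-degeneracy of $(x,y)\mapsto e_p(\alpha x,y)$ to find $x_p$ with $e_p(\alpha x_p,x_p)$ primitive, and then observes $e_p(\alpha x_p,x_p)=e_p(\phi x_p,x_p)^2$ together with $p$ odd.

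Part~4 is where you genuinely diverge. The paper shows that $t^J_\phi$ \emph{extends across}~$0$: using the description of $\Pic(C_T)$ by pairs $(\calL,\sigma)$, it checks that $D_{\phi,0}=0$ and that $\Delta^*D_\phi-(2\Delta)^*D_\phi=0$ as divisors on~$E$ (a short bookkeeping with the kernels of $\phi-1$, $2\phi-1$, $\phi-2$, $2(\phi-1)$). Then $t^J_\phi$ and $t_\phi$ are sections of the same $\Gm$-torsor over all of~$E$, agreeing at~$0$, hence equal. Your counting argument ($|c^{-1}(\mu_n)|\le n\deg c$ against $|E[n]\cap U|\ge n^2-|E\setminus U|$ for $n$ invertible in~$k$) is valid and avoids that divisor check, but it imports Proposition~\ref{prop:ribsectors} from Section~\ref{sec3}, so the proof is no longer self-contained within the generalised-jacobian framework. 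The paper's route keeps Section~\ref{sec6} independent; yours trades that for brevity. One small point: once $c$ is constant, ``evaluation at any torsion point'' only gives $c^n=1$; you need two coprime such~$n$ to conclude $c=1$.
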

\begin{proof}
  We prove part~1.  The image $\pi(t^J_\phi)$ in $E_U(U)$ of
  $t^J_\phi$ is the class of the divisor $D_\phi$ on~$E_U$, hence we
  have, denoting by $\simeq$ linear equivalence on $\Div^0(E_U)$:
\begin{align*}
D_\phi & \simeq \phi_*\big((\Delta)-(2\Delta)\big) - \ol{\phi}_*\big((\Delta)-(2\Delta)\big)  \\
& =  \big((\phi(\Delta))-(2\phi(\Delta))\big) - \big((\ol{\phi}(\Delta))-(2\ol{\phi}(\Delta))\big) \\
& \simeq \big( (0) - (\phi(\Delta))\big) - \big((0) - (\ol{\phi}(\Delta))\big) \\
& \simeq \big((0) - ((\phi-\ol{\phi})(\Delta))\big) = \big((0) - (\alpha(\Delta))\big).
\end{align*}
Under the principal polarisation $\lambda\colon E\to E^\vee$,
$x\mapsto [(0)-(x)]$, this corresponds to $\alpha(\Delta)$ in~$E(U)$.
This proof of part~1 is finished.

We prove part~2. So, let $n$ be a positive integer, and let $x\in U$
be a point such that $nx=0$ in~$E$. As $nx=0$, we have $n\pi
t^J_\phi(x)=n \alpha(x)=0$ in~$E$. This means that $nD_{\phi,x}$ is a
principal divisor on~$E$. Let $f\in k(E)^\times$ be such that
$\divisor(f)=n(x)-n(2x)$ in $\Div(E)$. Then we have, on~$E$:
\begin{align*}
\divisor(f\circ \phi) & = \phi^*\divisor(f) = \phi^*\left(n(x)-n(2x)\right),\\
\divisor(\Norm_\phi(f)) & = \phi_*\divisor(f) = \phi_*\left(n(x)-n(2x)\right).
\end{align*}
We define:
\[
g_\phi:=\Norm_\phi(f)/(f\circ \phi)\quad\text{in $k(E)^\times$.}
\]
Then we have:
\[
nD_{\phi,x} = \divisor(\Norm_\phi(f)) - \divisor(f\circ \phi) = \divisor(g_\phi) 
\quad\text{on~$E$.} 
\]
This means that $nt^J_\phi(x)$ in $G_x$ is the element $g_\phi(x)/g_\phi(2x)$ of~$k^\times$. By the
construction of~$U$, the divisor of $f$ has support disjoint from that of $g_\phi$ and of
$\phi^*\divisor(f)$ and $\phi_*\divisor(f)$, and Weil reciprocity gives us:
\begin{align*}
\left(\frac{g_\phi(x)}{g_\phi(2x)}\right)^n & = g_\phi(\divisor(f)) =
f(\divisor(g_\phi)) = f(\divisor(\Norm_\phi(f)) - \divisor(f\circ \phi))\\
& = \frac{f(\divisor(\Norm_\phi(f)))}{f(\divisor(f\circ \phi))} = 
\frac{f(\phi_*\divisor(f))}{(f\circ \phi)(\divisor(f))} = 
\frac{f(\phi_*\divisor(f))}{f(\phi_*\divisor(f))} = 1.
\end{align*}
So, indeed $n^2 t^J_\phi(x)=0$ in~$G_x$. Let us also
prove the equality  $nt^J_\phi(x) = e_n(\phi(x),x)$. We have
\[
\text{$\lambda(x) = [(x)-(2x)]$ in $E^\vee$}\,,\quad 
\text{$n((x)-(2x)) =\divisor(f)$ in $\Div(E)$,}
\]
and 
\[
  \lambda(\phi(x)) = [\phi_*(x)-\phi_*(2x)] \quad\text{in $E^\vee$,}
\]
and
\[
n(\phi_*(x)-\phi_*(2x)) = \divisor(\Norm_\phi(f)) \quad \text{in $\Div(E)$}\,.
\]
So, by the description above of the Weil pairing, 
\begin{align*}
e_n(\phi(x),x) & = \frac{(\Norm_\phi(f))((x) - (2x))}{f(\phi_*((x) - (2x)))} = g_\phi((x) - (2x)) \\
& = \frac{g_\phi(x)}{g_\phi(2x)} = nt^J_\phi \quad\text{in $k^\times$.}
\end{align*}

We prove part~3. Let $n$ be a positive odd integer that is prime to
$\deg(\alpha)$, invertible in~$k$, and that divides none among
$\deg(2(\phi-1))$, $\deg(2\phi-1)$ and $\deg(\phi-2)$.  To prove that
there is a $x$ in $U$ such that the order of $x$ is $n$ and the order
of $t^J_\phi(x)$ is~$n^2$, it is sufficient to show that there is an
$x$ in $U$ of order~$n$ such that $e_n(\phi(x),x)$ is of order~$n$. As
$n$ does not divide $\deg(2(\phi-1))$, $\deg(2\phi-1)$, and
$\deg(\phi-2)$, each $x$ in $E$ of order $n$ is in~$U$.

Let now $p$ be a prime number dividing~$n$. Then $p$ is odd, and $p$
is invertible in~$k$, hence $E[p]$ is of dimension two as
$\F_p$-vector space, with the symmetric bilinear form
\[
E[p]\times E[p] \lto k^\times,\quad (x,y)\mapsto e_p(\alpha(x),y).
\]
As $p$ does not divide $\deg(\alpha)$, this form is
perfect. Therefore, there is an $x_p$ in $E[p]$ such that
$e_p(\alpha(x_p),x_p)$ is of order~$p$. Then $e_p(\phi(x_p),x_p)$ is
also of order~$p$, as $e_p(\alpha(x_p),x_p)=e_p(\phi(x_p),x_p)^2$. Let
$n_p$ be the exponent of $p$ in the factorisation of~$n$, and
$x'_p\in E$ such that $x_p=p^{n_p-1}x'_p$, then $x'_p$ is in $E[n]$,
and the order of $e_n(\phi(x'_p),x'_p)$ is~$p^{n_p}$.

Taking for $x$ the sum of the $x'_p$ for $p$ dividing~$n$ gives an $x$
as desired. We have now finished the proof of part~3.

We prove part~4. The two families of extensions of $E$ by $\Gm$ are
fibrewise isomorphic by construction, hence there is a unique
isomorphism of extensions between them as $\Hom(E,\Gm)$ is
trivial. The sections $t_\phi$ and $t^J_\phi$ lie above the graph of
$\alpha\colon E\to E$. We will show that $t^J_\phi$ extends from $U$
to $E$, and that $t_\phi(0)=t^J_\phi(0)$. Then there is a unique $c\in
k^\times$ such that $t^J_\phi=ct_\phi$, and the $c$ equals $1$ because
of the values at~$0$.

We show that $t^J_\phi$ extends from $U$ to $E$ by viewing as
explained above, for $T\to U$, $\Pic(C_T)$ as the group of isomorphism
classes of $(\calL,\sigma)$, with $\calL$ an invertible $\calO$-module
on $E_T$ and $\sigma\colon \Delta^*\calL\to (2\Delta)^*\calL$ an
isomorphism of $\calO$-modules on~$T$. This description extends as
such to all $T\to E$, hence gives us an extension over all of~$E$ of
the extension $G$ of $E_U$ by~${\Gm}_U$. Now we show that
$t^J_\phi$ extends over~$E$. It suffices to take $T=E$, and show that
the divisor $\Delta^*(D_\phi)-(2\Delta)^*(D_\phi)$ on $E$ is principal,
and that the restriction $D_{\phi,0}$ of $D_\phi$ to $E\times\{0\}$ is
principal. 

Definition~(\ref{eq:def_D}) shows that $D_{\phi,0}$ is zero, as
divisor on~$E$. We claim that also
$\Delta^*(D_\phi)-(2\Delta)^*(D_\phi)$ is zero, as divisor on~$E$. We
give the computation. Let $R$ be any $k$-algebra. Then
\begin{align*}
(\Delta^*(\phi_*(\Delta)))(R) & = \{x\in E(R) : \phi(x)=x\}\\
(\Delta^*(\phi_*(2\Delta)))(R) & = \{x\in E(R) : 2\phi(x)=x\}\\
(\Delta^*(\phi^*(\Delta)))(R) & = \{x\in E(R) : \phi(x)=x\}\\
(\Delta^*(\phi^*(2\Delta)))(R) & = \{x\in E(R) : \phi(x)=2x\}\,.
\end{align*}
and 
\begin{align*}
((2\Delta)^*(\phi_*(\Delta)))(R) & = \{x\in E(R) : \phi(x)=2x\}\\
((2\Delta)^*(\phi_*(2\Delta)))(R) & = \{x\in E(R) : 2\phi(x)=2x\}\\
((2\Delta)^*(\phi^*(\Delta)))(R) & = \{x\in E(R) : 2\phi(x)=x\}\\
((2\Delta)^*(\phi^*(2\Delta)))(R) & = \{x\in E(R) : 2\phi(x)=2x\}\,.
\end{align*}
A little bit of bookkeeping shows that the balance is zero.
\end{proof}

\paragraph{Acknowledgements}
\addcontentsline{toc}{section}{Acknowledgements}
We thank Robin de Jong for remarks, corrections and suggestions.  We
also thank the referees of the paper for their comments and
suggestions to improve our text.


\begin{thebibliography}{99}

\bibitem{Bertrand11} D.\ Bertrand, \emph{Special points and Poincar\'e
    bi-extensions,} with an Appendix by Bas Edixhoven. 
\url{https://arxiv.org/abs/1104.5178}
  
\bibitem{Bertrand13a} D.\ Bertrand, \emph{Extensions panach\'ees
    autoduales,} J.\ K-theory, 11, 2013, 393--411. 

\bibitem{BMPZ} D.\ Bertrand, D.\ Masser, A.\ Pillay, U.\ Zannier,
  \emph{Relative Manin-Mumford for semi-abelian surfaces,} Proc.\
  Edin.\ MS~59, 2016, 837--875.\\
\url{https://doi.org/10.1017/S0013091515000486}
 
\bibitem{Bertrand18} D.\ Bertrand, H.\ Schmidt, \emph{Unlikely
    intersections in semi-abelian surfaces.} Algebra and Number
  Theory, vol.~13 (2019), 1455--1473.\\
  \url{https://doi.org/10.2140/ant.2019.13.1455} 
  
\bibitem{BLR} S.\ Bosch, W.\ L\"utkebohmert, M.\ Raynaud: {\it N\'eron models}, 
Ergebnisse der Mathematik und ihrer Grenzgebiete~(3),~21. Springer-Verlag, Berlin, 1990.

\bibitem{Breen} L.\ Breen, \emph{Biextensions altern\'ees,} Compositio
  Math.~63 (1987), 99--122.

\bibitem{Chambert-Loir99} A. Chambert-Loir, \emph{G\'eom\'etrie d'Arakelov
  et hauteurs canoniques sur des vari\'et\'es semi-ab\'eliennes,}
Math.\ Ann.~314, 1999, 381--401. 

\bibitem{Deligne-VS} P.\ Deligne, \emph{Vari\'et\'es de Shimura:
    interpr\'etation modulaire, et techniques de construction de mod\`eles
    canoniques.} Automorphic forms, representations and L-functions
  (Proc.\ Sympos.\ Pure Math., Oregon State Univ., Corvallis, Ore.,
  1977), Part~2, pp.~247--289, Proc.\ Sympos.\ Pure Math., XXXIII,
  Amer.\ Math.\ Soc., Providence, R.I., 1979.

\bibitem{HodgeII} P.\ Deligne, \emph{Th\'eorie de Hodge. II.}
  Inst.\ Hautes Etudes Sci.\ Publ.\ Math.\ No.~40 (1971), 5--57. 

\bibitem{HodgeIII} P.\ Deligne, \emph{Th\'eorie de Hodge III,} Inst.\
  Hautes Etudes Sci.\ Publ.\ Math.\ No.~44 (1974), 5--77. 

\bibitem{Faltings-Chai} G.\ Faltings and Ching-Li Chai, 
\emph{Degeneration of abelian varieties.} 
With an appendix by David Mumford. Ergebnisse der Mathematik und ihrer
Grenzgebiete~(3),~22. Springer-Verlag, Berlin, 1990. xii+316~pp. ISBN:
3-540-52015-5.

\bibitem{Ferrand} D.\ Ferrand, \emph{Conducteur, descente et
    pincement}, Bull.\ Soc.\ Math.\ France~131 (2003), no.~4,
  553--585. 

\bibitem{Gao-Pisa} Z.\ Gao, \emph{A special point problem of
    Andr\'e-Pink-Zannier in the universal family of Abelian
    varieties.} Ann.\ Sc.\ Norm.\ Super.\ Pisa Cl.\ Sci.~17 (2017),
  231--266. 

\bibitem{Howe} S. Howe: Higher genus counterexamples to relative
  Manin-Mumford; ALGANT Master Thesis, July 2012. Available at\\
  \url{https://www.universiteitleiden.nl/binaries/content/assets/science/mi/scripties/howemaster.pdf}

  

\bibitem{Husemoller} D.\ Husemoller: {\it Elliptic curves}. With an
  appendix by Ruth Lawrence. Graduate Texts in Mathematics,
  111. Springer-Verlag, New York, 1987.

\bibitem{Jacquinot-Ribet} O.\ Jacquinot, K.\ Ribet, \emph{Deficient points on
  extensions of abelian varieties by~$\Gm$,}; J.\ Number Th., 25,
  1987, 133--151.
  
\bibitem{Klingler} B.\ Klingler, \emph{Hodge loci and atypical
    intersections: conjectures}, \\
\url{https://arxiv.org/abs/1711.09387}

\bibitem{KUY} B.\ Klingler, E.\ Ullmo and A.\ Yafaev,
  \emph{Bi-algebraic geometry and the Andr\'e--Oort conjecture},
  Algebraic geometry: Salt Lake City 2015, 319--359, Proc.\ Sympos.\
  Pure Math., 97.2, Amer.\ Math.\ Soc., Providence, RI, 2018.\\ 
\url{https://webusers.imj-prg.fr/~bruno.klingler/papiers/Survey2.pdf}

\bibitem{Lopuhaa} M.A.\ Lopuha\"a, \emph{Pink's conjecture on semi-abelian
  varieties,} MSc. thesis, September 2014, Leiden.\\
  \url{https://www.universiteitleiden.nl/binaries/content/assets/science/mi/scripties/lopuhaamaster.pdf}

\bibitem{Moonen-L1} B.\ Moonen, \emph{Linearity properties of Shimura
    varieties.~I.} J.\ Algebraic Geom.~7 (1998), no.~3, 539--567.

\bibitem{Moret-Bailly} L.\ Moret-Bailly, \emph{Pinceaux de
    vari\'et\'es ab\'eliennes.} Ast\'erisque No.~129 (1985). 
  
\bibitem{MumfordAV} D.\ Mumford, \emph{Abelian varieties.} With
  appendices by C.P.\ Ramanujam and Yuri Manin. Corrected reprint of
  the second (1974) edition. Tata Institute of Fundamental Research
  Studies in Mathematics,~5. Published for the Tata Institute of
  Fundamental Research, Bombay; by Hindustan Book Agency, New Delhi, 
  2008. xii+263 pp. ISBN: 978-81-85931-86-9; 81-85931-86-0. 

\bibitem{Pink_thesis} R.\ Pink, \emph{Arithmetical compactification of
    mixed Shimura varieties.} Dissertation, Rheinische
  Friedrich-Wilhelms-Universit\"at Bonn, Bonn, 1989. Bonner
  Mathematische Schriften, 209. Universit\"at Bonn, Mathematisches
  Institut, Bonn, 1990.\\
\url{https://people.math.ethz.ch/~pink/dissertation.html}

\bibitem{Pink1} R.\ Pink, \emph{A combination of the conjectures by
    Mordell-Lang and Andr\'e-Oort}, In ``Geometric methods in algebra
  and number theory,'' 251--282, Progr.\ Math.,~235, Birkh\"auser
  Boston, Boston, MA, 2005.

\bibitem{Pink2} R.\ Pink,  \emph{A common generalization of the
    conjectures of Andr\'e--Oort, Manin-Mumford, and Mordell-Lang},
  preprint, April 2005,\\ 
\url{https://people.math.ethz.ch/~pink/ftp/AOMMML.pdf}.

\bibitem{Ribet} K.\ Ribet, \emph{Cohomological realization of a family of
  one-motives,}  J.\ Number Th.~25, 1987, 152--161. 

\bibitem{Serre} J-P. Serre, \emph{Groupes alg\'ebriques et corps de classes},
Publications de l'institut de math\'e\-matique de l'universit\'e de
Nancago,~VII. Hermann, Paris 1959. 

\bibitem{SGA3.II} SGA~3.II. Sch\'emas en groupes.~II: Groupes de type
  multiplicatif, et structure des sch\'emas en groupes g\'en\'eraux.
  S\'eminaire de G\'eom\'etrie Alg\'ebrique du Bois Marie
  1962/64. Dirig\'e par M.\ Demazure et A.\ Grothendieck. Lecture
  Notes in Mathematics, Vol.~152 Springer-Verlag, Berlin-New York
  1970.

\bibitem{Tsimerman} J.\ Tsimerman, \emph{The Andr\'e-Oort conjecture
    for~${\mathcal A}_g$,} Ann. of Math.~(2) 187 (2018), 379--390.

\bibitem{Zannier} U.\ Zannier, \emph{Some problems of unlikely
    intersections in arithmetic and geometry,} with appendixes by
  David Masser. Annals of Mathematics Studies, 181. Princeton
  University Press, Princeton, NJ, 2012.

\end{thebibliography}
\end{document}